\font\de=cmssi12
\numberwithin{equation}{section}
\newcommand{\Per}{{\rm Per}}
\newcommand{\ep} {\varepsilon} \newcommand{\eps} {\varepsilon}
\newcommand{\s} {\sigma}
\newcommand{\ti}{\pitchfork}
\newcommand{\PW}{W}
\def \ZZ {{\mathbb Z}}
\def \RR {{\mathbb R}}
\newcommand{\PB}{\cR_{\eps,l}}
\def \cL {{\mathcal L}}
\def \cR {{\mathcal R}}
\def \cS {{\mathcal S}}
\def \cT {{\mathcal T}}
\def \cU {{\mathcal U}}
\def \cW {{\mathcal W}}
\newcommand \W {\cW}
\newtheorem{maintheorema}{Theorem}
\newtheorem{theorem}{Theorem}[section]
\newtheorem{conjecture}[theorem]{Conjecture}
\newtheorem{corollary}[theorem]{Corollary}
\newtheorem{proposition}[theorem]{Proposition}
\newtheorem{lemma}[theorem]{Lemma}
\newtheorem{definition}[theorem]{Definition}
\theoremstyle{remark}
\newtheorem{remark}[theorem]{Remark}
\begin{document}


\author{F. Rodriguez Hertz}
\address{IMERL-Facultad de Ingenier\'\i a\\ Universidad de la
Rep\'ublica\\ CC 30 Montevideo, Uruguay.}
\email{frhertz@fing.edu.uy}\urladdr{http://www.fing.edu.uy/$\sim$frhertz}

\author{M. A. Rodriguez Hertz}
\address{IMERL-Facultad de Ingenier\'\i a\\ Universidad de la
Rep\'ublica\\ CC 30 Montevideo, Uruguay.}
\email{jana@fing.edu.uy}\urladdr{http://www.fing.edu.uy/$\sim$jana}

\author{A. Tahzibi}
\address{Departamento de Matem\'atica,
  ICMC-USP S\~{a}o Carlos, Caixa Postal 668, 13560-970 S\~{a}o
  Carlos-SP, Brazil.}
\email{tahzibi@icmc.usp.br}\urladdr{http://www.icmc.usp.br/$\sim$tahzibi}

\author{R. Ures}
\address{IMERL-Facultad de Ingenier\'\i a\\ Universidad de la
Rep\'ublica\\ CC 30 Montevideo, Uruguay.} \email{ures@fing.edu.uy}
\urladdr{http://www.fing.edu.uy/$\sim$ures}

\thanks{This work was done in Universidad de la Rep\'{u}blica- Uruguay and ICMC-USP, S\~{a}o Carlos, Brazil.
 AT thanks CNPq and FAPESP for financial support and Universidad de la Rep\'{u}blica for warm hospitality and  financial support. FRH, MRH and RU acknowledge warm hospitality of ICMC-USP
 and financial support of CNPq and FAPESP}

\keywords{}

\subjclass{Primary: 37D25. Secondary: 37C40, 37D30.}

\renewcommand{\subjclassname}{\textup{2000} Mathematics Subject Classification}

\date{\today}

\title{New criteria for ergodicity and non-uniform hyperbolicity}

\begin{abstract}
In this work we obtain a new criterion to establish ergodicity and non-uniform hyperbolicity of smooth measures of
diffeomorphisms. This method allows us to give a more accurate description of certain ergodic components. The use of this criterion in combination with topological devices such as blenders lets us obtain global ergodicity and abundance of non-zero Lyapunov exponents in some contexts.
\par
In the partial hyperbolicity context, we obtain that stably ergodic diffeomorphisms are $C^1$ dense among volume preserving partially hyperbolic diffeomorphisms with two-dimensional center bundle. This is motivated by a well-known conjecture of C. Pugh and M. Shub.
\end{abstract}

\maketitle

\section{Introduction}
Our work is concerned with the ergodic properties of smooth invariant measures for diffeomorphisms under some hypotheses about their Lyapunov exponents. \par
Among the few techniques that are used to establish ergodicity of a smooth measure, we can mention the Hopf argument, which we shall discuss in sections \S \ref{ergodic.homoclinic.classes} and \S \ref{section.criterion}, harmonic analysis (see for instance \cite{katokhasselblatt},\cite{manhe}), and case by case studies, such as in the Anosov-Katok examples \cite{anosovkatok}.\par
Along the line of the Hopf argument, there is the local ergodicity criterion, which consists in showing that there is an open set contained $(\hspace*{-.6em}\mod 0)$ in an er\-go\-dic component. Here we present criteria which can be seen as complementary to local ergodicity.
\subsection{Ergodic homoclinic classes}\label{ergodic.homoclinic.classes}
E. Hopf proved that the geodesic flow of a surface of negative curvature is ergodic by showing that any measurable set that is invariant under the geodesic flow is also invariant by the stable and unstable distributions of the flow \cite{hopf1939}. This technique, later known as the {\de Hopf argument}, was also used by D. Anosov \cite{anosov}, D. Anosov and Ya. Sinai \cite{Anosov-Sinai} to prove ergodicity of Anosov systems. It was further used by Ya. Pesin \cite{pesin1977} in the context of non-zero Lyapunov exponents, to prove that a hyperbolic measure has only countably many ergodic components (see Theorem \ref{pesin.decomposition} below). The Hopf argument has since then become a standard argument to prove ergodicity.\par
Here we present a refinement of the Hopf argument which provides a more accurate description of some ergodic components of a smooth invariant measure. Indeed, we introduce the concept of ergodic homoclinic classes associated to a hyperbolic point (see below). Under mild hypotheses, these sets turn out to be hyperbolic ergodic components. Moreover, in the context of Pesin's Ergodic Component Theorem (Theorem \ref{pesin.decomposition}), these ergodic homoclinic classes depict all ergodic components and increase the resemblance between Pesin's Theorem and Smale's Spectral Decomposition Theorem (see below).\par
However, our new criterion, opposite to Pesin's Theorem, does not require absence of zero Lyapunov exponents a priori, but only requires that certain sets associated to a hyperbolic periodic point be of positive measure. Ergodicity and non-uniform hyperbolicity will follow as a consequence.\par
Let $f:M\to M$ be a $C^{1+\alpha}$ diffeomorphism of a closed Riemannian manifold $M$. Given a hyperbolic periodic point $p$, let us define the {\de ergodic homoclinic class} of $p$, $\Lambda(p)$, as the set of regular points $x\in M$ such that\par
\begin{figure}[h]
\hspace*{-1cm}\begin{minipage}{7cm}
\includegraphics[bb=91 264 543 599, width=6cm]{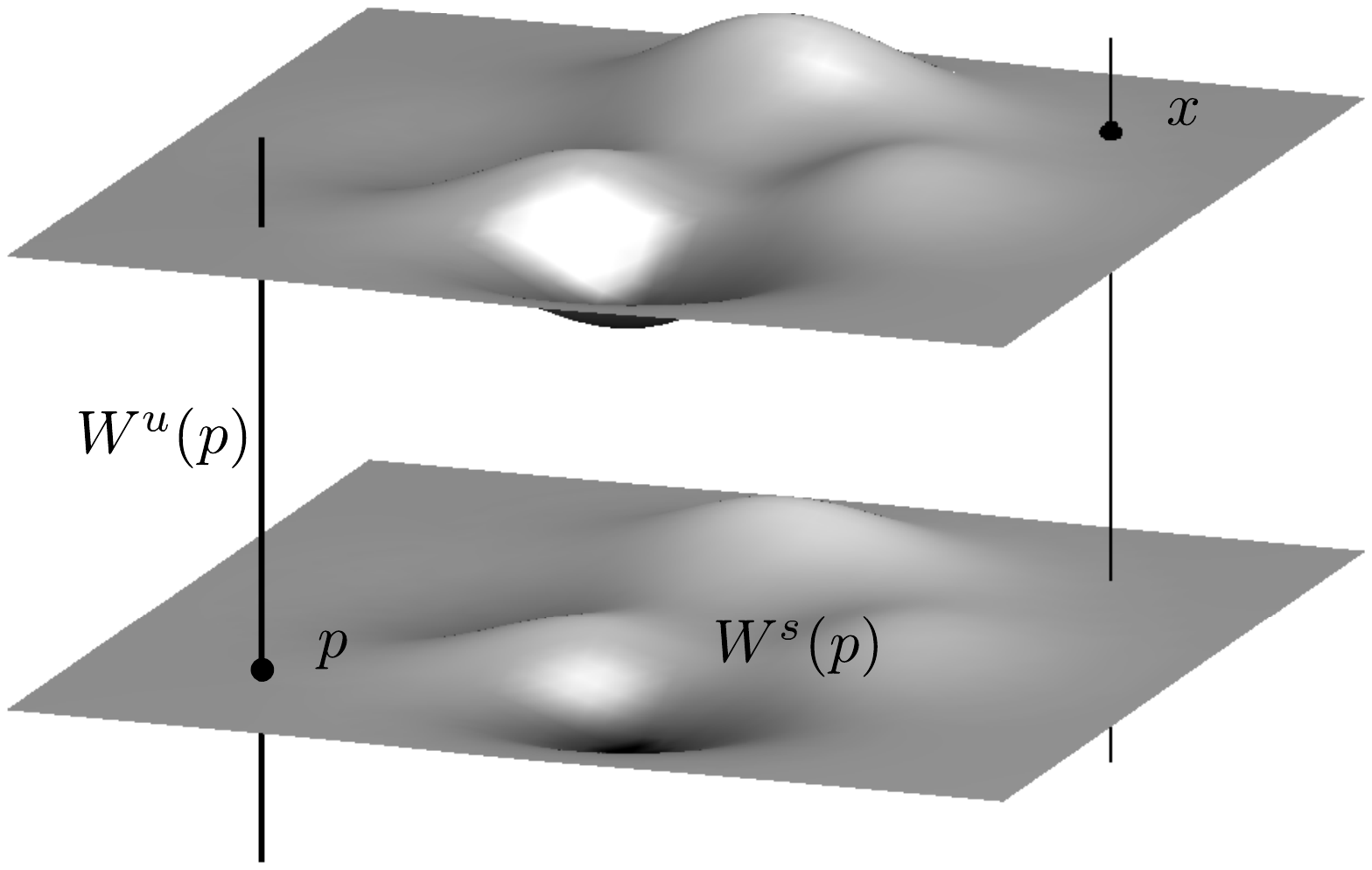}
\end{minipage}\hspace*{1cm}
\begin{minipage}{5cm}
\begin{eqnarray}\label{Bup}W^s(o(p))\pitchfork W^u(x)\ne\emptyset&\quad&\\
\label{Bsp}W^u(o(p))\pitchfork W^s(x)\ne\emptyset&\quad&
\end{eqnarray}
\end{minipage}
\caption{\label{pesin.homoclinic.class} $x\in\Lambda(p)$, the ergodic homoclinic class of $p$}
\end{figure}
Here $W^s(x)$ is the {\de stable Pesin manifold} of $x$, that is,
$$W^s(x)=\left\{y\in M: \lim\sup_{n\to +\infty}\frac{1}{n}\log d(f^n(x),f^n(y))<0\right\}$$
and $W^u(x)$ is the {\de unstable Pesin manifold} of $x$, that is, the stable Pesin manifold of $x$ for $f^{-1}$.
For almost every point, stable and unstable Pesin manifolds are immersed manifolds, see \S\ref{subsection.pesin} and references therein. $W^s(o(p))$ is the stable Pesin manifold of the orbit of $p$. The ergodic homoclinic classes are invariant sets. In fact, if $m(\Lambda(p))>0$, then $\Lambda(p)$ is an ergodic component, see Theorem \ref{main.theorem} and \S \ref{section.criterion}.\par
Now note that we can write an ergodic homoclinic class as the intersection of two invariant sets:
$$\Lambda(p)=\Lambda^s(p)\cap \Lambda^u(p)$$
where $\Lambda^u(p)$ is the set of regular $x$ satisfying relation (\ref{Bup}),
 and $\Lambda^s(p)$ is the set of regular $x$ satisfying relation (\ref{Bsp}).
  $\Lambda^s(p)$ is $s$-saturated, and $\Lambda^u(p)$ is $u$-saturated.\par
We prove the following result:
\begin{maintheorema}\label{main.theorem}
Let $f:M\to M$ be a $C^{1+\alpha}$ diffeomorphism over a Riemannian manifold $M$, and let $m$ be a smooth invariant measure over $M$. If $m(\Lambda^s(p))>0$ and $m(\Lambda^u(p))>0$ for some hyperbolic periodic point, then $$\Lambda(p)\circeq \Lambda^s(p)\circeq \Lambda^u(p)$$ and $f$ is ergodic on $\Lambda(p)$. Moreover, $f$ is non-uniformly hyperbolic on $\Lambda(p)$.
\end{maintheorema}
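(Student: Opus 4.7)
My plan is to structure the proof in three stages: a dimension count establishing non-uniform hyperbolicity on $\Lambda(p)$; a Hopf-type argument anchored at $o(p)$ via Pesin's absolute continuity, giving ergodicity of $f|_{\Lambda(p)}$; and a saturation argument identifying $\Lambda^s(p)$, $\Lambda^u(p)$, and $\Lambda(p)$ modulo zero. A preliminary step derives $m(\Lambda(p))>0$ from the two hypotheses $m(\Lambda^u(p))>0$ and $m(\Lambda^s(p))>0$.

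\medskip

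\emph{Non-uniform hyperbolicity.} Let $k=\dim W^s(o(p))$ and $\ell=\dim W^u(o(p))$, so $k+\ell=\dim M$. The transversality defining $\Lambda^u(p)$ forces $\dim W^u(x)\ge\ell$ there, and dually $\dim W^s(x)\ge k$ on $\Lambda^s(p)$. On $\Lambda(p)$ both hold; since they already sum to $\dim M$, the Oseledec decomposition of any $x\in\Lambda(p)$ has no zero exponents and its stable/unstable index matches that of $p$. Hence $m|_{\Lambda(p)}$ is hyperbolic as soon as $m(\Lambda(p))>0$, and Pesin's theorem becomes available.

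\medskip

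\emph{Positive measure, ergodicity, and saturation.} The hypotheses $m(\Lambda^u(p))>0$ and $m(\Lambda^s(p))>0$ combine inside a suitable Pesin block $\Delta$, via Pesin's absolute continuity and the local product structure on $\Delta$, to give $m(\Lambda^u(p)\cap\Lambda^s(p)\cap\Delta)>0$, hence $m(\Lambda(p))>0$: in such a block the $u$-saturated $\Lambda^u(p)\cap\Delta$ and the $s$-saturated $\Lambda^s(p)\cap\Delta$ have transverse slices of positive measure that multiply out to a positive-measure piece of their intersection. On a relatively open positive-measure subset $\Delta^{*}\subset\Delta$ where transversality is $C^1$-open, we have continuous selectors $y\mapsto z(y)\in W^u_{\mathrm{loc}}(y)\cap W^s(o(p))$ and $y\mapsto w(y)\in W^s_{\mathrm{loc}}(y)\cap W^u(o(p))$. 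A Hopf-type chain then connects $m$-a.e.\ pair of points of $\Delta^{*}$ via passage through neighbourhoods of the anchor manifolds $W^s(o(p))$ and $W^u(o(p))$; absolute continuity of the Pesin laminations within $\Delta^{*}$ lets the chain be constructed without demanding that individual passage points on the anchor manifolds be themselves Birkhoff regular. Exhausting $\Lambda(p)$ by countably many such blocks and invoking $f$-invariance, $\varphi^+$ is $m$-a.e.\ constant on $\Lambda(p)$ for every continuous $\varphi$, giving ergodicity of $f|_{\Lambda(p)}$. Pesin's theorem applied to the hyperbolic ergodic measure $m|_{\Lambda(p)}$ now forces essential $s$- and $u$-saturation of the ergodic component, so the $f$-invariant, $u$-saturated $\Lambda^u(p)\supset\Lambda(p)$ collapses to $\Lambda^u(p)\circeq\Lambda(p)$, and dually $\Lambda^s(p)\circeq\Lambda(p)$.

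\medskip

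\emph{Principal obstacle.} The technical heart is the Hopf-type chaining through the anchor manifolds. One cannot assert that individual passage points on $W^s(o(p))$ or $W^u(o(p))$ are Birkhoff regular: doing so would force the ergodic measure of $\Lambda(p)$ to equal the singular orbit measure on $o(p)$, incompatible with $m$ being smooth. The chain must therefore be constructed ``thickly'', using Pesin's absolute continuity of the stable and unstable laminations within $\Delta^{*}$, with the anchor manifolds serving as bridges between Pesin blocks rather than as literal intermediate points.
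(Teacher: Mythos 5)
Your dimension count for non-uniform hyperbolicity and your Hopf-type chain through the anchor manifolds $W^s(o(p))$, $W^u(o(p))$ for ergodicity are sound and essentially match the paper's proof of part (2) (the paper likewise treats hyperbolicity of $m|_{\Lambda(p)}$ as an immediate consequence of transversality, and proves ergodicity by pushing typical points toward $W^u(p)$ with the $\lambda$-lemma and transporting $\varphi^+$ by absolutely continuous stable holonomy between unstable disks, so no intermediate point need be Birkhoff regular). The genuine gap is in how you get $m(\Lambda(p))>0$ and the identification $\Lambda^u(p)\circeq\Lambda^s(p)$. You claim that inside a suitable Pesin block the $u$-saturated set $\Lambda^u(p)$ and the $s$-saturated set $\Lambda^s(p)$ ``multiply out'' to a positive-measure intersection via local product structure. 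This does not follow from the hypotheses: $m(\Lambda^u(p))>0$ and $m(\Lambda^s(p))>0$ give you positive measure in some common block $\cR_{\eps,l}$, but nothing forces the two sets to charge a \emph{common small product box}; they could a priori be supported in disjoint regions of $M$, and then the product argument produces nothing. Moreover, even where both sets are present, Pesin blocks do not carry a genuine product structure for free (leaf dimensions may vary over $\Lambda^u(p)$, and local stable and unstable leaves of nearby regular points need not meet in the right dimension) --- this failure is exactly why Pesin's theorem yields only countably many ergodic components rather than ergodicity. The entire role of the periodic point $p$ is to supply the missing mechanism: unstable manifolds of points of $\Lambda^u(p)$ accumulate on $W^u(o(p))$ by the $\lambda$-lemma, while stable manifolds of a positive-measure piece of $\Lambda^s(p)$ cross $W^u(o(p))$ transversally, so the two sets are forced to interact \emph{near $W^u(o(p))$} no matter where they sit in $M$. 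The paper's part (A.1) implements this: it takes a typical point $x$ of $\Lambda^u(p)$ for the characteristic function ${\mathbf 1}_{\Lambda^s(p)}$, a Lebesgue density point $y$ of $\Lambda^s(p)\cap\cR_{\eps,l}$ with $d(y,W^u(p))<\delta/2$, uses the $\lambda$-lemma to get $W^u(f^k(x))\pitchfork W^s_{loc}(y)$, sub-foliates a neighborhood of $y$ by disks of dimension complementary to $W^s_{loc}(y)$ (to handle the dimension mismatch), and applies Fubini plus absolute continuity of the stable holonomy and the $s$-saturation of $\Lambda^s(p)$ to conclude that $m^u_{f^k(x)}$-a.e.\ enough of $W^u(f^k(x))$ lies in $\Lambda^s(p)$, whence $x\in\Lambda^s(p)$.

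Your fallback for the identification --- that Pesin's theorem forces essential $s$- and $u$-saturation of the ergodic component $\Lambda(p)$, hence the $u$-saturated superset $\Lambda^u(p)$ ``collapses'' to $\Lambda(p)$ --- is a non sequitur: essential saturation of $\Lambda(p)$ constrains only points inside $\Lambda(p)$ and says nothing about $\Lambda^u(p)\setminus\Lambda^s(p)$, which could in principle carry another ergodic component. The statement $\Lambda^u(p)\overset{\circ}{\subset}\Lambda^s(p)$ (and its converse) must be proved directly by the dynamical argument above; it is the technical heart of the theorem, not a corollary of ergodicity of $\Lambda(p)$.
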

This improves the former description of ergodic components, and completes the parallelism between Smale's Spectral Decomposition Theorem and Pesin's Ergodic Component Theorem. Indeed, let us recall that
\begin{theorem}[Smale \cite{smale67}] Let $f: M\to M$ be a diffeomorphism such that $\overline{\Per(f)}$ is hyperbolic. Then:
\begin{enumerate}
\item there are disjoint compact invariant sets $\Lambda_i$ such that $f|_{\Lambda_i}$ is transitive and
$$\overline{\Per(f)}=\Lambda_1\cup\dots\cup\Lambda_n.$$
\item Each $\Lambda_i$ decomposes as $k_i$ compact disjoint sets permuted by $f$, on each of which $f^{k_i}$ is topologically mixing and semi-conjugated to a sub-shift of finite type.
\item There exist hyperbolic periodic points $p_i$ such that $\Lambda_i=\overline{H(p_i)}$, where $H(p_i)$ is the homoclinic class of $p_i$.
\end{enumerate}
\end{theorem}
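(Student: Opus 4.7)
The plan is to follow Smale's original strategy, using as input only the hypothesis that $\Lambda := \overline{\Per(f)}$ is a hyperbolic set. Four tools drive the proof: the stable manifold theorem for hyperbolic sets, the Anosov closing lemma, the Palis $\lambda$-lemma (inclination lemma), and Bowen's construction of a Markov partition. The stable manifold theorem supplies local stable and unstable manifolds $W^s_\varepsilon(x), W^u_\varepsilon(x)$ of uniform size varying continuously in $x\in\Lambda$; combined with density of periodic points in $\Lambda$, the Anosov closing lemma gives local product structure, i.e.\ there is $\delta>0$ such that for $x,y\in\Lambda$ with $d(x,y)<\delta$ the bracket $[x,y] := W^s_\varepsilon(x)\cap W^u_\varepsilon(y)$ is a single point of $\Lambda$.

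On $\Per(f)$ I introduce the relation $p\sim q$ defined by $W^u(o(p))\pitchfork W^s(o(q))\ne\emptyset$ and $W^s(o(p))\pitchfork W^u(o(q))\ne\emptyset$. Reflexivity follows from density of $\Per(f)$ in $\Lambda$ combined with the local product structure (producing transverse homoclinic intersections of every hyperbolic periodic point), symmetry is trivial, and transitivity is the classical application of the $\lambda$-lemma. For each equivalence class I pick a representative $p_i$ and set $\Lambda_i := \overline{H(p_i)}$; these are compact and $f$-invariant. Local product structure forces distinct $\Lambda_i$ to be disjoint, and density of $\Per(f)$ in $\Lambda$ gives $\Lambda = \bigsqcup \Lambda_i$. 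Finiteness of the collection follows because the $\lambda$-lemma together with local product structure show that each periodic point of $\Lambda_i$ has a neighborhood in $\Lambda$ contained in $\Lambda_i$, so the $\Lambda_i$ form a finite open cover of the compact set $\Lambda$. Transitivity of $f|_{\Lambda_i}$ in item~(1) is then a chaining argument: given open $U,V\subset\Lambda_i$, pick periodic points $q\in U$, $q'\in V$ in the class of $p_i$ and combine the transverse intersections $W^u(q)\pitchfork W^s(o(p_i))$ and $W^u(o(p_i))\pitchfork W^s(q')$ via iterated application of the $\lambda$-lemma to produce an orbit passing arbitrarily close to both $U$ and $V$. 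For item~(2), let $k_i$ be the smallest positive integer admitting a nontrivial decomposition of $\Lambda_i$ into $k_i$ cyclically permuted compact $f^{k_i}$-invariant pieces; topological mixing of $f^{k_i}$ on each piece is then a variant of the same $\lambda$-lemma argument, using that $f^{k_i}$-periodic points are dense in the piece.

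For the semi-conjugacy with a subshift of finite type in item~(2), I invoke Bowen's Markov partition construction: starting from a fine cover of $\Lambda_i$ by proper rectangles $R_j$ built from the bracket $[\cdot,\cdot]$, iteratively refine until the Markov condition holds, namely that $f(R_j)\cap R_k\ne\emptyset$ implies $f(W^s_{\rm loc}(x,R_j))\subset W^s_{\rm loc}(f(x),R_k)$ together with the dual condition for $W^u$. Symbolic coding by the associated transition matrix produces the required semi-conjugacy. The principal technical obstacle is precisely this refinement step: ensuring the rectangles' boundaries are forward-stable and backward-unstable invariant is delicate and constitutes the heart of Bowen's argument. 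Everything else---existence, disjointness and finiteness of the $\Lambda_i$, transitivity, cyclic decomposition, and topological mixing---reduces to fairly direct applications of the $\lambda$-lemma within the local product structure.
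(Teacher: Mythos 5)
The paper does not prove this statement at all: it is quoted as Smale's Spectral Decomposition Theorem with a citation to \cite{smale67}, purely to set up the analogy with Pesin's Ergodic Component Theorem and with the ergodic homoclinic classes $\Lambda(p)$. So there is no in-paper argument to compare yours against, and I can only judge your outline on its own terms. It is the standard modern proof: local product structure on $\overline{\Per(f)}$ from the stable manifold theorem together with shadowing and the Anosov closing lemma; the homoclinic equivalence relation on $\Per(f)$ with transitivity via the $\lambda$-lemma; relative openness of the classes in $\overline{\Per(f)}$ giving both disjointness and finiteness; and Bowen's Markov partitions for the symbolic semi-conjugacy. In that structure it is essentially sound. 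One remark: reflexivity of your relation is immediate ($p$ itself is a transverse intersection point of $W^s(o(p))$ and $W^u(o(p))$), so no appeal to density of periodic points is needed there; where that density genuinely enters is in showing that the bracket $[x,y]$ of two nearby periodic points again lies in $\overline{\Per(f)}$, which requires closing up a periodic pseudo-orbit through $[x,y]$, shadowing it, and letting the shadowing times tend to infinity while invoking expansivity -- that limiting step is the real content behind your phrase ``the Anosov closing lemma gives local product structure.''

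There is, however, one genuine error as written: you define $k_i$ as the \emph{smallest} positive integer admitting a nontrivial decomposition of $\Lambda_i$ into cyclically permuted compact pieces. That choice does not yield mixing. If $\Lambda_i$ is conjugate to a transitive subshift whose transition matrix has period $6$, it admits nontrivial cyclic decompositions into $2$, $3$ and $6$ pieces; your recipe selects $k_i=2$, and $f^{2}$ on each of the two pieces is transitive but not topologically mixing, since each splits further into three pieces permuted by $f^{2}$. You need the \emph{finest} such decomposition, i.e.\ the largest admissible $k$, and you must argue that this maximum exists (for instance because each piece is relatively open in $\Lambda_i$ by local product structure, or because any admissible $k$ divides the period of every periodic point of $\Lambda_i$). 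With that correction, the $\lambda$-lemma argument for mixing of $f^{k_i}$ on each piece, and the Markov partition construction for item (2), go through as you describe.
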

 The sets $\Lambda_i$ in the theorem above are called {\de basic sets}, and are the closure of homoclinic classes of periodic points. The homoclinic class of a periodic point $p$ is the set of periodic points that are {\de homoclinically related} to $p$, that is, the set of {\it periodic points} $x$ that satisfy relations (\ref{Bup}) and (\ref{Bsp}).\par
A combination of Pesin's Ergodic Component Theorem \cite{pesin1977}, Katok's Closing Lemma \cite{katok1980} and our Theorem \ref{main.theorem} above, gives an ergodic analogous to Smale's Spectral Decomposition Theorem:

\begin{theorem}[Pesin's Ergodic Component Theorem \cite{pesin1977}]\label{pesin.decomposition} Let $f:M\to M$ be a $C^{1+\alpha}$ diffeomorphism and $m$ a smooth measure that is hyperbolic over an invariant set $V$. Then:
 \begin{enumerate}
\item $$V\circeq \Lambda_1\cup\dots\cup\Lambda_n\cup\dots$$
where $\Lambda_i$ are disjoint measurable invariant sets such that $f|_{\Lambda_i}$ is ergodic.
\item Each $\Lambda_i$ decomposes as $k_i$ disjoint sets permuted by $f$, on each of which $f^{k_i}$ is mixing and isomorphic to a sub-shift of finite type (Bernoulli).
\item There exist hyperbolic periodic points $p_i$ such that $\Lambda_i=\Lambda(p_i)$
\end{enumerate}
 \end{theorem}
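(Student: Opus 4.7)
The plan is to treat parts (1) and (2) as immediate consequences of Pesin's original work \cite{pesin1977}: on any invariant set where a smooth measure is hyperbolic, the non-uniform hyperbolicity machinery produces the countable ergodic decomposition, and after passing to the iterate $f^{k_i}$ one obtains the Bernoulli/subshift structure on each component. Hence the only nontrivial point is (3), identifying each $\Lambda_i$ with $\Lambda(p_i)$ for some hyperbolic periodic $p_i$.

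Fix an ergodic component $\Lambda_i$ of positive $m$-measure. First I would choose a Pesin regular set $R \subset \Lambda_i$ of positive measure on which local stable and unstable manifolds have uniformly bounded size and vary continuously with the basepoint, and then invoke Katok's Closing Lemma \cite{katok1980} to produce a hyperbolic periodic point $p_i$ whose orbit meets $R$ and whose Pesin stable and unstable manifolds have comparable uniform size.

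The crucial next step is to verify the hypotheses of Theorem A for $p_i$. Since $p_i \in R$ and the local unstable lamination of $R$ has positive transverse measure inside $\Lambda_i$, the absolute continuity of the Pesin stable lamination allows one to push $W^u_{\mathrm{loc}}(p_i)$ along stable holonomy and conclude that $W^u(o(p_i))$ meets $W^s(x)$ transversally for a positive-measure set of $x \in \Lambda_i$. This gives $m(\Lambda^s(p_i)) > 0$, and the symmetric argument applied to $f^{-1}$ yields $m(\Lambda^u(p_i)) > 0$. Theorem A then provides $\Lambda(p_i) \circeq \Lambda^s(p_i) \circeq \Lambda^u(p_i)$ and ergodicity of $f$ on $\Lambda(p_i)$.

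To conclude, the positive-measure set of $x \in \Lambda_i$ with the transverse intersections above lies in $\Lambda_i \cap \Lambda(p_i)$, so ergodicity of $f$ on both $\Lambda_i$ and $\Lambda(p_i)$ forces $\Lambda_i \circeq \Lambda(p_i)$. I expect the main obstacle to be the transversality/absolute continuity step: one must select $p_i$ so that its invariant manifolds, which a priori are only immersed submanifolds of finite but possibly small size, genuinely meet the Pesin laminations of $m$-typical points of $\Lambda_i$ in a transverse fashion on a positive-measure set. It is precisely here that Katok's construction of hyperbolic periodic points inside Pesin blocks and the Pesin absolute continuity theorem are both indispensable; everything else is bookkeeping applied on top of Theorem A and Pesin's decomposition.
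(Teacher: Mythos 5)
Your proposal follows essentially the same route the paper indicates: items (1) and (2) are quoted from Pesin, and item (3) is obtained by combining Katok's Closing Lemma (which supplies a hyperbolic periodic point with uniformly sized invariant manifolds shadowing a recurrent point of a Pesin block inside each ergodic component, whence $m(\Lambda^s(p_i))>0$ and $m(\Lambda^u(p_i))>0$) with Theorem \ref{main.theorem}, after which ergodicity of $f$ on both $\Lambda_i$ and $\Lambda(p_i)$ together with their positive-measure overlap forces $\Lambda_i\circeq\Lambda(p_i)$. This is exactly the argument sketched in the paper, only written out in more detail (modulo the harmless imprecision that Katok's periodic point lies near, not in, the chosen regular set).
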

 \par
In fact, item (3) is a consequence of the following: the Closing Lemma of A. Katok \cite{katok1980} provides ``visible" ergodic components with hyperbolic periodic points. Our criterion (Theorem \ref{main.theorem}) then implies that these ergodic components coincide with ergodic homoclinic classes.\par
Let us note that A. Tahzibi already used hyperbolic periodic points to prove ergodicity \cite{ta}. However, in his work, he required absence of zero Lyapunov exponents. \par
Finally, one could ask if the hypotheses of Theorem \ref{main.theorem} are sharp, that is: could we obtain ergodicity by simply asking, for instance, that $m(\Lambda^s(p))>0$? The answer is negative: in \S\ref{subsection.example} we present an example of a non-ergodic diffeomorphism with $m(\Lambda^s(p))=1$ and $m(\Lambda^u(p))=0$.
\subsection{A conjecture of C. Pugh and M. Shub concerning stable ergodicity}
A $C^{1+\alpha}$ volume preserving diffeomorphism $f$ is {\de stably ergodic} if there exists
a $C^1$ neighborhood $\cU$ of $f$ such that all $C^{1+\alpha}$ volume preserving diffeomorphisms in $\cU$ are
ergodic.\par
Until 1994, the only known examples of stably ergodic diffeomorphisms were Anosov diffeomorphisms. Indeed, the ergodic theory of uniformly hyperbolic diffeomorphisms has been extensively studied, beginning with the works of D. Anosov, Ya. Sinai, D. Ruelle and R. Bowen.\par
In 1994, M. Grayson, C. Pugh and M. Shub gave the first example of a
   non-hyperbolic diffeomorphism that is stably ergodic with respect to a smooth
   measure \cite{gps}. Their example belongs to a broad class of
   dynamics which is called partially hyperbolic.
 A diffeomorphism
$f:M\rightarrow M$ is partially hyperbolic if $TM$ splits into
three invariant bundles: one that is contracting, one that is expanding, and a third one, the center bundle, which has an intermediate behavior (see
Section \ref{preliminaries} for a precise definition).
For instance, the time-one map of the geodesic flow on the unit
 bundle of a compact manifold with negative sectional curvature is partially hyperbolic.\par
 In 1995, C. Pugh and M. Shub posed the following:
 \begin{conjecture}
 [C. Pugh, M. Shub \cite{pughshub1995,pughshub2000}] Stable ergodicity is $C^r$-dense among partially hyperbolic diffeomorphisms.
 \end{conjecture}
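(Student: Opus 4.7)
The final statement is the full Pugh--Shub conjecture, which is open in general; the plan I would carry out corresponds to the special case announced in the abstract, namely $C^1$-density of stable ergodicity among volume-preserving partially hyperbolic diffeomorphisms with two-dimensional center bundle. The central idea is that Theorem A converts the difficult analytic problem of establishing accessibility into the more flexible geometric problem of producing a single hyperbolic periodic point $p$ for which both $\Lambda^s(p)$ and $\Lambda^u(p)$ have positive measure; this latter property admits a $C^1$-robust realization via \emph{blenders}.

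The concrete steps are as follows. Starting from an arbitrary volume-preserving partially hyperbolic $f$ with two-dimensional center, I would first use $C^1$-perturbation tools (Pugh's closing lemma, Ma\~n\'e's ergodic closing lemma, Bonatti--Crovisier connecting lemmas) to replace $f$ by a $C^1$-close $g$ carrying a hyperbolic periodic point $p$ whose center-stable and center-unstable manifolds are non-trivial. Then, by a further $C^1$-small perturbation localized near the orbit of $p$, I would implant a blender adapted to the center-(un)stable direction, arranged so that $W^u(o(p))$ robustly intersects an open family of center-stable disks, and symmetrically $W^s(o(p))$ robustly intersects an open family of center-unstable disks. Using the absolute continuity of the strong stable and unstable foliations of partially hyperbolic systems, these robust intersections force $m(\Lambda^s(p))>0$ and $m(\Lambda^u(p))>0$ for every $h$ in a $C^1$-neighborhood $\cU$ of $g$. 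Theorem A then yields ergodicity of every $h\in\cU$, so that $g$ is stably ergodic; since $g$ is $C^1$-close to $f$, this proves the desired density.

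The main obstacle is the blender construction in the two-dimensional center setting. When $\dim E^c = 1$, the Bonatti--D\'\i az blender is a well-understood one-parameter covering family; in dimension two one must engineer invariant cone fields together with a self-similar family of center-(un)stable graphs whose union has non-empty interior, and one must simultaneously arrange that this configuration is reachable by a $C^1$-small perturbation of any starting diffeomorphism. A secondary obstacle is a case analysis driven by the signature of the center Lyapunov exponents at $p$: the local perturbations needed to reach the blender regime differ depending on whether the center exponents are of the same or opposite sign, and one must either rule out or handle separately the degenerate configurations (for instance those with totally minimal strong foliations) that might resist the construction. Once these geometric ingredients are in place, Theorem A supplies the ergodic theory essentially for free, which is precisely the leverage that the positive-measure formulation of the criterion is designed to provide.
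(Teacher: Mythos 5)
You are right that the full conjecture is not proved here and that the paper's contribution is the two-dimensional-center, $C^1$-dense case, and your high-level shape (perturb, implant a blender, feed the resulting robust intersections into Theorem A) matches the paper. But your plan has two genuine gaps. First, you frame the ``main obstacle'' as engineering a blender adapted to a two-dimensional center, with invariant cone fields and a self-similar family of center-(un)stable graphs. The paper never does this and does not need to: it first perturbs (Baraviera--Bonatti to make $\int_M\log\operatorname{Jac}(Df|E^c)\,dm$ nonzero, then Bochi--Viana) so that either the center exponents have the same sign on a positive measure set --- in which case Burns--Dolgopyat--Pesin already gives stable ergodicity and no blender is needed --- or else $E^c$ acquires a nontrivial dominated splitting $E^c=E^-\oplus E^+$ with $\int\lambda^-<0<\int\lambda^+$. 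In the latter case the Ergodic Closing Lemma and the conservative Franks Lemma produce hyperbolic periodic points of unstable indices $u$, $u+1$, $u+2$, and the blenders attached to the index-$(u+1)$ point are the standard one-dimensional-center Bonatti--D\'{\i}az blenders relative to $E^+$ (resp. $E^-$), supplied by the companion result \cite{rhrhtu2009}. The domination also gives $C^+\cup C^-\circeq M$, which is what makes $\Lambda(p)\circeq M$ at the end. Your route, as stated, would require a new blender construction that the paper deliberately avoids.

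Second, your claim that Theorem A ``converts the difficult analytic problem of establishing accessibility into'' a geometric one is backwards: accessibility is used essentially, not bypassed. The first step of the proof is Dolgopyat--Wilkinson's theorem making $f$ stably accessible, and accessibility is what guarantees that almost every orbit is ($\eps$-)dense robustly; this density is precisely how one shows that for $x\in C^+$ some iterate of the $(u+1)$-disk in $W^u(x)$ tangent to $E^+\oplus E^u$ lands in ${\rm Bl}^{cu}(p)$ as a well-placed strip, giving $C^+\subset\Lambda^u(p)$ (and symmetrically $C^-\subset\Lambda^s(p)$). Without accessibility you have no mechanism forcing the Pesin unstable disks of a full-measure set of points into the blender, so the positive-measure hypotheses of Theorem A would not follow from your ``open family of disks'' alone. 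What Theorem A actually buys is the ability to handle mixed-sign center exponents, the case left open by Burns--Dolgopyat--Pesin; it does not remove accessibility from the argument.
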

In the last years, many advances have been made in the direction of this conjecture, a survey of which can be found in \cite{rru2}. In particular, we
want to mention the very recent works: by K. Burns and A. Wilkinson
\cite{burns-wilkinson2006} proving that (essential) accessibility
plus a bunching condition implies ergodicity, and by F. Rodriguez Hertz, M. Rodriguez Hertz, and
R. Ures \cite{rhrhu2006} obtaining that stable ergodicity is $C^\infty$-dense among partially hyperbolic diffeomorphisms with one-dimensional center bundle.\par
Many of the arguments of \cite{burns-wilkinson2006} and
\cite{rhrhu2006} seem to be hard to
generalize. Apparently, proving the Pugh-Shub
Conjecture for center bundle of any dimension will require dramatically new techniques.
Indeed, some center bunching condition seems to be needed to obtain that the
holonomies of the strong foliations when restricted to center
manifolds are Lipschitz. This fact is used in an essential way, for instance, in \cite{burns-wilkinson2006}. Also, one-dimensionality of the center bundle has been crucial in the arguments of \cite{rhrhu2006}.
\par
In this paper we try a different approach that allows us to
prove the $C^1$-denseness of stable ergodidicity among partially hyperbolic diffeomorphisms with two-dimensional center bundle:
\begin{maintheorema}[Pugh-Shub Conjecture]\label{teorema.prueba.conjetura}
Stable ergodicity is $C^1$-dense among partially hyperbolic diffeomorphisms with 2-dimensional center bundle.
\end{maintheorema}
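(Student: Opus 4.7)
The plan is to combine Theorem A with a blender construction. By Theorem A, for a volume-preserving $C^{1+\alpha}$ diffeomorphism $g$ to be ergodic it suffices to exhibit a hyperbolic periodic point $p$ with $m(\Lambda^s(p))>0$ and $m(\Lambda^u(p))>0$. For \emph{stable} ergodicity these positive-measure conditions must hold $C^1$-robustly, so the task reduces to arranging both the hyperbolic periodic point and the mechanism producing the transversal intersections \eqref{Bup} and \eqref{Bsp} to be $C^1$-open properties of the perturbed diffeomorphism.

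Starting from an arbitrary partially hyperbolic $f$ with $\dim E^c=2$, I would first apply standard $C^1$-perturbative tools (Ma\~n\'e--Bochi--Viana, Baraviera--Bonatti, Franks-type lemmas, Didier-type perturbations for accessibility) to approximate $f$ by $f_1$ admitting a hyperbolic periodic point $p$ whose center Lyapunov exponents are both nonzero and for which the strong stable/unstable foliations are, say, minimal. Since the center is two-dimensional, the two center exponents must be controlled simultaneously; according to whether their signs coincide or not, $p$ carries either a ``uniformly-hyperbolic-like'' or a ``saddle-type'' center, and each sub-case has to be treated.

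Next, by a further $C^1$-small volume-preserving perturbation supported near a homoclinic intersection of $p$, I would create a \emph{blender} $\Gamma$ homoclinically related to $o(p)$, designed so that the unstable set of $o(p)$ $C^1$-robustly meets a $C^1$-open family of small disks transverse to $E^u$, and dually for a $cu$-blender controlling $W^s(o(p))$. Combining this blender property with minimality (or density) of the strong stable foliation of $f_1$ and iterating, the set of points $x$ whose unstable Pesin manifold meets $W^s(o(p))$ covers $M$ modulo zero, and similarly for the dual statement. Hence $m(\Lambda^s(p))=m(\Lambda^u(p))=1$ mod~$0$, $C^1$-robustly. Theorem A then yields ergodicity of each perturbation in a $C^1$-neighborhood of $f_1$, proving stable ergodicity.

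The main obstacle is the blender construction of Step 2. In the one-dimensional center case of \cite{rhrhu2006} the classical Bonatti--D\'{\i}az blender suffices, but when $\dim E^c=2$ one must design a blender whose unstable lamination has the right geometric interaction with \emph{both} center directions, while preserving volume and respecting the partial hyperbolicity splitting. Equally delicate is showing that the unstable set of the blender actually sweeps out a \emph{full-measure} family of strong stable leaves, rather than merely an open family: this is needed because the hypothesis of Theorem A is measure-theoretic, not topological, and it requires marrying the blender geometry to the density of periodic accessibility classes created in the first step.
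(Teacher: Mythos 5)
Your overall strategy (reduce to Theorem A via blenders and make the hypotheses $C^1$-robust) is the right one, but the proposal is missing the structural ideas that make the measure-theoretic part work, and your proposed mechanism for getting $m(\Lambda^s(p))>0$ and $m(\Lambda^u(p))>0$ would not go through as stated. The paper first uses Dolgopyat--Wilkinson to get stable accessibility, then splits according to the signs of the two center exponents: if they can be made to have the same sign on a positive measure set, Burns--Dolgopyat--Pesin already gives stable ergodicity and no blender is needed. In the remaining (mixed-sign) case, the decisive ingredient you omit is a \emph{dominated splitting} $E^c=E^-\oplus E^+$ of the center bundle, obtained from the Bochi--Viana dichotomy after Baraviera--Bonatti makes $\int(\lambda^++\lambda^-)\,dm>0$. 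This splitting does two jobs at once: (i) it forces $C^+\cup C^-\circeq M$, where $C^\pm$ are the sets with $\lambda^+>0$, resp.\ $\lambda^-<0$ (if $\lambda^-(x)\ge 0$, domination gives $\lambda^+(x)>0$), which is where the full-measure statement actually comes from; and (ii) for each $x\in C^+$ it guarantees that the unstable Pesin manifold of $x$ contains a $(u+1)$-disk tangent to $E^u\oplus E^+$, i.e.\ an object of the correct dimension and direction to become a well-placed $(u+1)$-strip after iteration into the $cu$-blender. Your alternative mechanism --- minimality of the strong foliations plus the unstable set of $o(p)$ sweeping an open family of disks transverse to $E^u$ --- only controls the strong unstable direction and gives no information about the dimension or tangent direction of the Pesin manifolds of typical points, which is exactly what is needed to verify $x\in\Lambda^u(p)$ pointwise.

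Two further concrete gaps: the blender is not created ``near a homoclinic intersection of $p$'' in some new two-dimensional-center form; the paper first uses the Ergodic Closing Lemma and the conservative Franks Lemma to produce \emph{three} hyperbolic periodic points of consecutive unstable indices $u$, $u+1$, $u+2$, and then invokes the conservative blender-creation theorem of \cite{rhrhtu2009} to attach a standard one-dimensional-center $cu$-blender and $cs$-blender to the middle-index point $p_1$. The two-dimensionality of the center is handled by having \emph{two} classical blenders, one for each factor of the dominated splitting, not by designing a blender interacting with both center directions simultaneously. Finally, the ``full-measure family of strong stable leaves'' worry you raise at the end dissolves once the argument is set up correctly: full measure comes from $C^+\cup C^-\circeq M$ and from density of a.e.\ orbit (accessibility), after which the blender is applied to each point's own Pesin disk individually; one then concludes with Theorem A, and robustness follows because the blender property, the dominated splitting, and the integral conditions on $\lambda^\pm$ are all $C^1$-open.
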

Moreover, given any $C^r$ volume preserving diffeomorphism it is possible to make a $C^1$-perturbation in order to find a $C^r$ diffeomorphism which is stably ergodic. We stress that $C^{1+\alpha}$ regularity condition for the systems under consideration is important for the existence of stable manifolds or the absolute continuity of the stable lamination \cite{pesin1977, pughshub1989}. Let us mention that recently, A. Avila, J. Bochi and A. Wilkinson have proved the $C^1$-density of ergodic diffeomorphisms among symplectic ones \cite{avilabochiwilkinson2008}. It is still an open question if stable ergodicity is dense among partially hyperbolic symplectic diffeomorphisms.\par
\par
Theorem \ref{teorema.prueba.conjetura} implies that ergodicity is $C^1$-generic among partially hyperbolic maps with two-dimensional center bundle. This is due to an outstanding result of A. Avila, proving the $C^1$-denseness of smooth diffeomorphisms among $C^1$-volume preserving diffeomorphisms \cite{avila2008}. \par
In the proof of Theorem \ref{teorema.prueba.conjetura}, the Lyapunov exponents in the center directions play a significant role (see next section). An antecedent result taking advantage of the interplay between
partial hyperbolicity and non-zero Lyapunov exponents is the work of K. Burns, D. Dolgopyat and Ya. Pesin \cite{burns-dolgopyat-pesin2002}: they prove that if a partially hyperbolic $f$ is accessible and the center Lyapunov exponents have the same sign on a positive
measure set then $f$ is stably ergodic (See Theorem \ref{BDP}).
The question that
naturally arises is: what happens when the exponents are not zero
but the signs of central Lyapunov exponents are different? In this paper we use our criterion in order to prove the ergodicity of diffeomorphisms in the case of mixed sign center Lyapunov exponents. The criterion is used in combination with the blenders introduced by C. Bonatti and L. Diaz in a very different context \cite{bodi1996}, in order to create one single ergodic component which is in some sense robust. This produces stable ergodicity. \par
In next section, we give a sketch of the proof of Theorem \ref{teorema.prueba.conjetura}. Section \ref{preliminaries} introduces some preliminary concepts and results. Section \ref{section.criterion} is devoted to proving Theorem \ref{main.theorem}. It has an introductory part explaining the proof. In Section \ref{subsection.example}, an example is presented, showing that $m(\Lambda^s(p))>0$ is not enough to guarantee that $\Lambda^s(p)$ is an ergodic component.
In Section \ref{section.prueba.conjetura} we prove Theorem \ref{teorema.prueba.conjetura}. Its proof is split into two theorems: Proposition \ref{theorem.approximation} is proved in Section \ref{subsection.approximation}, and Theorem \ref{teorema.establemente.ergodico} is proved in Section \ref{subsection.establemente.ergodico}.
%
\section{Sketch of the proof of the Pugh-Shub conjecture}\label{section.sketch.pugh.shub}
In this section we sketch the proof of Theorem \ref{teorema.prueba.conjetura}, the concepts that appear in italics are defined and explained in Subsections \S\ref{subsection.partialdominated} and \S\ref{subsection.blenders}.\par
Let $f$ be a $C^{1+\alpha}$ {\em partially hyperbolic} diffeomorphism with two-dimensional center bundle, preserving a smooth measure $m$. We want to show that $f$ can be $C^1$-approximated by a stably ergodic diffeomorphism. \par
First of all, let us note that $f$ can be $C^1$-approximated by a {\em stably accessible} diffeomorphism, due to a result by D. Dolgopyat and A. Wilkinson \cite{dolgopyat-wilkinson2003}. So, we may assume that $f$ is stably accessible. As a consequence, every invariant set with positive measure is, in fact, dense.\par
The proof is split into cases, according to the signs of the {\em center Lyapunov exponents}, $\lambda^+$ and $\lambda^-$. The following remarkable theorem rules out two important cases:
\begin{theorem}[K. Burns, D. Dolgopyat, Ya. Pesin \cite{burns-dolgopyat-pesin2002}]\label{BDP} Let $f$ be a $C^{1+\alpha}$ partially hyperbolic diffeomorphism preserving a smooth measure $m$. Assume that $f$ is accessible and has negative center Lyapunov exponents on a set of positive measure. Then $f$ is stably ergodic.
\end{theorem}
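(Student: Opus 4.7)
The plan is to combine Pesin theory on the positive-measure set $\Lambda^-\subset M$ of points with (all) negative center Lyapunov exponents with a Hopf-type argument that is then propagated to the whole of $M$ using accessibility; stable ergodicity will follow because both hypotheses persist under small $C^1$ perturbations.

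First I would observe that on $\Lambda^-$ the full Lyapunov spectrum along $E^{ss}\oplus E^c$ is negative, so Pesin's stable manifold theorem provides at each $x\in\Lambda^-$ an immersed $C^{1+\alpha}$ stable manifold $W^s_{\rm Pes}(x)$, tangent at $x$ to $E^{ss}(x)\oplus E^c(x)$, which in particular contains the strong stable leaf $W^{ss}(x)$. Dually, no center exponent at such an $x$ is positive, so the Pesin unstable manifold at $x$ coincides with the strong unstable leaf $W^{uu}(x)$. Both Pesin laminations have absolutely continuous holonomies along $\Lambda^-$.

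Next I would apply the Hopf argument to a countable $C^0$-dense family $\{\varphi_n\}$: the forward and backward Birkhoff averages $\varphi_n^\pm$ exist and agree $m$-a.e., with $\varphi_n^+$ constant along Pesin stable leaves and $\varphi_n^-$ constant along Pesin unstable leaves. On $\Lambda^-$ this immediately forces the common value to be constant along both $W^{ss}$ and $W^{uu}$ leaves through $x$. The absolute continuity of the strong foliations (Pugh--Shub) together with the $f$-invariance of $\varphi_n^\pm$ then propagates this constancy to an $su$-saturated set of full $m$-measure. Since $f$ is accessible, such a set must coincide with the unique accessibility class, namely $M$, so each $\varphi_n^+$ is $m$-essentially constant, whence ergodicity.

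For stable ergodicity I would invoke that accessibility is $C^1$-open in the accessible regime (a direct holonomy argument), while upper semicontinuity of integrated Lyapunov exponents ensures that nearby volume-preserving $g$ still have a positive-measure set of points with negative center exponents. Hence the preceding argument applies uniformly in a $C^1$-neighborhood of $f$. The main obstacle is the propagation step: turning $m$-a.e.\ constancy of $\varphi_n^+$ along strong leaves meeting $\Lambda^-$ into essential constancy on all of $M$. This requires a careful measurable selection along $su$-paths to transfer regularity across accessibility classes, and is the technical heart of the Burns--Dolgopyat--Pesin argument.
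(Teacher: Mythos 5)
This theorem is not proved in the paper: it is imported as a black box from Burns--Dolgopyat--Pesin \cite{burns-dolgopyat-pesin2002}, so there is no internal proof to compare against. Judged on its own terms, your sketch sets up the right objects (Pesin stable manifolds tangent to $E^{ss}\oplus E^c$ on the positive-measure set $\Lambda^-$, unstable Pesin manifolds coinciding with $W^{uu}$, the Hopf argument for a countable dense family of continuous functions), but the step you yourself flag as ``the main obstacle'' --- upgrading a.e.\ constancy of $\varphi_n^{\pm}$ to constancy on an $su$-saturated full-measure set by pushing along accessibility paths --- is a genuine gap, and it is also the wrong place to look for the heart of the argument. Iterating stable and unstable holonomies along $su$-paths is precisely where the naive Hopf argument breaks down for partially hyperbolic systems: ``almost every'' is not preserved pointwise under repeated holonomies, which is why Pugh--Shub and Burns--Wilkinson need julienne density points and center bunching. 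The actual mechanism of \cite{burns-dolgopyat-pesin2002} avoids $su$-propagation entirely: on $\Lambda^-$ the Pesin stable manifolds have full codimension $u$ and are transverse to the \emph{global} strong unstable foliation, so near a Lebesgue density point of $\Lambda^-\cap\PB$ one has a local product structure of two absolutely continuous laminations; the Hopf argument then shows that the ergodic component containing $\Lambda^-$ is open mod $0$ and contains a ball of uniform radius. Accessibility enters only through the fact that almost every orbit is dense (Remark \ref{remark.accessibility.dense}), which forces this open-mod-$0$ invariant set to have full measure.

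A second concrete error: you assert that accessibility is $C^1$-open ``by a direct holonomy argument.'' This is not known for general partially hyperbolic diffeomorphisms (it is known for one-dimensional center; in general only $C^1$-density of \emph{stable} accessibility is available, by Dolgopyat--Wilkinson). What does survive perturbation, and what the argument actually needs, is that for all $g$ in a $C^1$-neighborhood of an accessible $f$ almost every $g$-orbit is $\eps$-dense (Theorem \ref{teorema.accesible.orbita.densa}); combined with the uniform radius of the ball inside the ergodic component, this yields stable ergodicity. Your persistence argument for the exponent hypothesis is essentially correct, provided you first invoke ergodicity of $f$ to conclude that the largest center exponent is negative a.e., so that $\inf_n\frac1n\int_M\log\|Df^n|_{E^c}\|\,dm<0$; this quantity is an infimum of functions continuous in $g$, hence upper semicontinuous, and its negativity passes to nearby volume-preserving maps.
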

In the case that $f$ is not in the hypotheses of Theorem \ref{BDP} (center Lyapunov exponents are both positive or both negative), then by a combination of the results of A. Baraviera and C. Bonatti \cite{baraviera-bonatti2003}, and J. Bochi and M. Viana \cite{bochi-viana2005}, we can make a $C^1$ perturbation to obtain the following:
(a) the center bundle has a {\em dominated splitting} $E^c=E^+\oplus E^-$, and (b) the sets $C^+$ where $\lambda^+(x)>0$, and $C^-$ where $\lambda^-(x)<0$ have positive measure. \par
Note that $m(C^+\cup C^-)=1$. Indeed, if $x$ does not belong to $C^-$, then by definition $\lambda^-(x)\geq 0$. The domination of the center splitting then implies that $\lambda^+(x)>0$, hence $x\in C^+$.\par
The main novelty here is the use of {\em blenders} in order to show that $C^+\subset \Lambda^u(p_1)$ and $C^-\subset\Lambda^s(p_1)$ for some periodic point $p_1$, for all diffeomorphisms in a neighborhood of the perturbation of $f$. Theorem \ref{main.theorem} then applies and proves that this perturbation is stably ergodic. \par
Now, we can assume that neither $E^+$ nor $E^-$ in the dominated splitting above are hyperbolic. Otherwise, we could consider $f$ as a partially hyperbolic diffeomorphism with one-dimensional center bundle. In this case, $f$ is known to be approximated by stably ergodic diffeomorphisms \cite{burns-wilkinson2006,rhrhu2006}.\par
Using the Ergodic Closing Lemma for the conservative setting \cite{arnaud1998} and the Conservative Franks' Lemma, we obtain a $C^1$-perturbation with three hyperbolic periodic points $p_0$, $p_1$ and $p_2$ of {\em unstable indices}, $u$, $(u+1)$ and $(u+2)$, where $u=\dim E^u$.
Our goal is to prove that for some perturbation the sets $C^-$ and $C^+$ are robustly included in $\Lambda(p_1)$.\par
Using ideas of C. Bonatti and L. Diaz \cite{bonatti-diaz2006} adapted to the conservative setting \cite{rhrhtu2009}, we obtain two {\it blenders}, ${\rm Bl}^{cu}(p_1)$ and ${\rm Bl}^{cs}(p_1)$, associated to $p_1$ (see Theorem \ref{proposition.blender.conservative}). The main property of a {\em $cu$-blender} is that any {\em $(u+1)$-strip} which is {\em well placed} in ${\rm Bl}^{cu}(p_1)$ will transversely intersect $W^s(p_1)$, and of a {\em $cs$-blender} is that
any {\em $(s+1)$-strip} which is {\em well placed} in ${\rm Bl}^{cs}(p_1)$ will transversely intersect $W^u(p_1)$.
Moreover, this property of blenders is $C^1$-robust.\par
Now, if a point $x$ is in $C^+$, then for some suitable iterate, its unstable Pesin manifold is a $(u+1)$-strip which is well placed in ${\rm Bl}^{cu}(p_1)$, so $x$ will belong to $\Lambda^u(p_1)$. Hence, $C^+\subset \Lambda^u(p_1)$. Analogously, one will obtain that $C^-\subset \Lambda^s(p_1)$. The ergodicity criterion introduced in Theorem \ref{main.theorem} now applies, and we obtain that $\Lambda(p)\circeq M$, so the diffeomorphism is ergodic. But, as we have mentioned, this intersection property of blenders is $C^1$-robust, so this new diffeomorphism is in fact stably ergodic. In this way we have obtained a $C^1$-perturbation of $f$ which is stably ergodic.\par
Let us state this result more precisely:
\begin{maintheorema}\label{teorema.establemente.ergodico}
Let $f$ be a $C^{1+\alpha}$ partially hyperbolic diffeomorphism preserving a smooth measure, and such that its center bundle is two-dimensional. Assume $f$ satisfies the following properties:
 \begin{enumerate}
 \item $f$ is {\em accessible},
 \item  $E^c = E^- \oplus E^+$ admits a nontrivial dominated splitting
  \item $\int_M \lambda^- d m < 0 $ and $\int_M \lambda^+ d m > 0,$
 \item $f$ admits a $cs$-blender ${\rm Bl}^{cs}(p)$ and a $cu$-blender ${\rm Bl}^{cu}(p)$ associated to a hyperbolic periodic point $p$ of stable index $(s+1)$.
  \end{enumerate}
  then $f$ is stably ergodic.
\end{maintheorema}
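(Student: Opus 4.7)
The plan is to follow the outline in Section~\ref{section.sketch.pugh.shub}: produce a single enormous ergodic component $\Lambda(p)$ by combining the blenders from hypothesis~(4) with the criterion of Theorem~\ref{main.theorem}, and then verify that every ingredient is $C^1$-robust. I first partition $M$ modulo null sets by the signs of the center Lyapunov exponents. Set $C^+ = \{x : \lambda^+(x) > 0\}$ and $C^- = \{x : \lambda^-(x) < 0\}$. Hypothesis~(3) yields $m(C^\pm) > 0$, while the dominated splitting in~(2) provides a uniform positive gap $\delta$ such that $\lambda^+(x) \geq \lambda^-(x) + \delta$ at every Oseledets-regular point. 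Hence if $x \notin C^-$ then $\lambda^-(x) \geq 0$, forcing $\lambda^+(x) \geq \delta > 0$ and $x \in C^+$; so $m(C^+ \cup C^-) = 1$.

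The crux is to show $C^+ \subset \Lambda^u(p)$ and $C^- \subset \Lambda^s(p)$ modulo null sets. For a Pesin-regular $x \in C^+$, the Pesin unstable manifold $W^u(x)$ has dimension $u+1$ and is tangent at $x$ to $E^u(x) \oplus E^+(x)$. Uniform expansion along $E^u$, positivity of $\lambda^+$ on the orbit of $x$, and the recurrence of the orbit of $x$ into a neighborhood of $p$ (provided by accessibility~(1) together with Poincar\'e recurrence on the positive-measure invariant set $C^+$) drive some iterate $f^n(W^u_{\mathrm{loc}}(x))$ into ${\rm Bl}^{cu}(p)$ as a $(u+1)$-strip, tangent to the adapted cone field and well placed in the sense of the blender definition. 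The defining property of the $cu$-blender then forces a transverse intersection with $W^s(p)$, so $x \in \Lambda^u(p)$. Symmetric reasoning using ${\rm Bl}^{cs}(p)$ yields $C^- \subset \Lambda^s(p)$. Making this geometric ``sweep'' of a Pesin (rather than uniformly hyperbolic) disk into a well-placed strip precise is the main obstacle I anticipate.

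With both $m(\Lambda^u(p))$ and $m(\Lambda^s(p))$ positive, Theorem~\ref{main.theorem} gives $\Lambda(p) \circeq \Lambda^u(p) \circeq \Lambda^s(p)$ and ergodicity of $f|_{\Lambda(p)}$. Combined with the inclusion $C^+ \cup C^- \subset \Lambda^u(p) \cup \Lambda^s(p) \circeq \Lambda(p)$ and $m(C^+ \cup C^-) = 1$, this yields $\Lambda(p) \circeq M$, proving ergodicity of $f$.

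For stability, every ingredient above is $C^1$-robust. Partial hyperbolicity and the dominated center splitting are open conditions; the hyperbolic continuation $p(g)$ and both blenders persist by Theorem~\ref{proposition.blender.conservative}; the strict sign conditions on the integrated center exponents in~(3) survive by the continuity of integrated Lyapunov exponents at diffeomorphisms with a dominated splitting (Bochi--Viana); and the blender-based sweep argument depends only on open cone-field and positioning conditions, not on accessibility of the perturbed map. Consequently every $C^{1+\alpha}$ volume-preserving $g$ that is $C^1$-close to $f$ is ergodic, so $f$ is stably ergodic.
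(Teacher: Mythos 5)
Your proposal follows the paper's proof almost step for step: the decomposition $M \circeq C^+ \cup C^-$ via domination, the sweep of a $(u+1)$-disk of the Pesin unstable manifold into a well-placed strip in ${\rm Bl}^{cu}(p)$, the application of Theorem \ref{main.theorem}, and the openness of (C.2)--(C.3). However, there is a genuine gap in your treatment of stability, precisely at the point you wave away. For the unperturbed $f$ you get orbits of $C^+$ into the blender because accessibility forces almost every orbit to be dense (Remark \ref{remark.accessibility.dense}); note that Poincar\'e recurrence on $C^+$ is not the right tool here, since recurrence only returns orbits to positive-measure subsets of $C^+$, whereas the blender is a fixed small open set with no a priori measure-theoretic relation to $C^+$. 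For a perturbation $g$, accessibility may be lost, and you assert that the sweep ``depends only on open cone-field and positioning conditions, not on accessibility of the perturbed map.'' But the sweep has nothing to act on unless a positive-measure set of $g$-orbits actually visits ${\rm Bl}^{cu}(p_g)$, and you give no mechanism for that. This is exactly where the paper inserts the quantitative input you are missing: by Theorem \ref{teorema.accesible.orbita.densa} (Burns--Dolgopyat--Pesin), accessibility of $f$ guarantees that for each $\eps>0$ there is a $C^1$-neighborhood on which almost every orbit of every $g$ is $\eps$-dense. Choosing $\eps$ so that ${\rm Bl}^{cu}(p_g)$ contains a ball of radius $\eps$ uniformly over the neighborhood, one concludes that a.e.\ $g$-orbit enters the blender infinitely often, and only then does the robust intersection property of the blender apply. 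Without this step the argument establishes ergodicity of $f$ but not of its perturbations, i.e.\ not stable ergodicity.

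Two smaller imprecisions: for $x\in C^+$ the Pesin unstable manifold need not have dimension exactly $u+1$ (it could be $(u+2)$-dimensional if $\lambda^-(x)>0$ as well); what you need, and what the paper states, is that it \emph{contains} a $(u+1)$-disk tangent to $E^u\oplus E^+$. And the openness of (C.3) is obtained in the paper directly from $\int_M\lambda^{\pm}\,dm=\int_M\log Jac(Dg|E^{\pm})\,dm$ together with continuous dependence of $E^{\pm}$ on $g$; no appeal to Bochi--Viana is needed for this.
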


Note that the above theorem, like Theorem \ref{BDP}, is not a generic result, but gives precise conditions under which a diffeomorphism is stably ergodic. The scheme of the  proof of Theorem \ref{teorema.prueba.conjetura} is: either $f$ is in the hypotheses of Theorem \ref{BDP}, or else one can make a $C^1$-perturbation, so that $f$ is in the hypotheses of Theorem \ref{teorema.establemente.ergodico}. In either case, $f$ will be stably ergodic.\par
To what extent do these arguments apply? So far, with the techniques presented in this work it can be proved a Pugh-Shub Conjecture for some type of center bundles: Stable ergodicity is $C^1$-dense among partially hyperbolic diffeomorphisms for which the center bundle admits a dominated splitting $E^c=E_1\oplus E_2\oplus E_3$, with $\dim E_1=\dim E_3=1$, and $E_1$, $E_3$ are non-hyperbolic. The proof of this result will appear elsewhere.
\section{Preliminaries}\label{preliminaries}

\subsection{Non-uniform hyperbolicity}\label{subsection.pesin}
Let us review some results about Pesin theory that shall be used in this paper. A good summary of these facts may be found, for instance, in
\cite{pughshub1989} and \cite{ledrappieryoung1985}. For further references, see
A. Katok's paper \cite{katok1980} and  the book by L. Barreira and Ya. Pesin \cite{barreira-pesin}.\par
Let $f:M\to M$ be a $C^1$ diffeomorphism  of a compact Riemannian manifold of dimension $n$.
Given a vector $v\in T_xM$, let the {\de Lyapunov exponent of $v$} be the exponential growth rate of $Df$ along $v$, that is
\begin{equation}\label{lyap exp}
\lambda(x,v)=\lim_{|n|\to\infty}\frac1n\log|Df^n(x)v|
\end{equation}
in case this amount is well defined. And let $E_\lambda(x)$ be the subspace of $T_xM$ consisting of all $v$ such that the Lyapunov exponent of $v$ is $\lambda$. Then we have the following:
\begin{theorem}[V. Osedelec]
For any $C^1$ diffeomorphism $f:M\to M$ there is an $f$-invariant Borel set $\cR$ of total probability (in the sense that $\mu(\cR)=1$ for all invariant probability measures $\mu$), and for each $\eps>0$ a Borel function $C_\eps: \cR\to(1,\infty)$
such that for all $x\in \cR$, $v\in T_xM$ and $n\in\ZZ$
\begin{enumerate}
\item $T_xM=\bigoplus_\lambda E_\lambda(x)$ ({\de Oseledec's splitting})
\item For all $v\in E_\lambda(x)$
$$C_\eps(x)^{-1}exp[(\lambda-\eps)n]|v|\leq|Df^n(x)v|\leq C_\eps(x)exp[(\lambda+\eps)n]|v|$$
\item $\angle\left(E_\lambda(x),E_{\lambda'}(x)\right)\geq C_\eps(x)^{-1}$ if $\lambda\ne\lambda'$
\item $C_\eps(f(x))\leq \exp(\eps) C_\eps(x)$
\end{enumerate}
\end{theorem}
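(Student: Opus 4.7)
The plan is to prove the statement first for a single $f$-invariant probability $\mu$ and then to observe that the resulting Borel set depends only on the cocycle $Df$, so that it automatically has total probability. The analytic engine is Kingman's subadditive ergodic theorem applied to $\phi^{(k)}_n(x)=\log\|\Lambda^k Df^n(x)\|$ for each $k=1,\dots,d$, where $d=\dim M$ and $\Lambda^k$ denotes the $k$-th exterior power; the cocycle identity makes each $\phi^{(k)}_n$ subadditive in $n$, so Kingman yields $\mu$-a.e.\ limits $\chi_k(x)=\lim_n n^{-1}\phi^{(k)}_n(x)$. The distinct Lyapunov exponents $\lambda_1<\dots<\lambda_r$ and their multiplicities $m_i$ are then read off from the jumps of $k\mapsto\chi_k-\chi_{k-1}$, and the same arguments applied to $f^{-1}$ produce matching limits along the reverse orbit.

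The hardest step is building the Oseledec splitting (1). I would follow Raghunathan's polar-decomposition argument: writing $Df^n(x)=U_n(x)P_n(x)$ with $P_n$ symmetric positive definite, one studies $A_n(x)=P_n(x)^{1/n}$ and shows that $A_n(x)$ converges $\mu$-a.e.\ to a symmetric operator $A(x)$ whose $\log$-spectrum is $\{\lambda_1,\dots,\lambda_r\}$ with multiplicities $m_i$. The eigenspaces of $A(x)$ furnish a forward Lyapunov filtration $V^+_{\leq\lambda}(x)$, and repeating the construction with $f^{-1}$ gives a backward filtration $V^-_{\geq\lambda}(x)$. The Oseledec spaces are then $E_\lambda(x):=V^+_{\leq\lambda}(x)\cap V^-_{\geq\lambda}(x)$. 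The key obstacle is transversality: verifying that these intersections have the correct dimensions and together span $T_xM$. I expect to handle this by comparing $\sum_i \lambda_i\dim E_{\lambda_i}(x)$ with the top exterior growth rate $\chi_d(x)$ and noting that any failure of transversality would strictly decrease the sum, contradicting the equality.

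With the splitting at hand, (2) and (3) become essentially definitional: for each $\eps>0$ one defines $C_\eps(x)$ as the smallest constant $\geq 1$ that makes the bounds in (2) and (3) hold simultaneously for every $n\in\ZZ$, every exponent $\lambda$ and every nonzero $v\in E_\lambda(x)$. Almost-everywhere finiteness of $C_\eps(x)$ is the pointwise quantitative content of the Kingman limits combined with the transversality step, and Borel measurability follows from that of $x\mapsto E_\lambda(x)$. Property (4) is equivalent to the tempered estimate $\limsup_{|n|\to\infty}\frac{1}{n}\log C_\eps(f^n x)=0$, and follows because replacing $x$ by $f(x)$ shifts the admissible range of $n$ in (2) by one step, losing at worst a factor bounded by $e^\eps$ in the defining supremum. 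To obtain a single $\cR$ that works for every invariant measure, one observes that the Borel set where all these limits and spectral decompositions exist is defined purely in terms of $Df$ and has $\mu$-measure $1$ for each invariant $\mu$ by Kingman, hence is of total probability.
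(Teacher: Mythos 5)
The paper does not prove this statement at all: it is Oseledec's Multiplicative Ergodic Theorem, quoted in the preliminaries as a known result with pointers to \cite{katok1980}, \cite{pughshub1989}, \cite{ledrappieryoung1985} and \cite{barreira-pesin}. So there is no in-paper argument to measure yours against; what you have written is the standard Kingman--Raghunathan proof, and as a strategy it is the right one. The subadditivity of $\log\|\Lambda^k Df^n(x)\|$ together with Kingman (integrability being free since $M$ is compact and $f$ is $C^1$) does give the exponents and multiplicities, the polar decomposition gives the forward and backward filtrations, and the volume-growth identity $\sum_i m_i\lambda_i=\chi_d$ is indeed the standard mechanism forcing the intersections $V^+_{\leq\lambda}\cap V^-_{\geq\lambda}$ to have the right dimensions and to span.

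Two points deserve flagging. First, the convergence of $A_n(x)=P_n(x)^{1/n}$ is the analytic heart of the whole theorem (Raghunathan's lemma comparing the singular directions of $Df^n$ and $Df^{n+1}$); the sentence ``one \emph{shows} that $A_n(x)$ converges'' black-boxes exactly the step on which everything else rests, so as written the proposal is an outline rather than a proof. Second, your derivation of (4) does not work as stated. If $C_\eps(x)$ is the \emph{optimal} constant in (2)--(3), then passing from $x$ to $f(x)$ via $w=Df(x)v$ costs a factor of $C_\eps(x)$ twice --- once to push the estimate one step forward and once to compare $|w|$ with $|v|$ --- yielding only $C_\eps(f(x))\leq e^{2\eps}\,C_\eps(x)^2$, not $C_\eps(f(x))\leq e^{\eps}C_\eps(x)$. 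The standard repair is the tempering-kernel lemma: first prove that the optimal constant $K_{\eps/2}$ grows subexponentially along a.e.\ orbit, then define $C_\eps(x)=\sup_{n\in\ZZ}K_{\eps/2}(f^n x)\,e^{-\eps|n|/2}$, which is finite a.e.\ and satisfies (4) by construction. Relatedly, finiteness of the angle bound in (3) is not an automatic consequence of the Kingman limits; it requires the separate (standard) estimate that $\angle\bigl(E_\lambda(f^n x),E_{\lambda'}(f^n x)\bigr)$ decays subexponentially in $n$. With these repairs your sketch is the canonical proof of the statement.
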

The set $\cR$ is called the set of {\de regular points}. For simplicity, we will assume that all points in $\cR$ are Lebesgue density points.
We also have that $Df(x)E_\lambda(x)=E_\lambda(f(x))$. If
an $f$-invariant measure $\mu$ is ergodic then the Lyapunov exponents and $\dim E_\lambda(x)$ are
constant $\mu$-a.e.\par
For fixed $\ep > 0$ and given $l > 0,$ we define the {\de Pesin blocks}:
 $$
  \PB = \left\{ x \in \mathcal{R} : C_\eps(x) \leq l\right\}.
 $$
Note that Pesin blocks are not necessarily invariant. However $ f(\PB) \subset \cR_{ \eps,\exp(\eps)l}$.
Also, for each $\eps>0$, we have
 \begin{equation}\label{pesin region}
\cR = \bigcup_{ l= 1}^{\infty} \PB
\end{equation}
We loose no generality in assuming that $\PB$ are compact.
For all $x\in \cR$ we have
$$
T_xM=\bigoplus_{\lambda<0} E_\lambda(x)\oplus E^0(x)\bigoplus_{\lambda>0} E_\lambda(x)
$$
where $E^0(x)$ is the subspace generated by the vectors having zero Lyapunov exponents. Let $\mu$ be an invariant measure. When $E^0(x)=\{0\}$ for all $\mu$-a.e. $x$ in a set $N$, then we say that $f$ is {\de non-uniformly hyperbolic} on $N$ and that $\mu$ is a {\de hyperbolic measure} on $N$.\par
Now, let us assume that $f\in C^{1+\alpha}$ for some $\alpha>0$. Given a regular point $x$, we define its
{\de stable Pesin manifold} by
\begin{equation}\label{stable Pesin manifold}
\PW^s(x)=\left\{y:\limsup_{n\to +\infty}
\frac{1}{n}\log  d(f^n(x),f^n(y))<0\right\}
\end{equation}
The {\de unstable Pesin manifold} of $x$, $\PW^u(x)$ is the stable Pesin manifold of $x$ with respect to $f^{-1}$.
Stable and unstable Pesin manifolds of points in $\cR$ are immersed manifolds \cite{pesin1977}. We stress that $C^{1+\alpha}$ regularity is crucial for this to happen. In this way we obtain a partition
 $x\mapsto\PW^s(x)$ , which we call {\de stable partition}. Unstable partition is defined analogously. Stable and unstable partitions are invariant.\par
On the Pesin blocks we have a continuous variation: Let us call $W^s_{loc}(x)$ the connected component of $W^s(x)\cap B_r(x)$ containing $x$, where $B_r(x)$ denotes the Riemannian ball of center $x$ and radius $r>0$, which is sufficiently small but fixed. Then
\begin{theorem}
[Stable Pesin Manifold Theorem \cite{pesin1977}]
Let $f:M\to M$ be a $C^{1+\alpha}$ diffeomorphism preserving a smooth measure $m$. Then, for each $l>1$ and small $\eps>0$, if $x\in \cR_{\eps,l}$:
\begin{enumerate}
\item $W^s_{loc}(x)$ is a disk such that $T_xW^s_{loc}(x)=\bigoplus_{\lambda<0} E_\lambda(x)$
\item $x\mapsto\PW^s_{loc}(x)$ is continuous over $\PB$ in the $C^1$ topology
\end{enumerate}
\end{theorem}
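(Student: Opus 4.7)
The plan is to prove this by the graph-transform (Hadamard–Perron) method in Lyapunov charts along the orbit of $x$. The key point is that, after passing to an adapted Lyapunov inner product and suitably rescaling, the non-uniformly hyperbolic dynamics along the orbit of a regular point look uniformly hyperbolic with an arbitrarily small nonlinear perturbation; this places the problem in the framework of the classical stable manifold theorem for sequences of uniformly hyperbolic diffeomorphisms.

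For $x\in\cR$ I would introduce the Lyapunov inner product on $T_xM$, making the Oseledec subspaces $E_\lambda(x)$ orthogonal and defining, for $v\in E_\lambda(x)$,
\[
|v|'_x \;=\; \Bigl(\sum_{n\in\ZZ} e^{-2n\lambda-2\eps|n|}\,|Df^n(x)v|^2\Bigr)^{1/2}.
\]
With respect to $|\cdot|'$, $Df(x)$ scales each $E_\lambda(x)$ by a factor in $(e^{\lambda-\eps},e^{\lambda+\eps})$ uniformly in $x$, so $E^s(x):=\bigoplus_{\lambda<0}E_\lambda(x)$ is uniformly contracted and dominates its Oseledec complement $E^{cu}(x)$. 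The ratio between $|\cdot|'_x$ and the Riemannian norm $|\cdot|_x$ is bounded by a multiple of $C_\eps(x)$, hence uniformly equivalent to it on the Pesin block $\PB$.

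Next I would work in Lyapunov charts $\Phi_n := \exp_{f^n(x)}$ of radius $r_n \asymp C_\eps(f^n(x))^{-1}$ and consider the lifted sequence $\widetilde f_n := \Phi_{n+1}^{-1}\circ f \circ \Phi_n$. Each derivative $D\widetilde f_n(0) = Df(f^n(x))$ is uniformly hyperbolic in Lyapunov norm; the choice of $r_n$, together with the $C^{1+\alpha}$-Hölder estimate on $Df$, forces the nonlinear remainder to be Lipschitz with constant of order $\eps$. On the space of sequences of Lipschitz graphs $\varphi_n : E^s(f^n(x))\cap B(0,r_n) \to E^{cu}(f^n(x))$ with $\varphi_n(0)=0$ and slope $\leq 1/2$, the graph transform that pulls graphs back under $\widetilde f_n$ is a contraction, and its fixed point at $n=0$ is the graph of $W^s_{loc}(x)$ over $E^s(x)$. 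A parallel graph-transform argument on tangent directions, together with the Hölder continuity of $Df$, identifies $T_xW^s_{loc}(x)=E^s(x)$ and upgrades the disk to $C^{1+\alpha}$, yielding item (1).

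For item (2), note that on the compact block $\PB$ the Oseledec splitting $E^s\oplus E^{cu}$ has angles bounded below by a function of $l$, the constant $C_\eps$ is bounded above by $l$, and the tempered estimate $C_\eps(f^n(x))\leq l\,e^{\eps|n|}$ controls the sizes $r_n$ along the whole forward orbit. Consequently the Lyapunov charts, the hyperbolic rates, and the contraction constants of the graph transform all depend continuously on $x\in\PB$, while the transform remains a uniform contraction. A standard parametric fixed-point argument then yields $C^0$ continuity of $x\mapsto W^s_{loc}(x)$, and applying the same reasoning to the induced transform on tangent spaces upgrades this to $C^1$ continuity. The main obstacle, as is customary in Pesin theory, is precisely this last step: one must verify that on $\PB$ everything (splittings, charts, contraction rates, domain sizes) is not merely bounded but genuinely continuous, which forces one to exploit the compactness of $\PB$ together with the uniform Oseledec-type estimates of the theorem stated immediately before.
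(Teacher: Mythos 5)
This is a background theorem that the paper quotes from Pesin's 1977 work and does not prove, so there is no internal argument to compare against; I can only assess your sketch against the standard proof in the literature (Pesin; Pugh--Shub, \emph{Ergodic attractors}; Barreira--Pesin). Your outline is exactly that standard route --- Lyapunov inner product, Lyapunov charts along the orbit, Hadamard--Perron graph transform for a sequence of maps that are uniformly hyperbolic plus a small Lipschitz perturbation, with the $C^{1+\alpha}$ hypothesis entering to control the nonlinear remainder on charts of size decaying like a power of $C_\eps(f^n(x))^{-1}$ --- and it is correct in structure. Two technical points deserve repair. First, the series you write for the Lyapunov norm does not converge as stated: with the Oseledec bound $|Df^n(x)v|\leq C_\eps(x)e^{(\lambda+\eps)n}|v|$, the terms $e^{-2n\lambda-2\eps|n|}|Df^n(x)v|^2$ are merely bounded for $n\to+\infty$; one must weight by $e^{-2n\lambda-2\eps'|n|}$ with $\eps'>\eps$ (customarily $2\eps$) to get summability, after which the norm has the properties you use.

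Second, and more substantively, your treatment of item (2) asserts that ``the Lyapunov charts, the hyperbolic rates, and the contraction constants \dots all depend continuously on $x\in\PB$.'' That is too strong: $C_\eps$ is only a Borel function, bounded by $l$ on the block, and nothing forces it --- or the chart radii and Lyapunov norms built from it --- to vary continuously there. What the standard argument actually uses is weaker and suffices: (i) the Oseledec splitting $x\mapsto\bigoplus_{\lambda<0}E_\lambda(x)$ \emph{is} continuous on $\PB$ (proved by passing to the limit in the defining inequalities with the uniform constant $l$, after refining $\PB$ so the dimensions are constant); (ii) all estimates (size of the disk, Lipschitz/H\"older bounds on the graphs and their tangent planes) are \emph{uniform} over $\PB$; and (iii) if $x_k\to x$ in $\PB$, the graphs $W^s_{loc}(x_k)$ form a precompact family in the $C^1$ topology by (ii), and any $C^1$ limit is contained in $W^s(x)$ by the asymptotic characterization (\ref{stable Pesin manifold}), is tangent to $\bigoplus_{\lambda<0}E_\lambda(x)$ by (i), and hence coincides with $W^s_{loc}(x)$ by uniqueness of the graph-transform fixed point. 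So continuity is extracted from uniform bounds plus compactness plus uniqueness, not from continuity of the chart data; with that substitution your argument goes through.
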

In particular, the dimension of the disk $W^s_{loc}(x)$ equals the number of negative Lyapunov exponents of $x$. An analogous statement holds for the unstable Pesin manifold.
\subsection{Absolute continuity}
An important notion behind the criterion we are going to prove is absolute continuity. Let us state the definitions we will be using. The point of view we follow is similar to that in \cite{ledrappieryoung1985}.\par
Let $\xi$ be a partition of the manifold $M$. We shall call $\xi$ a {\de measurable partition} if the quotient space $M/\xi$ is separated by a countable number of measurable sets. For instance, the partition of the 2-torus by lines of irrational slope is not measurable, while the partition of $[0,1]$ by singletons is measurable. The quotient space $M/\xi$ of a Lebesgue space $M$ by a measurable partition $\xi$ is again a Lebesgue space \cite{rohlin}.\par
Associated to each measurable partition $\xi$ of a Lebesgue space $(M,{\mathcal B},m)$ there is a canonical system of {\de conditional measures} $m_x^\xi$, which are Lebesgue measures on $\xi(x)$, the element of $\xi$ containing $x$, and with the property that for each $A\in{\mathcal B}$ the set $A\cap\xi(x)$ is measurable in $\xi(x)$ for almost all $\xi(x)$ in $M/\xi$, and the function $x\mapsto m_x^\xi(A\cap\xi(x))$ is measurable, with:
\begin{equation}\label{ecuacion.measurable.partition}m(A)=\int_{M/\xi}m_x^\xi(A\cap\xi(x))dm_T\end{equation}
where $m_T$ is the quotient measure on $M/\xi$. For each measurable partition this canonical system of conditional measures is unique (mod $0$), i.e. any other system is the same for almost all $\xi(x)\in M/\xi$. Conversely, if there is a canonical system for a partition, then the partition is measurable. In our case, we will be interested in stable and unstable partitions, note that in general these partitions are not measurable.\par
A measurable partition $\xi$ is {\de subordinate} to the unstable partition $W^u$ if for $m$-a.e. $x$ we have $\xi(x)\subset W^u(x)$, and $\xi(x)$ contains a neighborhood of $x$ which is open in the topology of $W^u(x)$.
\begin{definition}\label{fubini.like}
  $m$ has {\de absolutely continuous conditional measures on unstable manifolds} if for every measurable partition $\xi$ subordinate to $W^u$, $m^\xi_x<<\lambda^u_x$ for $m$-a.e. $x$, where $\lambda^u_x$ is the Riemannian measure on $W^u(x)$ given by the Riemannian structure of $W^u(x)$ inherited from $M$.
\end{definition}
Now, take a point $x_0\in \cR$, the set of regular points. Assume that $x_0$ has at least a negative Lyapunov exponent. Take two small disks $T$ and $T'$ near $x_0$ which are transverse to $W^s(x_0)$.
Then we can define the {\de holonomy map} with respect to these transversals as a map $h$ defined on a subset of $T$ such that $h(x)=W^s_{loc}(x)\cap T'$.
The domain of $h$ consists of the points $x\in T\cap \cR$ whose unstable manifold have the same dimension as $W^s(x_0)$, and which transversely intersect $T$ and $T'$. $h$ is a bijection.
\begin{definition}\label{absolute.continuity}
We say that the unstable partition is {\de absolutely continuous} if all holonomy maps are measurable and take Lebesgue zero sets of $T$ into Lebesgue zero sets of $T'$.
\end{definition}
Absolute continuity of the stable partition is defined analogously.
\begin{theorem}[Ya. Pesin \cite{pesin1977}, C. Pugh, M. Shub \cite{pughshub1989}]\label{teorema.cont.abs.Pesin}
The stable and unstable partitions are absolutely continuous.
\end{theorem}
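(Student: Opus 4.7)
The plan is to prove absolute continuity of the holonomy map $h$ along local stable manifolds by showing that its Jacobian relative to the transverse Riemannian measures is bounded above and below by positive constants; the proof for the unstable partition is symmetric. I would first fix a Pesin block $\PB$ on which, by the Stable Pesin Manifold Theorem, $x\mapsto W^s_{loc}(x)$ is continuous in the $C^1$ topology. Restricting to $T\cap\PB$ and shrinking the transversals $T,T'$ accordingly, the family $\{W^s_{loc}(x)\}_{x\in T\cap\PB}$ defines a continuous lamination by $C^1$ disks, so the holonomy $h:T\cap\PB\to T'$ is well-defined and continuous on a compact set.

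The central estimate is of graph-transform type. For $x\in T\cap\PB$ and $y=h(x)$, iterating forward yields $d(f^n x, f^n y)\le l\, e^{(\lambda+\eps)n}d(x,y)$ for some $\lambda<0$ coming from the negative Lyapunov exponents on the block. Writing $J_n^T(x)=|\det Df^n|_{T_x T}|$ and analogously $J_n^{T'}(y)$, the $C^{1+\alpha}$ hypothesis yields a Hölder distortion bound
$$
\bigl|\log J_1(Df|_{T_{f^n x}f^n T})-\log J_1(Df|_{T_{f^n y}f^n T'})\bigr|\le K\bigl(d(f^n x,f^n y)^{\alpha}+\angle(T_{f^n x}f^n T,T_{f^n y}f^n T')^{\alpha}\bigr).
$$
An invariant-section (Hirsch--Pugh--Shub) argument on the Pesin block forces the tangent planes of $f^n T$ and $f^n T'$ to align exponentially with the unstable Oseledec direction along the common stable leaf, so both terms decay geometrically. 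Summing over $n$ produces
$$
\Bigl|\log\frac{J_n^T(x)}{J_n^{T'}(h(x))}\Bigr|\le K',
$$
with $K'$ depending only on $\PB$. Passing to the limit defines
$$
\operatorname{Jac}(h)(x)=\lim_{n\to\infty}\frac{J_n^T(x)}{J_n^{T'}(h(x))},
$$
which is bounded and bounded away from zero on $T\cap\PB$. Hence $h$ carries Lebesgue-null subsets of $T\cap\PB$ to Lebesgue-null subsets of $T'$, and exhausting $\cR$ by the countable family $\{\PB\}_{l\ge 1}$ from \eqref{pesin region} gives absolute continuity in the sense of Definition~\ref{absolute.continuity}.

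The hardest step is controlling the non-uniformity: the Oseledec constants $C_\eps(x)$ blow up outside $\PB$, so the Jacobian-distortion sum has no chance of converging globally. The whole argument must be carried out block-by-block, exploiting $C_\eps(f^k x)\le e^{\eps k}C_\eps(x)$ to keep the iterates under effective control and choosing $\eps$ small compared to the spectral gap of the Pesin splitting. A subtle technical point is that the images $f^n(T)$ and $f^n(T')$ are \emph{not} in general inside a uniform block, so one must set up the comparison at the level of tangent spaces rather than by pulling back via the section theorem. The $C^{1+\alpha}$ regularity is essential: in merely $C^1$ class the Hölder distortion bound breaks down and absolute continuity is known to fail.
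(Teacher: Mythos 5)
This theorem is not proved in the paper: it is quoted as a known result of Pesin \cite{pesin1977} and Pugh--Shub \cite{pughshub1989}, so there is no internal proof to compare against. What you have written is a sketch of the standard argument from those references (block-by-block reduction, graph-transform alignment of the pushed-forward transversals, H\"older distortion sums, Jacobian of the holonomy as a limit of ratios of Jacobians of $f^n$ restricted to the transversals), and the strategy is the correct one; the remarks about choosing $\eps$ small relative to the spectral gap and about the tempered growth $C_\eps(f^k x)\le e^{\eps k}C_\eps(x)$ are exactly the points where the nonuniform version differs from the Anosov case.

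One step is compressed to the point of being a gap as written: from ``$\bigl|\log\bigl(J_n^T(x)/J_n^{T'}(h(x))\bigr)\bigr|\le K'$ uniformly in $n$'' you jump to ``hence $h$ carries null sets to null sets.'' A pointwise bound on a limit of Jacobian ratios does not by itself control $m_{T'}(h(A))$; you must close the loop at time $n$ by comparing the \emph{measures of the images}: for a small ball $B\subset T$, write $m_T(B)=\int_{f^n(B)}J(f^{-n}|_{f^nT})\,dm_{f^nT}$ and likewise for $h(B\cap\PB)\subset T'$, observe that $f^n(h(B\cap\PB))$ lies within an $l e^{(\lambda+\eps)n}$-neighborhood of $f^n(B)$ inside $f^nT'$ (so their leafwise measures are comparable up to a factor tending to $1$), invoke bounded distortion of $J(f^{-n})$ over $f^n(B)$, and only then conclude $m_{T'}(h(B\cap\PB))\le C\,m_T(B)$ with $C$ uniform on the block; a Vitali covering argument finishes. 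This is precisely the content of the Pugh--Shub proof, and your sketch should make it explicit, since it is where your ``subtle technical point'' (that $f^n(T)$ and $f^n(T')$ need not lie in a uniform block) actually bites. With that supplied, the argument is the standard one and is correct.
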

Note that the holonomy maps are continuous when restricted to the Pesin blocks $\PB$. Also, Theorem \ref{teorema.cont.abs.Pesin} implies there are measurable partitions subordinate to $W^s$ and $W^u$ for which a Fubini-like property like (\ref{ecuacion.measurable.partition}) applies.
\subsection{Partial hyperbolicity and dominated splitting} \label{subsection.partialdominated}
Let $M$ be a compact Riemannian manifold. A diffeomorphism $f:M\to M$ is {\de partially hyperbolic} on an $f$-invariant set $\Lambda$
if it admits a non-trivial $Df$-invariant splitting of
the tangent bundle $T_\Lambda M = E^s\oplus E^c \oplus E^u$, such that all
unit vectors $v^\s\in E^\s_x$ ($\s= s, c, u$) with $x\in \Lambda$ verify:

$$\|Df(x)v^s\| < \|Df(x)v^c\| < \|Df(x)v^u\| $$

for some suitable Riemannian metric. We require that both $E^s$ and $E^u$ be non trivial. We also require that
$$\|Df|_{E^s}\| < 1\qquad\mbox{and}\qquad\|Df^{-1}|_{E^u}\| < 1$$
We say that $f$ is partially hyperbolic if $\Lambda=M$. If $E^c$ is the trivial bundle on $\Lambda$, we say that $\Lambda$ is a {\de hyperbolic set}. If $\Lambda$ is hyperbolic and $f$ is transitive on $\Lambda$, we call $\dim E^s$ the {\de stable index} of $\Lambda$, and $\dim E^u$ the {\de unstable index} of $\Lambda$. \par
 It is a known fact that there are foliations $\W^{ss}$ and $\W^{uu}$ tangent to the distributions $E^s$ and $E^u$ respectively (see for instance \cite{bp}). A set $X$ will be
called {\de $s$-saturated} if it is a union of leaves of $\W^{ss}$. {\de $u$-saturated} sets are defined analogously.
The {\de accessibility class} of the point $x\in
M$ is the minimal set containing $x$ such that it is both $s$- and $u$-saturated. The
diffeomorphism $f$ has the {\de accessibility property} if the
accessibility class of some $x$ is $M$. It has the {\de essential
accessibility property} if every measurable set which is both $s$-saturated and
$u$-saturated has null or full measure. Obviously, accessibility property implies essential accessibility property.\par
A diffeomorphism $f$ is {\de stably accessible} if there is a $C^1$-neighborhood of $f$ composed by accessible diffeomorphisms.
\begin{theorem}[D. Dolgopyat, A. Wilkinson \cite{dolgopyat-wilkinson2003}]\label{teorema.stable.accessibility} Stable accessibility is $C^1$-dense among partially hyperbolic diffeomorphisms.
\end{theorem}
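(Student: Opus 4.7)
The plan is to follow the approach of Dolgopyat--Wilkinson: given a partially hyperbolic $f$, I produce a $C^1$--small perturbation $g$ for which the accessibility class of some fixed point has non--empty interior (hence, by saturation, equals $M$), and then observe that this property is $C^1$--open. Together these give stable accessibility in every $C^1$--neighborhood of $f$, which is the density statement.

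Fix $p \in M$ and a small chart at $p$ in which the local leaves of $\W^{ss}$ and $\W^{uu}$, together with a smooth $c$--dimensional transversal $\Sigma_p$ to $E^s\oplus E^u$, form a product structure. An \emph{$su$--cycle at $p$} is a concatenation $\gamma$ of finitely many arcs, alternately lying in $\W^{ss}$-- and $\W^{uu}$--leaves, starting at $p$ and ending at a point $q(\gamma)\in\Sigma_p$; write $\rho(\gamma)\in E^c_p$ for the coordinate of $q(\gamma)$ in the chart. The accessibility class of $p$ is by definition the set of endpoints of such $\gamma$, so if the image of $\rho$ contains a neighborhood of $0\in E^c_p$ then the accessibility class contains a neighborhood of $p$. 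In the partially hyperbolic setting this local fullness propagates, via the $\W^{ss}$-- and $\W^{uu}$--saturation, to accessibility on an open dense set and, combined with $C^1$--openness, to global accessibility. So the problem reduces to: by $C^1$--small perturbation, force $\rho$ to be locally surjective onto $E^c_p$.

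For each vector $v_i$ in a chosen basis $v_1,\ldots,v_c$ of $E^c_p$, I construct an $su$--cycle $\gamma_i$ at $p$ passing through a small ball $B_i\subset M$, where the balls $B_i$ are pairwise disjoint, disjoint from $p$ and from the relevant iterates of $p$. Then I apply a Hayashi--type $C^1$--small bump perturbation $\Phi_i$ supported in $B_i$ that, inside $B_i$, shifts the local $\W^{ss}$--plate (or $\W^{uu}$--plate) through a chosen base point by a small vector approximately aligned with $v_i$. Setting $g := \Phi_c\circ\cdots\circ\Phi_1\circ f$, which is $C^1$--close to $f$ and still partially hyperbolic if the bumps are small enough, and propagating each local shift along the remaining arcs of $\gamma_i$ using the (continuously varying) holonomies of $\W^{ss}_g$ and $\W^{uu}_g$, gives $\rho_g(\gamma_i) - \rho_f(\gamma_i)$ close to a positive multiple of $v_i$. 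An approximate additivity of $\rho_g$ under concatenation of $su$--cycles then lets finite sums of the $\gamma_i$ realize every vector in a neighborhood of $0\in E^c_p$, so $\rho_g$ is locally surjective as required.

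For openness, the stable and unstable foliations, their local holonomies and the map $\rho_{(\cdot)}$ all depend $C^1$--continuously on the diffeomorphism over compact pieces, so local surjectivity of $\rho_g$ is a $C^1$--open condition on $g$; hence $g$ is stably accessible. The main obstacle in this plan is the quantitative control of Step~3: a $C^1$--small bump produces only a $C^0$--small displacement, which must then survive propagation along a potentially long $su$--cycle under slightly perturbed holonomies without the chosen direction $v_i$ being swallowed by accumulated error terms or collapsed by unfortunate compositions of stable and unstable holonomies. Handling this requires choosing the base points inside each $B_i$ at generic locations, so that the compositional differential of the holonomies along $\gamma_i$ acts injectively on the local translation direction, and then estimating carefully the transport of the bump through the remaining arcs. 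This directional control, which is precisely the technical heart of Dolgopyat--Wilkinson, is the only place where deep work is required; once it is in hand, the remaining steps are standard.
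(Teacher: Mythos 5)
The paper does not prove this statement: it is imported verbatim from Dolgopyat--Wilkinson \cite{dolgopyat-wilkinson2003}, so the only meaningful comparison is with their original argument. Measured against that, your sketch reproduces the right ingredients ($su$-cycles returning to a center transversal, bump perturbations supported in disjoint balls along the cycle, independence of the induced center displacements), but it has a structural gap at the very first reduction. You argue: make the accessibility class of one point $p$ open near $p$, then ``this local fullness propagates, via saturation, to accessibility on an open dense set and, combined with $C^1$-openness, to global accessibility.'' That inference fails. Accessibility classes partition $M$; making $AC(p)$ open (even open and dense) says nothing about the classes of points in the complement, and the accessibility property requires a \emph{single} class to equal $M$. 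The correct mechanism, and the reason the Dolgopyat--Wilkinson construction is global rather than local, is that one must arrange for \emph{every} accessibility class to contain an open set; then every class is open (the $su$-saturation of an open set is open), and connectedness of $M$ forces there to be exactly one class. Achieving this uniformly over all base points is what drives their multi-scale grid of perturbation supports and the combinatorial selection of which balls to perturb; a construction anchored at a single $p$ cannot yield it.

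Two further points are weaker than you present them. First, the quantitative step you defer is not merely ``careful estimation'': a $C^1$-perturbation of size $\eps$ supported in a ball of radius $r$ displaces points by at most order $\eps r$, so to obtain a center displacement of definite size at the scale of the chart at $p$ you need supports of comparable size, which conflicts with keeping them disjoint from the $su$-legs and from each other; resolving this tension is the content of the theorem, not a routine afterthought. Second, ``local surjectivity of $\rho_g$ is a $C^1$-open condition'' is not automatic: surjectivity onto a neighborhood is not stable under uniform perturbation of a continuous map unless one has topological degree (or an analogous robust covering property) for the $c$-parameter family of $su$-cycles, and the holonomies involved are in general only continuous/H\"older, not $C^1$, in the transverse direction. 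Dolgopyat--Wilkinson secure openness precisely through such a degree-type argument with uniformly bounded path lengths; without it your final step does not close.
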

Accessibility has the following interesting consequence:
\begin{theorem}[K. Burns, D. Dolgopyat, Ya. Pesin \cite{burns-dolgopyat-pesin2002}]\label{teorema.accesible.orbita.densa}
Let $f$ be a partially hyperbolic diffeomorphism preserving a smooth measure, such that $f$ has the accessibility property. Then for each $\eps>0$ there is a $C^1$-neighborhood $\cU$ such that for all $g$ in $\cU$, a.e. orbit is $\eps$-dense. This means, for each $\eps$-ball $B$ a.e. orbit enters the $B$.
\end{theorem}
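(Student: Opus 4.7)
The plan is to fix $\eps > 0$ and reduce to a single ball: cover $M$ by finitely many open balls $B_1,\dots,B_N$ of radius $\eps/2$, and observe that an orbit is $\eps$-dense provided it meets each $B_j$. By taking a finite intersection of $C^1$-neighborhoods, it then suffices to show that for every open ball $B \subset M$ there is a $C^1$-neighborhood $\cU_B$ of $f$ such that, for every $g \in \cU_B$ (implicitly $C^{1+\alpha}$ and preserving a smooth measure $m$), $m$-a.e.\ $g$-orbit meets $B$. Fix such a $B$ and consider the Birkhoff average
\begin{equation*}
\varphi_B(x) \;=\; \lim_{N\to\infty}\frac{1}{N}\sum_{k=0}^{N-1}\chi_B(g^k x),
\end{equation*}
which exists $m$-a.e., is $g$-invariant, and satisfies $\int\varphi_B\,dm = m(B) > 0$. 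It is enough to prove $\varphi_B > 0$ $m$-a.e.

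Next I carry out the Hopf step for $g$. Since $\chi_B$ is continuous off the measure-zero set $\partial B$ and since orbits on a common leaf of $\W^{ss}(g)$ (resp.\ $\W^{uu}(g)$) are forward (resp.\ backward) asymptotic, a standard argument shows that $\varphi_B$ is a.e.\ constant along leaves of $\W^{ss}(g)$ and along leaves of $\W^{uu}(g)$. The $C^{1+\alpha}$ hypothesis guarantees that both strong foliations are absolutely continuous (Theorem \ref{teorema.cont.abs.Pesin} applied to the uniformly hyperbolic sub-bundles), so by the usual Fubini-type argument there is a measurable version $\tilde\varphi_B$ of $\varphi_B$ that is \emph{everywhere} constant along the leaves of both $\W^{ss}(g)$ and $\W^{uu}(g)$; in particular $\tilde\varphi_B$ is constant along every accessibility class of $g$.

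Now comes the use of the accessibility of $f$. By accessibility and compactness of $M$, for any small $\delta > 0$ there exist finitely many $su$-paths $\gamma_1,\dots,\gamma_{N(\delta)}$ of $f$, with a uniform bound on the number of legs and on the length of each leg, such that from any $x \in M$ one of these paths reaches within $\delta$ of any prescribed point of $M$; in particular one can reach $B$. The strong stable and unstable manifolds vary continuously in the $C^0$-topology on compact pieces under $C^1$-perturbations of the diffeomorphism, so for every $g$ sufficiently $C^1$-close to $f$ the analogous $g$-$su$-paths exist and land within, say, $\eps/4$ of the corresponding $f$-endpoints. Consequently the accessibility class of every point of $M$ under $g$ is $\eps/2$-dense in $M$, and meets $B$.

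Combining the last two paragraphs: $\tilde\varphi_B$ is constant along accessibility classes of $g$, and these classes are $\eps/2$-dense; concatenating the uniformly finitely many $su$-holonomies that implement the robust accessibility and using absolute continuity at each step, one shows that $\tilde\varphi_B$ is essentially constant, hence equal to $\int\varphi_B\,dm = m(B) > 0$ a.e. Therefore $\varphi_B > 0$ a.e., so a.e.\ $g$-orbit meets $B$, as required. The main obstacle, and the step that truly uses the hypothesis ``$f$ accessible'' rather than ``$g$ accessible'', is the quantitative persistence of accessibility in the fourth paragraph: one must extract from the qualitative accessibility of $f$ a uniform finite family of $su$-paths whose topological behavior survives a $C^1$-perturbation, so that the Hopf-type saturation argument can be run simultaneously for every $g \in \cU_B$.
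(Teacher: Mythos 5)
First, a point of reference: the paper does not prove this statement at all --- it is quoted from \cite{burns-dolgopyat-pesin2002}, and the only internal indications of its proof are Remarks \ref{remark.accessibility.dense} and \ref{remark.eps.accessibility}, which encode exactly the two-step structure you follow (a compactness argument making the $\eps$-density of accessibility classes survive $C^1$-perturbation, then the implication ``a.e.\ accessibility class $\eps$-dense $\Rightarrow$ a.e.\ orbit $\eps$-dense''). Your reduction to a single ball, the introduction of $\varphi_B$, and your fourth paragraph on the quantitative persistence of accessibility (finitely many $su$-paths with a uniform bound on the number and length of legs, endpoints moving continuously with the map) are all sound and are indeed where the hypothesis ``$f$ accessible'' is consumed.

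The genuine gap is in your third paragraph: the claim that ``by the usual Fubini-type argument there is a measurable version $\tilde\varphi_B$ of $\varphi_B$ that is \emph{everywhere} constant along the leaves of both $\W^{ss}(g)$ and $\W^{uu}(g)$; in particular $\tilde\varphi_B$ is constant along every accessibility class.'' This is not a routine consequence of absolute continuity; it is the central difficulty of the whole Pugh--Shub program. The Hopf argument produces one version $h_s$ of $\varphi_B$ constant on $s$-leaves and a \emph{different} version $h_u$ constant on $u$-leaves, agreeing only off a null set $N$; to propagate a value along the specific $su$-path that accessibility hands you, you must evaluate at the corners of that path, and absolute continuity only guarantees that $N$ meets \emph{almost every} leaf in a leaf-null set --- it says nothing about the particular corner points you need. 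If the step you assert were available, then applying it to $\varphi^+$ for an arbitrary continuous $\varphi$ would show that essential accessibility alone implies ergodicity for every $C^{1+\alpha}$ volume-preserving partially hyperbolic diffeomorphism, with no bunching hypothesis --- precisely the open Pugh--Shub ergodicity conjecture, and precisely the obstruction that forces the julienne/center-bunching machinery of \cite{pughshub2000,burns-wilkinson2006}. Your closing paragraph (``concatenating the uniformly finitely many $su$-holonomies and using absolute continuity at each step'') runs into the same corner problem. The published proof of the orbit-density statement does not manufacture a bi-saturated version of a Birkhoff average; it exploits the extra leverage that the target set $\bigcup_{n}g^{n}(B)$ is \emph{open} and invariant, combining Poincar\'e recurrence with a chain of slightly nested balls so that the saturation one needs at each leg holds pointwise on the whole leaf rather than only almost everywhere. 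You should rebuild the last two paragraphs around that mechanism (see \cite{burns-dolgopyat-pesin2002} and \cite{rru2}) rather than around $\tilde\varphi_B$.
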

\begin{remark}\label{remark.accessibility.dense}
If $f$ has the essential accessibility property, then almost every orbit is dense \cite{burns-dolgopyat-pesin2002}. In fact, if for $m$-almost every point its accessibility class is $\eps$-dense, then $m$-almost every orbit is $\eps$-dense. See also \cite{rru2}.
\end{remark}
\begin{remark}\label{remark.eps.accessibility}
If $f$ has the accessibility property, then for every $\eps>0$ there exists a $C^1$-neighborhood $\cU$ of $f$ such that if $g\in\cU$, then all accessibility classes of $g$ are $\eps$-dense.
\end{remark}
The leaf of $\W^\s$ containing $x$ will be called $W^\s(x)$, for
$\s=ss,uu$. The connected component containing $x$ of the intersection
of $W^{ss}(x)$ with a small $\eps$-ball centered at $x$ is the {\de
$\eps$-local stable manifold of $x$}, and is denoted by
$W^{ss}_\eps(x)$. $W^{uu}_\eps(x)$ is defined analogously.
\par
Here is another way to relax the hyperbolicity condition:
Let $f:M\to M$ be a diffeomorphism, and $K$ be an $f$-invariant set. Let $E$ and $F$ be two $Df$-invariant bundles over $K$. We say that $E\oplus F$ is a {\de $k$-dominated splitting} over $K$ if all
unit vectors $u\in E_x$ and $v\in F_x$ with $x\in K$ satisfy:
$$\|Df^k(x)u\|\leq\frac12\|Df^k(x)v\|$$
We say that $E\oplus F$ is a {\de dominated splitting} if there is $k>0$ such that $E\oplus F$ is a $k$-dominated splitting. We also say that $F$ {\de dominates} $E$.
\subsection{Blenders}\label{subsection.blenders}
%
A blender is an open set associated to a hyperbolic periodic point with some $C^1$-persistent properties, that was introduced by C. Bonatti and L. D\'{\i}az to obtain new examples of robustly transitive diffeomorphisms \cite{bodi1996}.\par
Roughly speaking,  a $cu$-blender is an open set associated to a partially hyperbolic periodic point $p$, with expanding one-dimensional center bundle, on which a convenient projection of the stable set of $p$ has topological dimension $(s+1)$, where $s$ is the dimension of $W^s(p)$, see Figure \ref{figure.blender}.\par
To be more precise, let $p$ be a partially hyperbolic periodic point such that $Df$ is expanding on $E^c$ and $\dim E^c=1$. A small open set ${\rm Bl}^{cu}(p)$ (not necessarily containing $p$) is a {\de $cu$-blender} associated to $p$
if
\begin{enumerate}
\item every {\de $(u+1)$-strip well placed in ${\rm Bl}^{cu}(p)$} transversely intersects $W^s(p)$.
\item This property is $C^1$-robust. Moreover, the open set associated to the periodic point contains a uniformly sized ball.
\end{enumerate}

 A {\de $(u+1)$-strip} is any $(u+1)$-disk containing a $u$-disk $D^{uu}$, so that $D^{uu}$ is centered at a point in ${\rm Bl}^{cu}(p)$, the radius of $D^{uu}$ is much bigger than the radius of ${\rm Bl}^{cu}(p)$, and $D^{uu}$ is almost tangent to $E^u$, i.e. the tangent vectors are $C^1$-close to $E^u$. A $(u+1)$-strip $D^{cu}$ is {\de well placed} in ${\rm Bl}^{cu}(p)$ if it is almost tangent to $E^c\oplus E^u$.\par
 Naturally, it makes sense to talk about robustness of these properties and concepts, since there is an analytic continuation of the periodic point $p$ and of the bundles $E^s$, $E^c$ and $E^u$. We can define $cs$-blenders in a similar way. For $cs$-blenders we will consider a partially hyperbolic point such that $E^c$ is one-dimensional and contracting.\par
 \begin{figure}[h]
 \begin{minipage}{6.5cm}
 \psfrag{s}{$W^s(p)$}\psfrag{p}{$p$}\psfrag{u}{$W^u(p)$}\psfrag{c}{$W^c(p)$}\psfrag{b}{${\rm Bl}^{cu}(p)$}
 \includegraphics[bb=88 308 461 592, width=6cm]{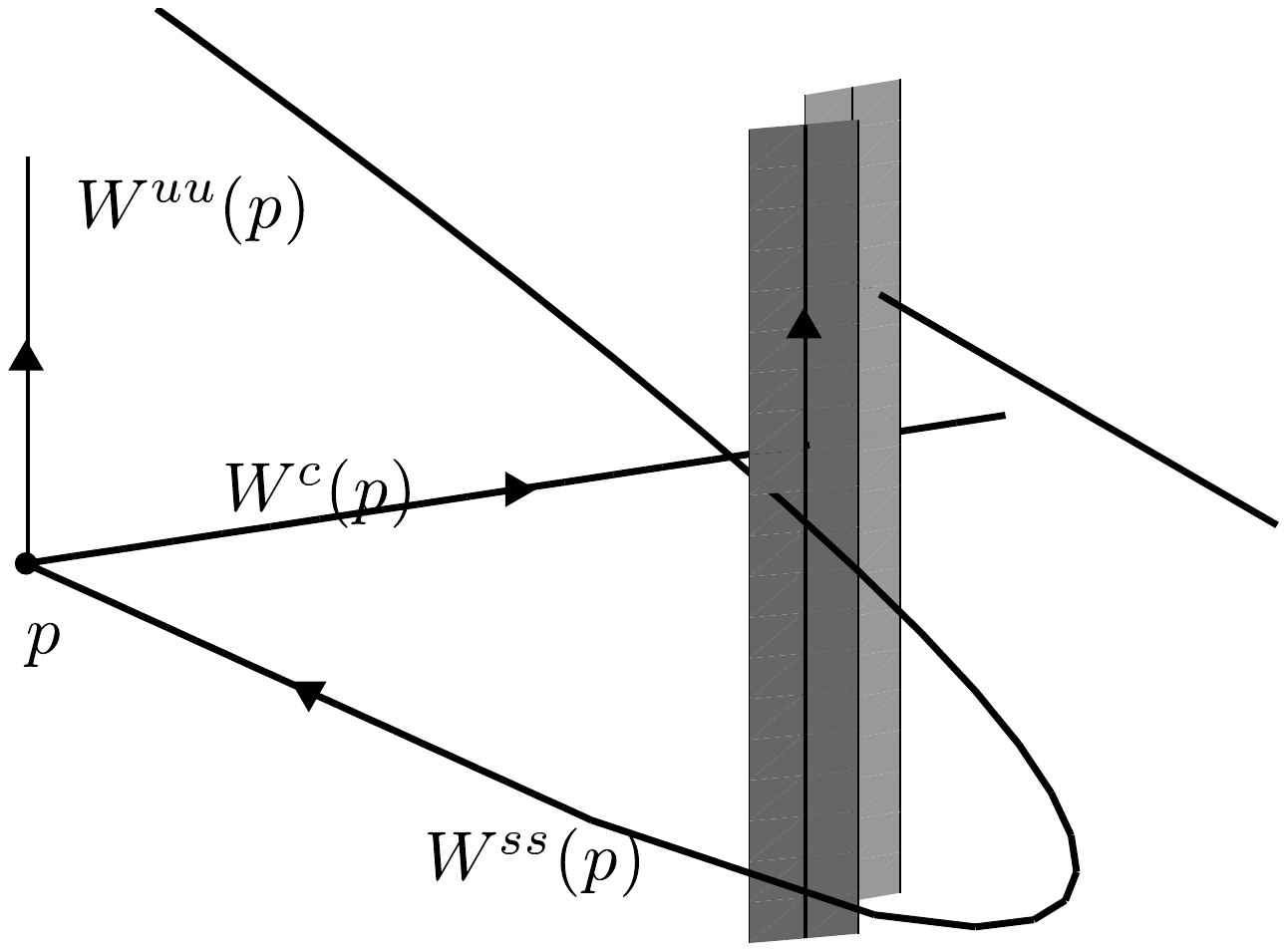}
 \end{minipage}
 \begin{minipage}{6.5cm}
 \psfrag{s}{$W^s(p)$}\psfrag{p}{$p$}\psfrag{u}{$W^u(p)$}\psfrag{c}{$W^c(p)$}\psfrag{b}{${\rm Bl}^{cu}(p)$}
 \includegraphics[bb=88 308 461 592, width=6cm]{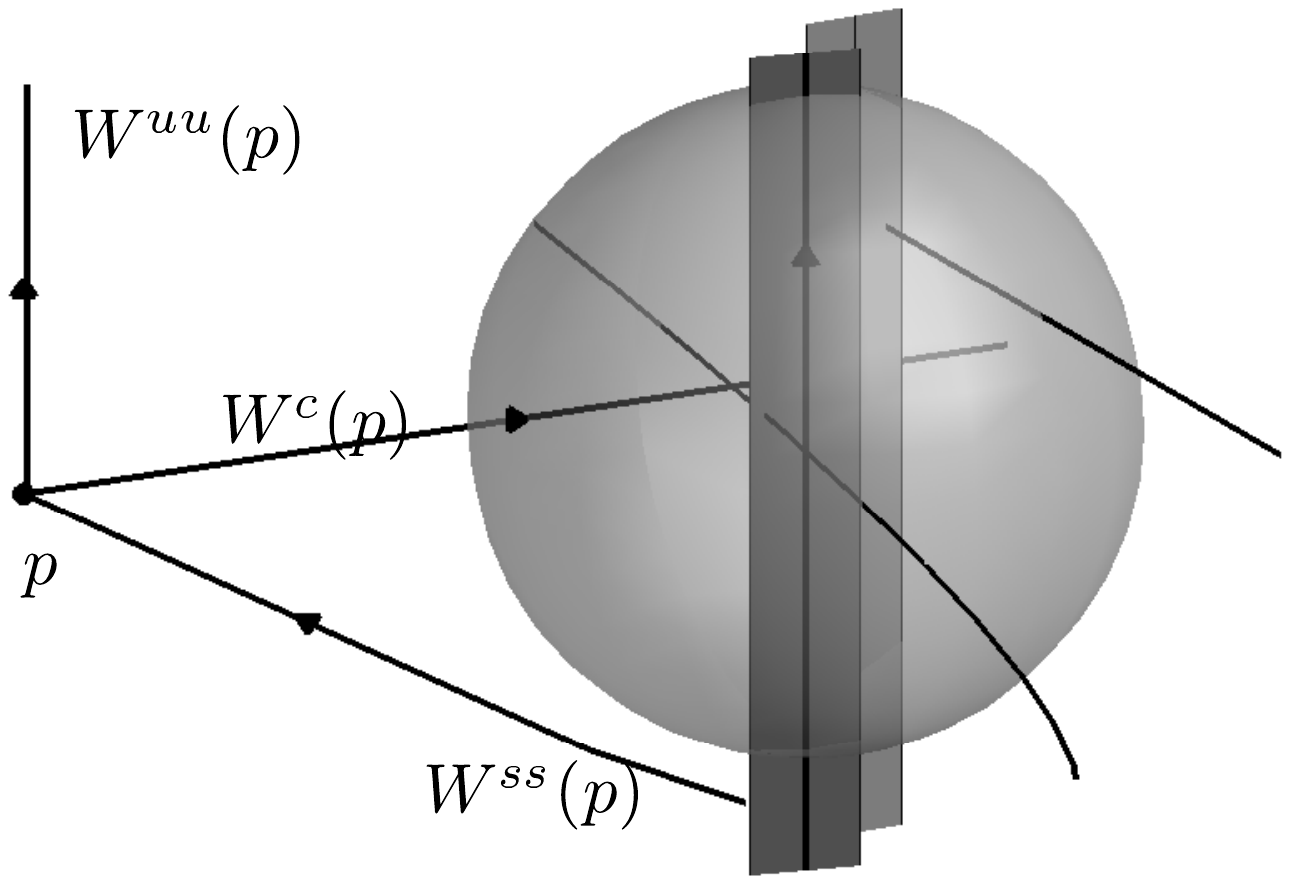}
 \end{minipage}
 \caption{\label{figure.blender} $cu$-blender associated to $p$}
 \end{figure}
 This is the definition we will be using in this work, see Definition 3.2 and Remark 3.5 of \cite{rhrhtu2009}.
 In \cite{bonattidiazvianalibro}, Chapter 6.2, there is a complete presentation on the subject and the different ways of defining blenders.\par
In Theorem 1.1. of \cite{rhrhtu2009}, we prove that $C^r$ conservative diffeomorphisms admitting blenders can be found arbitrarily near conservative diffeomorphisms with two hyperbolic periodic points whose unstable indices differ by one:
\begin{theorem}[F. Rodriguez Hertz, M. Rodriguez Hertz, A. Tahzibi, R. Ures \cite{rhrhtu2009}]  \label{proposition.blender.conservative} Let $f$ be a $C^r$ diffeomorphism preserving a smooth measure $m$ and having two hyperbolic periodic points $p$ of unstable index $(u+1)$ and $q$ of index $u$. Then $f$ is $C^1$-approximated by a $C^r$ diffeomorphism preserving $m$ which exhibits a $cu$-blender associated to the analytic continuation of $p$.
\end{theorem}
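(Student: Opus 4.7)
The natural strategy is to \emph{unfold a heterodimensional cycle} between $p$ and $q$, following the scheme of Bonatti and D\'{\i}az \cite{bodi1996}, but performed in the conservative category. A quick dimension count shows that $\dim W^u(p)+\dim W^s(q)=\dim M+1$, so $W^u(p)$ and $W^s(q)$ can generically meet transversally, while $\dim W^u(q)+\dim W^s(p)=\dim M-1$, so $W^u(q)$ and $W^s(p)$ generically do not meet but may be put into a quasi-transverse intersection by a small perturbation. The first step is therefore to apply a conservative $C^1$ connecting lemma to $f$ in order to produce, after a $C^1$-small $C^r$-smooth perturbation $f_1$, a heterodimensional cycle attached to the continuations $p_1$ and $q_1$: a transverse orbit $\Gamma\subset W^u(p_1)\pitchfork W^s(q_1)$ and a quasi-transverse orbit $\gamma\subset W^u(q_1)\cap W^s(p_1)$.

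The second step is to put the cycle into a \emph{normal form} by means of a conservative version of Franks' lemma. I would linearize $f_1$ in small tubular neighborhoods of the orbits of $p_1$ and $q_1$, so that the transition maps along $\Gamma$ and $\gamma$ become close to affine volume-preserving maps. In these coordinates the unstable bundle of $p_1$ splits as $E^u(p_1)=E^{uu}\oplus E^c$ with $\dim E^c=1$, reflecting the extra unstable direction of $p_1$ relative to $q_1$. The heart of the construction is a further $C^1$-small conservative perturbation supported near the quasi-transverse connection $\gamma$, which arranges the first return map to a small box $B$ centered at a suitable point of $W^s_{\mathrm{loc}}(p_1)$ to have two disjoint affine hyperbolic branches $T_1$ and $T_2$, whose action on the central coordinate is a pair of contractions $\varphi_1,\varphi_2$ of an interval $I\subset E^c$ satisfying the \emph{covering property} $\varphi_1(I)\cup\varphi_2(I)\supset I$. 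Standard IFS arguments (see \cite{bodi1996} and Chapter~6.2 of \cite{bonattidiazvianalibro}) then show that every $(u+1)$-strip $D^{cu}$ well placed in $B$ contains, after repeatedly applying the appropriate inverse branches, a sub-strip that is forced to meet $W^s_{\mathrm{loc}}(p_1)$ transversally; hence $B$ is a $cu$-blender associated to $p_1$.

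The \emph{main obstacle} compared with the dissipative setting is that every perturbation used in the two steps above must preserve $m$. Connecting lemmas and Franks'-type lemmas in the conservative category demand specific global perturbation devices, such as Hamiltonian flows or time-dependent volume-preserving isotopies with controlled supports, rather than the simple bump-function perturbations available in the dissipative world; these techniques have been developed, among others, in Arnaud's works on the conservative closing lemma \cite{arnaud1998} and in \cite{rhrhtu2009}. Once the model is built, $C^1$-robustness is automatic: a small $C^1$-perturbation moves $T_1$ and $T_2$ by arbitrarily small amounts, so the contractions $\varphi_1,\varphi_2$ still cover $I$, the analytic continuation of $p_1$ persists, and the box $B$ continues to contain a blender of uniform size, as required by the definition in Subsection \ref{subsection.blenders}.
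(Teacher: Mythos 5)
You should first note that the paper does not prove this statement at all: it is imported verbatim as Theorem 1.1 of the reference \cite{rhrhtu2009} (``Creation of blenders in the conservative setting''), so there is no internal proof to compare against. That said, your outline is indeed the strategy of that reference: create a heterodimensional cycle between $p$ and $q$, unfold it conservatively \`a la Bonatti--D\'{\i}az, and verify a covering property for the induced system of contractions on the one-dimensional central coordinate. Your dimension count is correct.

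Two steps, however, are asserted rather than proved, and they are exactly the points where the conservative setting bites. First, the ``conservative $C^1$ connecting lemma'' you invoke to produce the cycle is not free: connecting lemmas require a recurrence hypothesis linking $W^u(q)$ to $W^s(p)$ (and $W^u(p)$ to $W^s(q)$). In the volume-preserving category this is supplied by Poincar\'e recurrence together with the connecting lemma for pseudo-orbits of Bonatti--Crovisier \cite{boncrov2004}, since a conservative diffeomorphism is chain transitive on all of $M$; you should say this explicitly, because for an arbitrary pair of hyperbolic periodic points of a dissipative diffeomorphism the analogous step is simply false (one point may lie in an attractor and the other in a repeller). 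Moreover, the connecting lemma and Franks-type perturbations are intrinsically $C^1$ constructions; producing a perturbation that is simultaneously $C^1$-small, of class $C^r$, and $m$-preserving (as the statement demands) requires a regularization step which you only gesture at, and which is one of the genuine technical contributions of \cite{rhrhtu2009}. Second, robustness of the blender is not ``automatic'': the covering property $\varphi_1(I)\cup\varphi_2(I)\supset I$ must be arranged with a definite overlap margin, and the definition used in this paper additionally requires the blender to contain a ball of uniform size over a whole $C^1$-neighborhood; both are quantitative conditions that the construction has to build in explicitly rather than consequences of an open condition persisting.
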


\section{A criterion for ergodicity}\label{section.criterion}
In this section, we present a general criterion to establish ergodicity and hyperbolicity of a smooth measure on a set associated to a hyperbolic periodic point. Indeed, if $p$ is a hyperbolic periodic point, let $\Lambda^s(p)$
be the set of stable Pesin manifolds which transversely intersect $W^u(p)$, and $\Lambda^u(p)$ the set of unstable Pesin manifolds which transversely intersect $W^s(p)$. We show that if these two sets have positive measure, then they coincide modulo zero, and form a hyperbolic ergodic component $\Lambda(p)$ of the measure.
No partial hyperbolicity is required along this section.\par
The proof of this criterion follows the line of the Hopf argument, and it is split up into two parts. First, it is proved that if $\Lambda(p)$ has positive measure, then it is a hyperbolic ergodic component of the measure (A.2). A more delicate proof is required to show that if $\Lambda^s(p)$ and $\Lambda^u(p)$ have positive measure then they coincide modulo a zero set (\ref{main.theorem}.1). \par
We prove (A.2) by showing that all continuous functions $\varphi:M\to\RR$ have almost constant forward Birkhoff limit $\varphi^+$ on $\Lambda(p)$. To do this, we consider two typical points $x$ and $y$ in $\Lambda(p)$ and try to see that $\varphi^+(x)=\varphi^+(y)$. Observe that $\varphi^+$ is constant on stable Pesin leaves, due to continuity of $\varphi$. Since $x,y$ are typical, $m^u_x$-a.e. point in $W^u(x)$ takes the value $\varphi^+(x)$, and $m^u_y$-a.e. point in $W^u(y)$ takes the value $\varphi^+(y)$. We may consider iterates of $x$ and $y$ so large that they are very close to $W^u(p)$. We will therefore find two disks $D_x$ and $D_y$, one contained in $W^u(f^k(x))$ and the other contained in $W^u(f^m(y))$, such that they are very close. The stable holonomy takes positive measure sets on $D_x$ into positive measure sets in $D_y$. In particular it takes the set of points in $D_x$ for which the value is $\varphi^+(x)$ into a set of positive measure in $D_y$. The fact that $\varphi^+$ is constant along stable Pesin leaves, together with the fact that $m^u_y$-a.e. point in $D_y$ has the value $\varphi^+(y)$ prove that $\varphi^+(x)=\varphi^+(y)$.\par
The proof of (A.1), requires more delicate steps. Indeed, we want to prove that a typical point $x$ in $\Lambda^u(p)$ is contained in $\Lambda^s(p)$. In order to do that, we take a typical point $y$ in $\Lambda^s(p)$. The fact that $x$ is typical implies that $x$ belongs to $\Lambda^s(p)$ if and only if $m^u_x$-a.e. point in $W^u(x)$ belongs to $\Lambda^s(p)$. Proceeding as in the previous proof, one takes suitable iterates of $x$ and $y$ so that they are very close to $W^u(p)$. We would like to follow as in the proof above, by taking holonomies between close unstable disks; however, the dimension of $W^u(x)$ might be less than $\dim (W^u(y))$. We shall therefore sub-foliate $W^u(y)$ with disks of dimension $\dim (W^u(x))$, and choose a disk $D_y\subset W^u(y)$ such that $m_D$-a.e. point in $D_y$ belongs to $\Lambda^s(p)$, where $m_D$ is the Lebesgue measure induced on $D_y$. This is possible due to a Fubini argument, since $y$ is a typical point in $\Lambda^s(p)$.
 \newline\par
Let $f$ be a $C^{1+\alpha}$ volume preserving diffeomorphism. Given a hyperbolic
periodic point $p$ let $W^u(o(p))=\bigcup_{k\in \mathbb{Z}} W^u(f^k(p))$ be the unstable manifold of the orbit of
$p$, we analogously define the stable manifold of $o(p)$.

Given a hyperbolic periodic point $p$, we define the following
sets:
\begin{eqnarray*}
\Lambda^s(p)=\{x\in\cR:\PW^s(x)\ti W^u(o(p))\neq\emptyset\}\\
\Lambda^u(p)=\{x\in \cR:\PW^u(x)\ti W^s(o(p))\neq\emptyset\}
\end{eqnarray*}
where $\ti$ means that the intersection is transversal.
$\Lambda^s(p)$, is $f$-invariant and $s$-saturated. This means that if $x\in \Lambda^s(p)$, then $W^s(x)\subset \Lambda^s(p)$. An analogous statement holds for $\Lambda^u(p)$. See Figure \ref{figure.bup}.
\begin{figure}[h]
\includegraphics[bb=91 260 543 613, width=7cm]{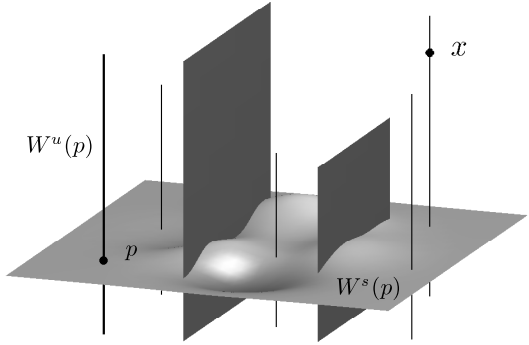}
\caption{\label{figure.bup} $\Lambda^u(p)$}
\end{figure}

We define the {\de ergodic homoclinic class} by
$$\Lambda(p):=\Lambda^u(p)\cap \Lambda^s(p)$$
The main result in this section is the following
\setcounter{maintheorema}{0}
\begin{maintheorema} Let $f:M\to M$ be a $C^{1+\alpha}$ diffeomorphism over a compact manifold $M$, and $m$ be a smooth invariant measure. If $m(\Lambda^s(p))>0$ and $m(\Lambda^u(p))>0$, then
\begin{enumerate}
\item $\Lambda(p)\circeq \Lambda^u(p)\circeq \Lambda^s(p),$
\item $\Lambda(p)$ is a hyperbolic ergodic component.
\end{enumerate}
\end{maintheorema}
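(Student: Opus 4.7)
My plan is to follow the classical Hopf scheme adapted to the Pesin setting, in the order the authors suggest: first establishing (2) by proving that forward Birkhoff averages of continuous functions are $m$-almost everywhere constant on $\Lambda(p)$, and then upgrading this to the equality (1) via an absolute continuity argument. Throughout, I would work on Pesin blocks $\PB$, where the stable and unstable Pesin laminations vary continuously and their holonomies are uniformly absolutely continuous (Theorem \ref{teorema.cont.abs.Pesin}). Both $\Lambda^s(p)$ and $\Lambda^u(p)$ are $f$-invariant; $\Lambda^s(p)$ is $s$-saturated and $\Lambda^u(p)$ is $u$-saturated; their regular points carry Pesin manifolds whose dimensions are locally constant on each block, and on $\Lambda(p)$ these dimensions must match $\dim W^s(p)$ and $\dim W^u(p)$, which already yields non-uniform hyperbolicity on $\Lambda(p)$.

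For (2), I would fix a continuous $\varphi:M\to\RR$ and the forward/backward Birkhoff averages $\varphi^\pm$. Uniform continuity of $\varphi$ gives that $\varphi^+$ is constant on every $W^s(x)$ and $\varphi^-$ on every $W^u(x)$; Birkhoff's theorem gives $\varphi^+=\varphi^-$ almost everywhere. Take two density points $x,y\in\Lambda(p)\cap\PB$ for some $l$. By definition there exist heteroclinic points $z_x\in W^s(x)\pitchfork W^u(o(p))$ and $z_y\in W^s(y)\pitchfork W^u(o(p))$; pushing forward by suitable iterates and using the $\lambda$-lemma for Pesin manifolds, I obtain large unstable Pesin disks $D_x\subset W^u(f^{N_x}(x))$ and $D_y\subset W^u(f^{N_y}(y))$ that both accumulate on the same unstable disk inside $W^u(f^k(p))$. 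The stable Pesin holonomy $h$ between $D_x$ and $D_y$ is absolutely continuous. Since $x,y\in\Lambda^u(p)$ are typical, $\varphi^+\equiv\varphi^+(x)$ on a full-measure subset of $D_x$ and $\varphi^+\equiv\varphi^+(y)$ on a full-measure subset of $D_y$; since $h$ preserves stable leaves and $\varphi^+$ is constant along them, $\varphi^+(x)=\varphi^+(y)$. This proves ergodicity on $\Lambda(p)$, and item (2) follows.

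For (1), it suffices to show $\Lambda^u(p)\subset\Lambda^s(p)$ mod $0$. Let $x$ be a density point of $\Lambda^u(p)\cap\PB$; I want $x\in\Lambda^s(p)$. Because $m(\Lambda^s(p))>0$, by Fubini applied to a measurable partition subordinate to $W^u$ (Definition \ref{fubini.like}) there is a point $y\in\Lambda^s(p)\cap\PB$ such that $m^u_y(\Lambda^s(p)\cap W^u(y))>0$. As in (2), push $y$ backward and $x$ forward until $W^u(f^{N_x}(x))$ and $W^u(f^{-M_y}(y))$ both admit disks accumulating on a common piece of $W^u(f^k(p))$. Using the $s$-saturation of $\Lambda^s(p)$ together with absolute continuity of the stable Pesin holonomy from a disk $D_y\subset W^u(f^{-M_y}(y))$ to a disk $D_x\subset W^u(f^{N_x}(x))$, transport the positive-measure set $\Lambda^s(p)\cap D_y$ into a positive-measure subset of $\Lambda^s(p)\cap D_x$. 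Since $\Lambda^s(p)$ is $s$-saturated and $x$ is a Lebesgue density point along $W^u(x)$, this forces $x\in\Lambda^s(p)$.

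The main obstacle, as the authors note, is the possible dimension mismatch: the unstable Pesin dimensions of $x$ and $y$ need not coincide a priori, so the holonomy between $D_x$ and $D_y$ is not defined in the naive way. I would handle this by sub-foliating $W^u(y)$ by $(\dim W^u(x))$-dimensional disks almost tangent to the fast unstable subspace corresponding to the most expanding Oseledec directions of $y$, applying a second Fubini argument (on the appropriate Pesin block) to select a sub-disk still charging $\Lambda^s(p)$, and then projecting along the stable holonomy onto $W^u(f^{N_x}(x))$. A secondary technical difficulty is that Pesin blocks are not invariant, so the uniform size of stable manifolds and absolute continuity constants must be tracked along the chosen iterates; this is dealt with by choosing $x,y$ in blocks that return infinitely often (by Poincaré recurrence) and by only invoking the holonomy on a fixed block at the time of accumulation, which is the standard way to run the Hopf argument in the Pesin context.
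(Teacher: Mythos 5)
Your overall strategy---Hopf argument on Pesin blocks, absolute continuity of the stable holonomy, typical points, proving (2) first for continuous observables and then upgrading to the identity (1)---is the paper's strategy, and your part (2) is essentially the paper's proof (modulo a harmless slip: to make $W^u(f^{N_x}(x))$ accumulate on $W^u(p)$ via the $\lambda$-lemma you must use the heteroclinic intersection $W^u(x)\pitchfork W^s(o(p))$ coming from $x\in\Lambda^u(p)$, not the points $z_x\in W^s(x)\pitchfork W^u(o(p))$ you wrote down). The genuine problems are in part (1), and the central one is that your device for the dimension mismatch goes in the wrong direction. For $y\in\Lambda^s(p)$, transversality of $W^s(y)$ with $W^u(o(p))$ forces $\dim W^s(y)\geq\dim W^s(p)$, hence $\dim W^u(y)\leq\dim W^u(p)$, while $x\in\Lambda^u(p)$ forces $\dim W^u(x)\geq\dim W^u(p)$. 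So $\dim W^u(y)\leq\dim W^u(x)$, and one cannot sub-foliate $W^u(y)$ by disks of dimension $\dim W^u(x)$: the manifold you propose to slice is the \emph{smaller} one. The paper does the opposite: it takes an \emph{ambient} Lebesgue density point $y$ of $\Lambda^s(p)\cap\cR_{\eps,l}\cap\cT$, foliates the whole ball $B_\delta(y)$ by a smooth foliation $\cL$ of dimension $n-\dim W^s_{loc}(y)$ transverse to $W^s_{loc}(y)$, arranged so that the leaves through points of $W^u(x_k)$ lie \emph{inside} $W^u(x_k)$ (it is the big target $W^u(x_k)$ that gets sub-foliated), applies Fubini in $B_\delta(y)$ to find a smooth leaf $L_\xi$ charging $\Lambda^s(p)$, and only then pushes by stable holonomy into the leaves $L_w\subset W^u(x_k)$, using $s$-saturation and a second Fubini along $W^u(x_k)\pitchfork W^s_{loc}(y)$. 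Your source of positive measure ($m^u_y(\Lambda^s(p)\cap W^u(y))>0$) is not the one that can be transported.

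Two further steps of your part (1) would fail as written. First, ``push $y$ backward'' so that $W^u(f^{-M_y}(y))$ accumulates on $W^u(p)$ cannot work: backward iteration contracts unstable Pesin manifolds, and $y\in\Lambda^s(p)$ provides no intersection $W^u(y)\pitchfork W^s(o(p))$ to feed into the $\lambda$-lemma. What is actually available is that the heteroclinic point in $W^s(y)\cap W^u(o(p))$ makes \emph{forward} iterates of the point $y$ approach $W^u(o(p))$, so one chooses the density point $y$ already $\delta/2$-close to $W^u(p)$ with $W^s_{loc}(y)$ crossing $W^u(p)$, and gets $W^u(x_k)\pitchfork W^s_{loc}(y)\neq\emptyset$ from the $\lambda$-lemma applied to $x$. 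Second, your closing step ``$x$ is a Lebesgue density point along $W^u(x)$, this forces $x\in\Lambda^s(p)$'' is circular: the construction yields a positive $m^u_{x_k}$-measure subset of $W^u(x_k)$ inside $\Lambda^s(p)$, not full density at $x_k$, and density of $\Lambda^s(p)$ along $W^u(x)$ at $x$ is essentially what is being proved. The paper closes the gap with Lemma \ref{fullmeasure} (typical points for $L^1$ functions) applied to ${\mathbf 1}_{\Lambda^s(p)}$: for typical $x_k$ the Birkhoff average of ${\mathbf 1}_{\Lambda^s(p)}$ is $m^u_{x_k}$-a.e.\ constant on $W^u(x_k)$ and equals its value at $x_k$; since the transported set lies in the invariant set $\Lambda^s(p)$ that constant is $1$, whence $x_k$, and so $x$, lies in $\Lambda^s(p)$. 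Your sketch never introduces this $L^1$ refinement of the typical-points lemma, and without it (or a substitute) the last implication does not follow.
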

Observe that, a priori, there could be points in
$\Lambda^s(p)$ or $\Lambda^u(p)$ having zero exponents, though a fortiori it will be not the case. We shall use this fact in
the proof of Theorem \ref{teorema.prueba.conjetura} (Pugh-Shub Conjecture for 2-dimensional center bundle). Observe also that we do not require that $\dim
\PW^s(x)=\dim W^s(p)$ and $\dim W^u(x)=\dim W^u(p)$, as it is seen in Figure \ref{figure.bup} although this will happen
for $m-$a.e. $x\in \Lambda(p)$. \par
Theorem \ref{main.theorem} has as a corollary:
\begin{corollary}\label{corollary}
  Let $f:M\to M$ be a $C^{1+\alpha}$ diffeomorphism and $m$ a smooth invariant measure. If $m(\Lambda(p))>0$ for a hyperbolic point $p$, then $\Lambda(p)$ is a hyperbolic ergodic component of $f$.
\end{corollary}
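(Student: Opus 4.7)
The corollary should follow almost immediately from Theorem A. The plan is to reduce the hypothesis $m(\Lambda(p))>0$ to the two hypotheses of Theorem A, namely $m(\Lambda^s(p))>0$ and $m(\Lambda^u(p))>0$, and then invoke that theorem directly.

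The key observation is the set-theoretic inclusion
\[
\Lambda(p) = \Lambda^s(p)\cap \Lambda^u(p) \subseteq \Lambda^s(p), \qquad \Lambda(p)\subseteq \Lambda^u(p),
\]
coming straight from the definition of the ergodic homoclinic class given earlier in this section. By monotonicity of the measure $m$, the assumption $m(\Lambda(p))>0$ yields at once
\[
m(\Lambda^s(p)) \ge m(\Lambda(p))>0 \quad \text{and}\quad m(\Lambda^u(p))\ge m(\Lambda(p))>0.
\]
Hence the hypotheses of Theorem \ref{main.theorem} are satisfied for the hyperbolic periodic point $p$.

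Applying Theorem \ref{main.theorem}, I obtain $\Lambda(p)\circeq \Lambda^s(p)\circeq \Lambda^u(p)$, that $f$ is ergodic on $\Lambda(p)$, and that $f$ is non-uniformly hyperbolic on $\Lambda(p)$. Together these are precisely the statement that $\Lambda(p)$ is a hyperbolic ergodic component of $f$, which is what the corollary asserts.

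There is no real obstacle here: the entire content of the corollary is packaged inside Theorem A, and the only work is the trivial monotonicity step that turns the single positivity hypothesis $m(\Lambda(p))>0$ into the two positivity hypotheses $m(\Lambda^s(p))>0$ and $m(\Lambda^u(p))>0$ required by Theorem A.
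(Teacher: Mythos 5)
Your proof is correct and is exactly the argument the paper intends (the corollary is stated without proof, as an immediate consequence of Theorem \ref{main.theorem}): since $\Lambda(p)=\Lambda^s(p)\cap\Lambda^u(p)$, the single hypothesis $m(\Lambda(p))>0$ forces $m(\Lambda^s(p))>0$ and $m(\Lambda^u(p))>0$, and Theorem \ref{main.theorem} then yields ergodicity and non-uniform hyperbolicity on $\Lambda(p)$.
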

\subsection{Proof of (A.2)}
For any given function $\varphi\in L^1_m(M,\RR)$, let
\begin{equation}  \label{birkhoff}
\varphi^\pm(x)=\lim_{n\rightarrow\pm
\infty}\frac1n\sum^{n-1}_{i=0}\varphi(f^n(x))
\end{equation}
By Birkhoff Ergodic Theorem, the limit (\ref{birkhoff}) exists and $\varphi^+(x)=\varphi^-(x)$ a.e. $x\in M$. $\varphi^\pm(x)$ is $f$-invariant.
\begin{lemma}[Typical points for continuous functions]  \label{full.continuous} There exists an invariant set $\cT_0$ of {\de typical points} with $m(\cT_0)=1$ such that for all $\varphi\in C^0(M)$  if $x\in \cT_0$ then  $\varphi^+(w)=\varphi^+(x)$ for all $w\in W^s(x)$ and $m^u_x$-a.e. $w\in W^u(x)$.
\end{lemma}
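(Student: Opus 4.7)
The plan is to reduce the uncountable family $C^0(M)$ to a countable one by separability, and then exploit absolute continuity of the unstable lamination to transfer an $m$-a.e.\ statement to an $m^u_x$-a.e.\ statement on leaves.

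First I would fix a uniformly dense subset $\{\varphi_k\}_{k\geq 1}\subset C^0(M)$, which exists since $M$ is compact metrizable, and apply the Birkhoff Ergodic Theorem to each $\varphi_k$. This produces an invariant full-measure set $\cT_1$ on which $\varphi_k^+(x)=\varphi_k^-(x)$ exists for every $k$. For any $\varphi\in C^0(M)$ and any $\eps>0$, choosing $k$ with $\|\varphi-\varphi_k\|_\infty<\eps$ gives $|\varphi^\pm-\varphi_k^\pm|\leq\eps$ wherever the right-hand side is defined, so on $\cT_1$ in fact $\varphi^+(x)=\varphi^-(x)$ exists simultaneously for every $\varphi\in C^0(M)$.

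For the stable statement I would fix $x\in\cT_1$ and any $w\in W^s(x)$: the defining condition of the stable Pesin manifold forces $d(f^n x,f^n w)\to 0$; uniform continuity of $\varphi$ on compact $M$ yields $\varphi(f^n x)-\varphi(f^n w)\to 0$, and Ces\`aro averaging gives $\varphi^+(w)=\varphi^+(x)$ pointwise on $W^s(x)$, with no measure-theoretic hypothesis on $w$.

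For the unstable statement I would fix a measurable partition $\xi$ subordinate to $W^u$ and disintegrate $m=\int m_x^\xi\,dm_T$ via Rokhlin, so that $m(\cT_1)=1$ forces $m_x^\xi(\cT_1\cap\xi(x))=m_x^\xi(\xi(x))$ for $m$-a.e.\ $x$. Absolute continuity of the unstable partition (Theorem~\ref{teorema.cont.abs.Pesin}) gives $m_x^\xi\ll\lambda_x^u$, whence $m_x^u$-a.e.\ $w\in W^u(x)$ lies in $\cT_1$. Applying the stable argument to $f^{-1}$ and using $d(f^{-n}x,f^{-n}w)\to 0$ yields $\varphi^-(w)=\varphi^-(x)$, and combined with $\varphi^+=\varphi^-$ on $\cT_1$ this gives $\varphi^+(w)=\varphi^+(x)$. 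I would finally let $\cT_0\subset\cT_1$ be the $f$-invariant full-measure subset on which both conclusions hold.

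The main obstacle is this last step: $m$ assigns zero mass to an individual unstable leaf, so separability plus Birkhoff alone cannot pin down the forward Birkhoff averages along $W^u(x)$. Pesin's absolute continuity of the unstable lamination is precisely what upgrades ``full $m$-measure'' to ``full $m_x^u$-measure on $m$-a.e.\ leaf,'' and is the only non-trivial input beyond the standard Hopf-type uniform-continuity computation.
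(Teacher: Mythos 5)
Your proof is correct and follows essentially the same Hopf-type route as the paper: Birkhoff plus absolute continuity of the unstable partition to transfer the full-measure set where $\varphi^+=\varphi^-$ to a leafwise conull set, constancy of $\varphi^-$ along unstable leaves by backward contraction, and uniform continuity for the stable statement. Your explicit reduction to a countable dense family of continuous functions (which the paper leaves implicit) and your shortcut of closing the loop via $\varphi^+(x)=\varphi^-(x)=\varphi^-(w)$ in place of the paper's second absolute-continuity argument are minor streamlinings, not a different method; just note that for the transfer step what one really uses is that the conditional measures of the smooth measure are \emph{equivalent} to leaf volume (or simply the disintegration identity itself, if $m^u_x$ denotes the conditional), not merely $m^\xi_x\ll\lambda^u_x$.
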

\begin{proof}
Let us consider the full measure set:
$$\cS_0=\{x\in M: \exists \varphi^+(x)=\varphi^-(x)\}$$
For almost all $x\in \cS_0$, we have that $m^u_x$-a.e. $\xi\in W^u_{loc}(x)$, $\xi\in\cS_0$. Otherwise, there would exist a positive measure set $A\subset M$ such that for all $x\in A$ there is a subset $B_x\subset W^u_{loc}(x)\setminus\cS_0$ with $m^u_x(B_x)>0$. Considering a density point $y$ of $A$ and integrating along a transverse small disk $T$, we would obtain a set $B\subset M\setminus \cS_0$ such that
$$m(B)=\int_T m^u_x(B_x)dm_T(x)>0$$
which is an absurd. As we have seen, the following is a full measure set:
$$\cS_1=\{x\in \cS_0: m^u_x\mbox{-a.e. }\xi\in W^u_{loc}(x),\, \xi\in \cS_0\}$$
For all $x\in \cS_1$ there exists $\xi_x$ such that $m^u_x$-a.e. $\xi\in W^u_{loc}(x)$, $\varphi^+(\xi)=\varphi^-(\xi)=\varphi^-(\xi_x)=\varphi^+(\xi_x)$. But almost every $x\in \cS_1$ satisfies $\varphi^+(x)=\varphi^+(\xi_x)$. Otherwise, we would obtain a positive measure set $C\subset \cS_1$ such that $m^u_x(C\cap W^u_{loc}(x))=0$ for almost every $x$, which contradicts absolute continuity. The invariance of $\varphi^+$ yields a set $\cT_0\subset S_1$, with $m(\cT_0)=1$ and such that if $x\in\cT_0$ then $m^u_x$-a.e. $\xi\in W^u(x)$ satisfies $\varphi^+(x)=\varphi^+(\xi)$. Since $\varphi$ is continuous, we obviously have that $\varphi^+$ is constant on $W^s(x)$.
\end{proof}
Assume for simplicity that $p$ is a fixed point.
Given a continuous function $\varphi:M\to \RR$, let $\cT_0$ be the set of typical points obtained in Lemma \ref{full.continuous} and $\cR$ be the set of regular points. We will see that  $\varphi^+$ is constant on $\Lambda(p)\cap \cT_0\cap\cR$, and hence almost everywhere constant on $\Lambda(p)$. This will prove that $f$ is ergodic on $\Lambda(p)$. Hyperbolicity of the measure follows trivially.\par
For any $\eps>0$ and $l>1$ such that $m(\Lambda(p)\cap\PB)>0$, let us call $\Lambda=\Lambda(p)\cap\PB\cap \cT_0$. Without loss of generality, we may assume that all points in $\Lambda$ are Lebesgue density points of $\Lambda$, and return infinitely many times to $\Lambda$ in the future and the past. Note that there exists $\delta>0$ such that for all $x\in \Lambda$, $W^s_{loc}(x)$ contains an $s$-disc of radius $\delta$ centered at $x$.\par
Take $x,y\in \Lambda$, and consider $n>0$ such that $y_n=f^n(y)\in \Lambda$ and $d(y_n,W^u(p))<\delta/2$. We have  $W^s_{loc}(y_n)\ti W^u(p)$. \par
As a consequence of the $\lambda$-lemma, there exists $k>0$ such that $x_k=f^k(x)\in\Lambda$ and $W^u(x_k)\ti W^s_{loc}(y_n)$. See Figure \ref{figure.proofbp}. \par
\begin{figure}[h]\vspace{-.5cm}
\includegraphics[bb=91 262 543 613, width=7cm, clip]{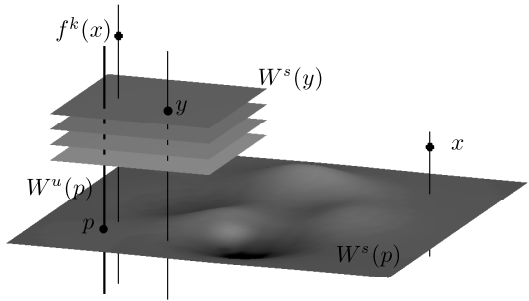}
\caption{\label{figure.proofbp} Proof of (A.1)}
\end{figure}
Since $y_n$ is a typical point for $\varphi$, for $m^u_{y_n}$-a.e. $w$ in $W^u(y_n)$ we have $\varphi^+(w)=\varphi^+(y_n)$. Also we have $\varphi^+(z)=\varphi^+(x_k)$.
Since $y_n$ is a Lebesgue density point of $\Lambda$, applying Fubini's theorem we get a point $\xi$ near $y_n$ such that the stable holonomy between $W^u_{loc}(\xi)$ and $W^u_{loc}(y_n)$ is defined for a set of $m^u_{y_n}$-positive measure in $W^u_{loc}(y_n)$. The $\lambda$-lemma implies that the stable holonomy between $W^u_{loc}(y_n)$ and $W^u_{loc}(z)$ is defined for a $m^u_{y_n}$-positive measure set, where $z$ is a point in $W^u(x_k)$, see Figure \ref{figure.proofbp}. \par
Now, $\varphi^+$ is constant along stable Pesin manifolds. And due to absolute continuity, stable holonomy takes the set of points $w$ in $W^u_{loc}(y_n)$ for which $\varphi^+(w)=\varphi^+(y_n)$ into a set of positive measure in $W^u_{loc}(z)$ for which the value of $\varphi^+$ will be $\varphi^+(y_n)$. The fact that $x_k$ is a typical point of $\varphi$ then implies that $\varphi^+(x)=\varphi^+(x_k)=\varphi^+(y_n)=\varphi^+(y)$.
\subsection{Proof of (A.1)}

In order to prove (A.1), we shall need a refinement of Lemma \ref{full.continuous}:
\begin{lemma}[Typical points for $L^1$ functions] \label{fullmeasure} Given $\varphi\in L^1$ there exists an invariant set $\cT\subset M$ of {\de typical points} of $\varphi$, with $m(\cT)=1$ such
that if $x\in\cT$ then $\varphi^+(z)=\varphi^+(x)$ for $m^s_x$-a.e. $z\in
W^s(x)$ and $m^u_x$-a.e. $z\in W^u(x)$.
\end{lemma}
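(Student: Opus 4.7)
The plan is to approximate $\varphi \in L^1$ by continuous functions, apply Lemma \ref{full.continuous} to each approximant, and then transfer the leafwise constancy to $\varphi$ itself using the Maximal Ergodic Theorem together with the absolute continuity of the Pesin stable and unstable partitions (Theorem \ref{teorema.cont.abs.Pesin}).

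First, I choose continuous functions $\varphi_n$ with $\|\varphi_n-\varphi\|_1 \le 4^{-n}$. Lemma \ref{full.continuous} produces, for each $n$, an invariant set $\cT_0^n$ with $m(\cT_0^n)=1$ on which $\varphi_n^+$ is constant along every $W^s(x)$ and constant along $m^u_x$-a.e.\ $z\in W^u(x)$. Next, using the linearity $(\varphi_n-\varphi)^+ = \varphi_n^+ - \varphi^+$ a.e.\ together with the Maximal Ergodic Theorem applied to $|\varphi_n-\varphi|$ and the Borel--Cantelli lemma (the bound $m\{|(\varphi_n-\varphi)^+|>2^{-n/2}\}\le 2^{n/2}\|\varphi_n-\varphi\|_1$ being summable in $n$), I obtain an invariant full-measure set $A$ on which $\varphi_n^+(x)\to\varphi^+(x)$ pointwise. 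Set $\cT_1 = A\cap\bigcap_n \cT_0^n$; this is invariant and of full measure.

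Now I invoke absolute continuity (Theorem \ref{teorema.cont.abs.Pesin}) via a Fubini-type argument based on measurable partitions subordinate to $W^s$ and to $W^u$, together with \eqref{ecuacion.measurable.partition}. This yields a full-measure invariant set $\cT\subset \cT_1$ such that for every $x\in\cT$,
\[
m^s_x\bigl(W^s(x)\setminus \cT_1\bigr)=0 \qquad\text{and}\qquad m^u_x\bigl(W^u(x)\setminus \cT_1\bigr)=0.
\]
Fix $x\in\cT$ and $z\in W^s(x)\cap \cT_1$. Since $x\in\cT_0^n$ for every $n$ and $\varphi_n^+$ is constant along $W^s(x)$, one has $\varphi_n^+(z)=\varphi_n^+(x)$ for every $n$; since $x,z\in A$, passing to the limit gives $\varphi^+(z)=\varphi^+(x)$. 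For the unstable case, for each $n$ the set $B_n(x)\subset W^u(x)$ on which $\varphi_n^+(z)=\varphi_n^+(x)$ has full $m^u_x$-measure by Lemma \ref{full.continuous}; intersecting with $W^u(x)\cap \cT_1$ (still $m^u_x$-full) and taking $n\to\infty$ yields the same conclusion.

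The main obstacle is not the maximal-inequality step but the passage from full $m$-measure of $\cT_1$ in $M$ to full $m^s_x$- and $m^u_x$-measure on individual leaves. This is precisely where absolute continuity of the Pesin partitions is indispensable: the canonical conditional measures along a measurable partition subordinate to $W^s$ (respectively $W^u$) are equivalent to the Riemannian measures $m^s_x$ (respectively $m^u_x$), so the Fubini identity \eqref{ecuacion.measurable.partition} translates null $m$-measure of $M\setminus \cT_1$ into $m^s_x$-null intersections for almost every leaf, thereby constructing the desired $\cT$.
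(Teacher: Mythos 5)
Your proof is correct and follows essentially the same route as the paper: approximate $\varphi$ in $L^1$ by continuous functions, apply Lemma \ref{full.continuous}, and pass to an a.e.\ limit of $\varphi_n^+$. The only differences are cosmetic --- the paper extracts an a.e.-convergent subsequence from the $L^1$ convergence of $\varphi_n^+$ where you use a rapidly converging sequence plus Borel--Cantelli, and you spell out the absolute-continuity/Fubini step (transferring full $m$-measure of the convergence set to full $m^s_x$- and $m^u_x$-measure on leaves) that the paper leaves implicit.
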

\begin{proof}
Given $\varphi\in L^1$ take a sequence of continuous functions
$\varphi_n$  converging to $\varphi$ in $L^1$. Now,
$\varphi^+_{n}$ converges in $L^1$ to $\varphi^+$, so there exists
a subsequence $\varphi_{n_k}^+$ converging a.e. to
$\varphi_+$. Call $\cS$ this set of a.e. convergence. Then the set $\cT$ is the intersection of the set $\cT_0$ obtained in Lemma \ref{full.continuous} with $\cS$.
 \end{proof}
 Let us give now the proof of (A.1).
\begin{proof}[Proof of (A.1)] To simplify ideas, let us suppose that $p$ is a hyperbolic fixed point.
Let $\cT$ be the set of typical points for the characteristic function ${\mathbf 1}_{\Lambda^s(p)}$ of the set $\Lambda^s(p)$. Take $x\in \Lambda^u(p)\cap \cT$ such that all iterates of $x$ are Lebesgue density points of $\Lambda^u(p)$ and $x$ returns infinitely many times to $\Lambda^u(p)$. We shall see that $x\in \Lambda^s(p)$. This will prove $\Lambda^u(p)\overset\circ\subset\Lambda^s(p)$. The converse inclusion follows analogously.\par
Let $\eps>0$, $l>1$ be such that $m(\Lambda^s(p)\cap\PB)>0$, and let $\delta>0$ be such that for all $z\in \Lambda^s(p)\cap\PB$, the set $W^s_{loc}(z)$ contains an $s$-disc of radius $\delta>0$ centered at $z$. Consider a Lebesgue density point $y$ of $\Lambda^s=\Lambda^s(p)\cap\PB\cap\cT$ such that $d(y,W^u(p))<\delta/2$.\par
As a consequence of the $\lambda$-lemma, there exists $k>0$ such that $x_k=f^k(x)\in \Lambda^u(p)\cap \cT$ and  $W^u(x_k)\ti W^s_{loc}(y)$. Note that this intersection could a priori have positive dimension. See Figure \ref{figure.proof.criterium}.\par
\begin{figure}[h]
\includegraphics[bb= 70 261 540 613, width=7cm]{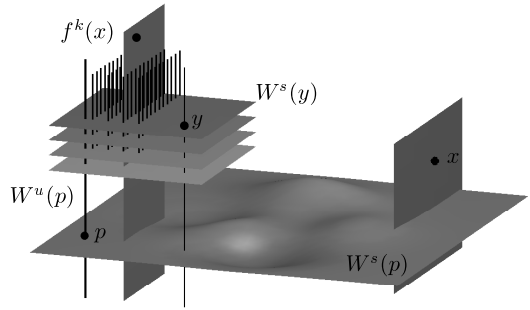}
\caption{\label{figure.proof.criterium} Proof of Theorem \ref{main.theorem}}
\end{figure}
Since $y$ is a Lebesgue density point of $\Lambda^s$, we have $m(\Lambda^s\cap B_\delta(y))>0$. Take a smooth foliation $\cL$ in $B_\delta(y)$ of dimension $u_y=n-\dim W^s_{loc}(y)$ and transverse to $W^s_{loc}(y)$. Note that $u_y\leq \dim W^u(p)$. We can also ask that the leaf $L_w$ of $\cL$ containing a point $w\in W^u(x_k)$ be contained in $W^u(x_k)$. See Figure \ref{figure.proof.criterium}.\par
By Fubini's theorem we have:
$$m(\Lambda^s\cap B_\delta(y))=\int_{W^s_{loc}(y)}m^L_\xi(L_\xi\cap\Lambda^s)dm^s_y(\xi)$$
so $m^L_\xi(L_\xi\cap \Lambda^s)>0$ for $m^s_y$-a.e. $\xi\in W^s_{loc}(y)$. Take $L\in \cL$ such that $m^L_\xi(L\cap \Lambda^s)>0$, this means that there is a $m^L_\xi$-positive measure set of points $w\in L_\xi$ such $w\in \Lambda^s(p)$. The stable holonomy takes this $m^L_\xi$-positive measure set into a $m^L_w$-positive measure set in $L_w$ for all $w\in W^u(x_k)\cap B_\delta(y)$. But $\Lambda^s(p)$ is a set saturated by stable leaves. This means that $m^L_w(L_w\cap \Lambda^s)>0$ for all $w\in W^u(f^k(x))\cap B_\delta(y)$.\par
If $W^u(x_k)\cap W^s_{loc}(y)$ is zero dimensional, this readily implies that $m^u_{x_k}(W^u(x_k)\cap \Lambda^s(p))>0$, and hence, since $x_k$ is a typical point, $x_k$ and $x$ belong to $\Lambda^s(p)$.\par
 Otherwise, take an open submanifold $T$ of $W^u(x_k)\ti W^s_{loc}(y)$. Then, by Fubini again:
$$m^u_{x_k}(\Lambda^s\cap W^u(x_k)\cap B_\delta(x))\geq \int_{T}m^L_w(L_w\cap \Lambda^s)dm_T(w)>0$$
We have that a $m^u_{x_k}$-positive measure set of $w\in W^u(x_k)$ satisfies ${\mathbf 1}_{\Lambda^s(p)}(w)=1$. Since $x_k$ is a typical point, this implies that $x_k$ and hence $x$ are in $\Lambda^s(p)$. Therefore,  $\Lambda^u(p)\overset{\circ}{\subset} \Lambda^s(p)$. The converse inclusion follows in an analogous way.
\end{proof}
\begin{remark}\label{criterio a usar}
As an immediate consequence of the $\lambda$-lemma, we have that if $W^u(p)\ti W^s(q)\neq\emptyset$ then $\Lambda^u(p)\subset \Lambda^u(q)$ and $\Lambda^s(q)\subset \Lambda^s(p)$.
\end{remark}
\subsection{An example where $m(\Lambda^u(p))=1$, $m(\Lambda^s(p))=0$ and $f$ is non-ergodic}
\label{subsection.example}
In this subsection we present an example showing the necessity of
requiring that both $m(\Lambda^u(p))>0$ and $m(\Lambda^s(p))>0$ in Theorem \ref{main.theorem}, and not just $m(\Lambda^u(p))>0$, for instance. The example closely
follows the construction by D. Dolgopyat, H. Hu and Ya. Pesin in Appendix B of \cite{barreirapesin2001}, where the authors obtain a
non-uniformly hyperbolic diffeomorphism of $\mathbb{T}^3$ with
infinitely many ergodic components. Let us show how this is obtained: \par
Consider first a diffeomorphisms $f\times id: \mathbb{T}^2\times
\mathbb{S}^1\rightarrow \mathbb{T}^2\times \mathbb{S}^1$ where
$f:\mathbb{T}^2\rightarrow \mathbb{T}^2$ is an Anosov diffeomorphism
with two fixed points $p$ and $q$, and
$id:\mathbb{S}^1\rightarrow \mathbb{S}^1$ is the identity map on the circle.\par
We will perform three small perturbations in order to obtain a diffeomorphism $g:\mathbb{T}^2\times
\mathbb{S}^1\rightarrow \mathbb{T}^2\times \mathbb{S}^1$ such that:
\begin{enumerate}
\item the tori $\mathbb{T}^2\times\{0\}$ and $\mathbb{T}^2\times\{\frac12\}$, are
$g$-invariant and the restriction of $g$ to any of them is Anosov;
\item the center Lyapunov exponent is greater than 0 almost
everywhere;
\item $g$ has two ergodic components: $\mathbb{T}^2\times
(0,\frac12)$ and $\mathbb{T}^2\times (\frac12,1)$;
\item $(p,\frac12)$ is a hyperbolic fixed point for $g$ with stable dimension
two.
\end{enumerate}
The diffeomorphism $g$ is clearly non-ergodic. Since the
center Lyapunov exponent is positive almost everywhere, we have $m(\Lambda^s(p))=0$.
We also have $m(\Lambda^u(p))=1$. Indeed, since $g$ is partially hyperbolic, all unstable manifolds grow exponentially fast. Now, almost every orbit is dense in
its ergodic component, so for almost every $x$ there is $k\in\ZZ$ such that
$W^u(g^k(x))\pitchfork W^s(p)\neq \emptyset$. Since $W^s(p)$ is invariant this implies that
$W^u(x)\pitchfork W^s(p)\neq \emptyset$.\par
Let us describe the perturbations we need to do in order to
obtain $g$.
We begin by taking a perturbation $f_1$ of $f$ as in the work of M. Shub and A. Wilkinson
(see \cite{shubwilkinson2000} and Appendix B of \cite{barreirapesin2001}):\par
Choose $f_1=f\circ j$, where the support of $j$ is contained in two small open sets
$U_1\subset \mathbb{T}^2\times (0,\frac12)$ and $U_2\subset
\mathbb{T}^2\times (\frac12,1)$. Take $U_1$ and $U_2$ so that they do
not intersect a small neighborhood of the circles $\{p\}\times \mathbb{S}^1$ and $\{q\}\times \mathbb{S}^1$.
The effect of this perturbation is that
$$
\int_{\mathbb{T}^2\times
(0,\frac12)}\log ||Df_1|_{E^c}|| dm>0\quad \mbox{and}\quad\int_{\mathbb{T}^2\times
(\frac12,1)}\log ||Df_1|_{E^c}|| dm>0$$
We will now perturb $f_1$ in a small neighborhood of
$\{p\}\times \mathbb{S}^1$ to obtain a diffeomorphism $f_2$. We want the dynamics of $f_2$
in $\{p\}\times \mathbb{S}^1$ to be
Morse-Smale with only two fixed points $(p,0)$ and $(p,\frac12)$. We want the resulting $f_2$ to be volume preserving.
In order to obtain this we will define, as in Appendix B of \cite{barreirapesin2001}, a divergence free vector
field $X$ supported in a small neighborhood of $\{p\}\times
\mathbb{S}^1$. Call $x,y$ the coordinates of $\mathbb{T}^2$ in a neighborhood of $p$
and $z$ the coordinate in $\mathbb{S}^1$.\par
The divergence free vector field we take is:
$$X(x,y,z)=\left( -\pi\cos(2\pi z)\psi'(y)\psi(x),
-\pi\cos(2\pi z)\psi'(x)\psi(y),\psi'(x)\psi'(y)\sin(2\pi z)\right)$$

where $\psi':\RR\rightarrow \mathbb{R}$ is an even bump function
such that $\psi'\equiv 1$ on a small interval $(-\eps,\eps)$, $\psi'(t)\equiv 0$ for $|t|>\eps_0$ with $\eps<\eps_0$, and $\psi(0)=\psi(\eps_0)=0$.\par
We obtain $f_2$ by composing $f_1$ with the time $t$ map of the flow generated
by $X$ for very small positive $t$. Observe that, since the
vector field is tangent to $\mathbb{T}^2\times \{0\}$ and
$\mathbb{T}^2\times \{\frac12\}$, these tori remain invariant by
$f_2$.\par
Finally, call $\Gamma(f_2)$ the union
of all non-open accessibility
classes contained in $\mathbb{T}^2\times [0,\frac12]$. By \cite{rhrhu2006} this set is a codimension-one
lamination tangent to $E^s\oplus E^u$, such that each lamina is an
accessibility class. There is a natural projection from $\Gamma(f_2)$ to
$\mathbb{T}^2\times \{0\}$ along the center leaves. This projection
is a covering projection when restricted to a single lamina. Then the
laminae are planes, cylinders or tori. In \cite{rhrhu2006} it is
proved that the laminae for which there is an open arc in the complement of
$\Gamma(f_2)$ with one endpoint in $\Gamma(f_2)$ are periodic, and the set of periodic points is dense in each lamina (even with the
intrinsic topology induced by the metric of the ambient manifold).
Since the dynamics of $f_2$ restricted to each lamina is Anosov, and planes and cylinders do not support such dynamics, then these leaves are tori. Now, we cannot have invariant tori
different from ${\mathbb T}^2\times\{0\}$ or ${\mathbb T}^2\times\{\frac12\}$, because they would intersect
$\{p\}\times \mathbb{S}^1$. So, we have a dichotomy: either
$\Gamma(f_2)=\mathbb{T}^2\times \{0\}\cup \mathbb{T}^2\times
\{\frac12\}$ or $\Gamma(f_2)=\mathbb{T}^2\times [0,\frac12]$. In the
first case we are done because this would imply that
$\mathbb{T}^2\times (0,\frac12)$ is an accessibility class and,
moreover, an ergodic component. In the second case,
we can use the Unweaving Lemma (Lemma A.4.3 of \cite{rhrhu2006}) to
obtain an open accessibility class perturbing $f_2$ in an arbitrary small
neighborhood of any fixed point of $\{q\}\times (0,\frac12)$. In this way we obtain a new diffeomorphism $f_3$. Since the above
mentioned dichotomy remains valid and there is an open accessibility
class we obtain that $\Gamma(f_3)=\mathbb{T}^2\times \{0\}\cup
\mathbb{T}^2\times \{\frac12\}$. The desired diffeomorphism $g$ is
$f_2$ in case $\Gamma(f_2)$ verifies the first equality of the
dichotomy, or $f_3$ in the other case. Finally, observe that since
$\mathbb{T}^2\times (0,\frac12)$ is an ergodic component and our
perturbations have been small enough to have that
$\int_{\mathbb{T}^2\times (0,\frac12)}\log ||Dg|_{E^c}|| dm >0,$
the center Lyapunov exponent is positive almost
everywhere.

\section{Proof of theorem \ref{teorema.prueba.conjetura}}\label{section.prueba.conjetura}
Let $f$ be a $C^{1+\alpha}$ partially hyperbolic diffeomorphism with two-dimensional center bundle, preserving a smooth measure $m$. We want to show that $f$ can be $C^1$-approximated by stably ergodic diffeomorphisms. As we stated in Section \ref{section.sketch.pugh.shub}, we shall prove the following:
\begin{proposition}\label{theorem.approximation}
Let $f$ be a $C^{1+\alpha}$ partially hyperbolic diffeomorphisms with two-dimensional center bundle, preserving a smooth measure $m$. Then either:
\begin{enumerate}
\item $f$ can be $C^1$-approximated by stably ergodic diffeomorphisms, or
\item $f$ can be $C^1$-approximated by diffeomorphisms satisfying the hypotheses of Theorem \ref{teorema.establemente.ergodico}.
\end{enumerate}
\end{proposition}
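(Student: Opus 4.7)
I would proceed by cases on the signs of the two center Lyapunov exponents $\lambda^-(x)\le\lambda^+(x)$, after first reducing to the stably accessible setting. By Theorem \ref{teorema.stable.accessibility} (Dolgopyat--Wilkinson), stable accessibility is $C^1$-dense, so an arbitrarily small $C^1$-perturbation makes $f$ stably accessible; both alternatives of the proposition are preserved by subsequent small perturbations, so this reduction is harmless.

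Now suppose the set $\{\lambda^+<0\}$ has positive measure: then both center exponents are negative there, and Theorem \ref{BDP} (Burns--Dolgopyat--Pesin) immediately yields alternative~(1). The symmetric case $m\{\lambda^->0\}>0$ is handled by applying Theorem \ref{BDP} to $f^{-1}$. In the remaining case, $\lambda^-\le 0\le\lambda^+$ almost everywhere. Here I invoke the combination of the results of Baraviera--Bonatti \cite{baraviera-bonatti2003} and Bochi--Viana \cite{bochi-viana2005}: by a $C^1$-perturbation one produces a diffeomorphism, still called $f$, whose center bundle admits a nontrivial dominated splitting $E^c=E^-\oplus E^+$ and such that the sets $C^-=\{\lambda^-<0\}$ and $C^+=\{\lambda^+>0\}$ have positive measure; since $\lambda^-\le 0$ and $\lambda^+\ge 0$ are preserved along the perturbation (or, more precisely, domination forces the right ordering of the new exponents), this also gives $\int\lambda^-\,dm<0<\int\lambda^+\,dm$. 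If one of $E^-$, $E^+$ were uniformly hyperbolic, $f$ would be partially hyperbolic with one-dimensional center, and the $C^1$-density of stable ergodicity in that setting (\cite{rhrhu2006}, \cite{burns-wilkinson2006}) puts us back in alternative~(1). So I may further assume that neither $E^-$ nor $E^+$ is hyperbolic.

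The final step is to manufacture the blenders demanded by hypothesis~(4) of Theorem \ref{teorema.establemente.ergodico}. Non-hyperbolicity of $E^\pm$, combined with Arnaud's conservative Ergodic Closing Lemma \cite{arnaud1998} and the conservative Franks' Lemma, allows a further $C^1$-perturbation producing three hyperbolic periodic points $p_0,p_1,p_2$ of unstable indices $u,\,u+1,\,u+2$, where $u=\dim E^u$. Applying Theorem \ref{proposition.blender.conservative} to the pair $(p_0,p_1)$ produces a $cu$-blender ${\rm Bl}^{cu}(p_1)$ at the continuation of $p_1$; applying it to $(p_1,p_2)$ in the reversed roles produces a $cs$-blender ${\rm Bl}^{cs}(p_1)$ at the same point. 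Since stable accessibility, the dominated splitting, the inequalities on the integrated center exponents, and the blender property are all $C^1$-robust, the resulting diffeomorphism lies in the hypotheses of Theorem \ref{teorema.establemente.ergodico}, which is alternative~(2).

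The main obstacle I expect is the bookkeeping in the last step: the successive perturbations must not destroy each other's outcomes. In particular, the Baraviera--Bonatti/Bochi--Viana perturbation giving domination and the mixed integral signs must be compatible with the Arnaud/Franks perturbation creating the three prescribed periodic indices, and then the two consecutive applications of Theorem \ref{proposition.blender.conservative} must each preserve accessibility, the dominated splitting, the sign of $\int\lambda^\pm\,dm$, and the blender already produced. Each individual robustness statement is known, but stringing them together so that every feature survives the whole chain of small $C^1$-perturbations is the delicate point of the argument.
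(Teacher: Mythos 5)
Your overall architecture is the paper's: reduce to stable accessibility via Theorem \ref{teorema.stable.accessibility}, dispose of the same-sign cases with Theorem \ref{BDP}, obtain a dominated center splitting with mixed integrated exponents via Baraviera--Bonatti and Bochi--Viana, fall back on the one-dimensional-center results when $E^-$ or $E^+$ is hyperbolic, create three periodic points of consecutive unstable indices with the Ergodic Closing Lemma and the conservative Franks' Lemma, and finish with two applications of Theorem \ref{proposition.blender.conservative} (your pairing of $(p_1,p_0)$ for the $cu$-blender and $(p_1,p_2)$, in reversed roles, for the $cs$-blender is exactly what is intended).

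Two of your intermediate justifications would fail as written, and in both cases the missing idea is the same: whenever a hypothesis of Theorem \ref{teorema.establemente.ergodico} cannot be arranged, a further perturbation must land you in the hypotheses of Theorem \ref{BDP}, i.e.\ in alternative (1). First, no $C^1$-perturbation ``produces'' a dominated splitting of $E^c$; the correct statement is a dichotomy. Either $E^c$ admits a dominated splitting, or --- using accessibility, so that the closed invariant sets $D(g,k)$ of points with $k$-dominated orbits have measure zero --- Proposition 4.17 of \cite{bochi-viana2005} yields a perturbation with $\int_M\lambda^-\,dm$ close to $\frac12\int_M(\lambda^++\lambda^-)\,dm$, which Theorem \ref{teo.baraviera.bonatti} has already made positive; then $\lambda^->0$ on a positive measure set and Theorem \ref{BDP} gives alternative (1). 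Second, the claim that ``$\lambda^-\le0$ and $\lambda^+\ge0$ are preserved along the perturbation'' is not available: pointwise Lyapunov exponents are highly discontinuous under $C^1$-perturbation, and only the integrals $\int_M\lambda^\pm\,dm=\int_M\log Jac(Dg|E^\pm)\,dm$ vary continuously once the splitting persists. Domination plus $\int_M(\lambda^++\lambda^-)\,dm>0$ does give $\int_M\lambda^+\,dm>0$, but for $\int_M\lambda^-\,dm<0$ one again needs a dichotomy: if instead $\int_M\lambda^-\,dm\ge0$, apply Theorem \ref{teo.baraviera.bonatti} to $E^-$ to make it strictly positive and conclude via Theorem \ref{BDP}. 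A smaller instance of the same phenomenon arises in producing the index-$(u+1)$ periodic point, where one needs an ergodic component $\mu$ with $\int\lambda^+\,d\mu>0$ and $\int\lambda^-\,d\mu\le0$; this exists unless $\lambda^->0$ on a positive-measure subset of $\{\lambda^+>0\}$, in which case Theorem \ref{BDP} applies once more. With these dichotomies inserted, your argument coincides with the paper's proof.
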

The Conjecture of C. Pugh and M. Shub will follow by proving:
\setcounter{maintheorema}{2}
\begin{maintheorema}
Let $g$ be a $C^{1+\alpha}$ partially hyperbolic diffeomorphism preserving a smooth measure, and such that its center bundle is two-dimensional. Assume $f$ satisfies the following properties:
 \begin{enumerate}
 \item $g$ is {\em accessible},
 \item  $E^c = E^- \oplus E^+$ admits a nontrivial dominated splitting
  \item $\int_M \lambda^- d m < 0 $ and $\int_M \lambda^+ d m > 0,$
 \item $g$ admits a $cs$-blender ${\rm Bl}^{cs}(p)$ and a $cu$-blender ${\rm Bl}^{cu}(p)$ associated to a hyperbolic periodic point of stable index $(s+1)$.
  \end{enumerate}
  then $g$ is stably ergodic.
\end{maintheorema}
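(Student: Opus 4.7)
The plan is to show that for every $g'$ in a sufficiently small $C^1$-neighborhood $\cU$ of $g$, the diffeomorphism $g'$ is ergodic, by applying Theorem \ref{main.theorem} to the analytic continuation $p_{g'}$ of $p$ and showing $\Lambda(p_{g'})\circeq M$. Write $C^+(g')=\{x\in\cR(g'):\lambda^+(g',x)>0\}$ and $C^-(g')=\{x\in\cR(g'):\lambda^-(g',x)<0\}$ for the sign-sets of the center Lyapunov exponents along the two sub-bundles of the dominated splitting. The strategy is to use the two blenders to force $C^+(g')\subset\Lambda^u_{g'}(p_{g'})$ and $C^-(g')\subset\Lambda^s_{g'}(p_{g'})$; the dominated splitting then gives $m(C^+(g')\cup C^-(g'))=1$, and Theorem \ref{main.theorem} closes the argument.

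\textbf{Robustness of the hypotheses.} Partial hyperbolicity, the dominated splitting $E^c=E^-\oplus E^+$, and the hyperbolic continuation $p_{g'}$ of $p$ (of stable index $s+1$) with its local invariant manifolds all persist on $\cU$ by standard $C^1$-openness, with a uniform $k$-domination gap. By the very definition of blender, the intersection property of ${\rm Bl}^{cs}(p_{g'})$ and ${\rm Bl}^{cu}(p_{g'})$ is $C^1$-robust. Accessibility is not $C^1$-open, but Remark \ref{remark.eps.accessibility} combined with Theorem \ref{teorema.accesible.orbita.densa} ensures, after possibly shrinking $\cU$, that for any prescribed $\eps>0$ smaller than the inner radius of the blenders, $m$-a.e. $g'$-orbit is $\eps$-dense in $M$.

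\textbf{The blender inclusions.} I would prove $C^+(g')\subset\Lambda^u_{g'}(p_{g'})$ as follows. Fix $x\in C^+(g')$ that is a Lebesgue density point of some Pesin block $\cR_{\eps,l}$ visited infinitely often by $x$ (Poincar\'e recurrence). Then $W^u(x)$ is an immersed $(u+1)$-disk with tangent bundle $E^u\oplus E^+$, and the strong unstable foliation $\W^{uu}$ cuts out $u$-disks tangent to $E^u$ through every point of $W^u(x)$. Choose $n$ large with $g'^{n}(x)\in\cR_{\eps,l}\cap{\rm Bl}^{cu}(p_{g'})$; both events recur along $m$-a.e.\ orbit. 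The strong unstable disk through $g'^{n}(x)$ of any fixed intrinsic radius grows exponentially under $g'$, so for $n$ large enough it is much larger than the diameter of ${\rm Bl}^{cu}(p_{g'})$; a local neighborhood of $g'^{n}(x)$ in $W^u(g'^{n}(x))$ is therefore a $(u+1)$-strip well placed in ${\rm Bl}^{cu}(p_{g'})$. The blender property yields $W^u(g'^{n}(x))\ti W^s(p_{g'})\neq\emptyset$, whence $g'^{n}(x)\in\Lambda^u_{g'}(p_{g'})$ and, by invariance, $x\in\Lambda^u_{g'}(p_{g'})$. The $cs$-blender yields the symmetric inclusion $C^-(g')\subset\Lambda^s_{g'}(p_{g'})$ by running the same argument for $g'^{-1}$.

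\textbf{Conclusion and main obstacle.} Uniform $k$-domination gives $\lambda^+(g',x)-\lambda^-(g',x)\ge \log 2 / k >0$ a.e., so $m(C^+(g')\cup C^-(g'))=1$. Provided $m(C^+(g'))>0$ and $m(C^-(g'))>0$ for every $g'\in\cU$, the steps above give $m(\Lambda^u_{g'}(p_{g'}))>0$ and $m(\Lambda^s_{g'}(p_{g'}))>0$, so Theorem \ref{main.theorem} applies: $\Lambda^u_{g'}(p_{g'})\circeq\Lambda^s_{g'}(p_{g'})\circeq\Lambda(p_{g'})$ is a hyperbolic ergodic component containing mod zero both $C^+(g')$ and $C^-(g')$, hence $\Lambda(p_{g'})\circeq M$ and $g'$ is ergodic. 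The hard part is the persistence of $m(C^\pm(g'))>0$ throughout $\cU$: integrated Lyapunov exponents are only upper semi-continuous in the $C^1$ topology, so $\int\lambda^+(g')\,dm$ could a priori drop to zero under a small perturbation, and likewise $\int\lambda^-(g')\,dm$ could rise to zero. I expect to handle this by exploiting the hyperbolicity of $p_{g'}$, whose center exponents have definite signs ($E^-$ contracting and $E^+$ expanding at $p_{g'}$), together with the absolute continuity of the strong foliations, to produce uniformly on $\cU$ a positive measure set of points with $\lambda^\pm$ of the required sign inside a fixed open neighborhood of $p_{g'}$.
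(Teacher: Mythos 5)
Your overall architecture coincides with the paper's: use the blenders plus recurrence/density of orbits to get $C^+\subset\Lambda^u(p)$ and $C^-\subset\Lambda^s(p)$, use the dominated splitting to get $m(C^+\cup C^-)=1$, and invoke Theorem \ref{main.theorem} to conclude ergodicity; robustness of the blender property and of $\eps$-dense orbits then upgrades this to stable ergodicity. The blender-inclusion step is carried out essentially as in the paper.

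The genuine gap is exactly where you flag it: the persistence of $m(C^\pm(g'))>0$ on a neighborhood of $g$. Your proposed fix --- exploiting the hyperbolicity of $p_{g'}$ together with absolute continuity of the strong foliations to manufacture a positive measure set of points with center exponents of definite sign near the periodic orbit --- does not work as stated: the periodic orbit has measure zero, its definite center exponents say nothing about the Lyapunov exponents of a positive measure set of nearby points, and absolute continuity of the strong foliations transports measure, not Lyapunov exponent data. Moreover your premise that the integrated exponents are merely semi-continuous misses the special structure here. Since $E^c$ is two-dimensional, each factor $E^\pm$ of the dominated splitting is a \emph{one-dimensional} continuous $Dg'$-invariant bundle, so $\lambda^\pm(g',x)$ is the Birkhoff average of the continuous function $x\mapsto\log\|Dg'(x)|E^\pm_x\|$, and hence
$$\int_M\lambda^{\pm}(g',x)\,dm(x)=\int_M\log\bigl\|Dg'(x)|E^{\pm}_x\bigr\|\,dm(x),$$
which depends continuously on $g'$ in the $C^1$ topology because $E^\pm_x$ does. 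Thus hypothesis (C.3) is a $C^1$-open condition and $m(C^\pm(g'))>0$ for all $g'$ in a neighborhood of $g$; this is the one-line observation the paper uses (stated there via $\int_M\lambda^+\,dm=\int_M\log Jac(Dg|E^+_x)\,dm$), and it replaces the speculative last step of your argument.
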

We devote Section \S\ref{subsection.approximation} to proving Proposition \ref{theorem.approximation} and Section \S\ref{subsection.establemente.ergodico} to proving Theorem \ref{teorema.establemente.ergodico}.
\subsection{Proof of Proposition \ref{theorem.approximation}} \label{subsection.approximation}
Let us show that either $f$ is already known to be $C^1$-approximated by stably ergodic diffeomorphisms or else $f$ is $C^1$-approximated by diffeomorphisms in the hypotheses of Theorem \ref{teorema.establemente.ergodico}.\par
The following theorem implies that $f$ is $C^1$-approximated by open sets of diffeomorphisms satisfying hypothesis (C.1):
\begin{theorem}[D. Dolgopyat, A. Wilkinson \cite{dolgopyat-wilkinson2003}] Stable accessibility is $C^1$-dense among partially hyperbolic diffeomorphisms, volume preserving or not.
\end{theorem}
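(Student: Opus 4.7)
The plan is to reduce $C^1$-denseness of stable accessibility to a local perturbation result saying that if $f$ fails to be accessible, then arbitrarily small $C^1$-perturbations create an open accessibility class, which is itself a $C^1$-open property. The framework is the standard one: define the \emph{accessibility class} $\mathrm{AC}(x)$ as the set reached from $x$ by concatenating finitely many arcs in $W^{ss}$- or $W^{uu}$-leaves, i.e.\ \emph{$su$-paths}. Accessibility means $\mathrm{AC}(x)=M$ for some (equivalently, every) $x$. Since the strong foliations vary continuously with $f$ in the $C^1$ topology (on compact pieces), the existence of an $su$-loop that produces a given open accessibility class is $C^1$-robust. Hence the task reduces to producing, by a $C^1$-small perturbation, at least one point whose accessibility class contains a nonempty open set; standard arguments using the accessibility equivalence relation then propagate openness of one accessibility class to global accessibility, possibly after a further small perturbation.

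The main steps I would carry out are the following. First, fix a hyperbolic periodic point $p$ for $f$ (dense by the closing lemma; the partial hyperbolicity assumption gives such points in abundance) and a local product chart near $p$ in which $W^{ss}$ and $W^{uu}$ are straightened. Second, choose a small \emph{$su$-loop} $\gamma=\gamma_1*\cdots*\gamma_{2k}$ based at $p$, consisting of alternating short stable and unstable arcs. The loop $\gamma$ has an associated holonomy map $H_\gamma$ defined on a small center transversal $\Sigma^c$ to $\mathrm{AC}(p)$ inside a center plaque; accessibility of $p$ corresponds to the orbit under all such $H_\gamma$ of $p\in\Sigma^c$ being dense in $\Sigma^c$. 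Third, and this is the core of the argument, I perturb $f$ in $C^1$ by a diffeomorphism $\phi$ supported in a small ball disjoint from $p$ and from most of $\gamma$, chosen so that $\phi$ preserves the $W^{ss}$- and $W^{uu}$-foliations outside its support and alters the holonomy along one of the unstable segments $\gamma_i$ by a prescribed center displacement. Concretely, one uses the local product coordinates to write a bump diffeomorphism that slides $W^{uu}$-leaves in the center direction without destroying partial hyperbolicity, then shows that the perturbed holonomy $H_{\gamma,\phi}$ differs from $H_\gamma$ by a $C^1$-controlled translation in the center.

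Given such a family of perturbations, the plan is to argue that a generic choice within a finite-parameter family makes the iterated holonomy group act with a nontrivial open orbit on $\Sigma^c$. With $\dim E^c\ge 1$, one such nontrivial translation already opens up a one-dimensional arc inside the accessibility class of $p$; composing with powers of the return map of the periodic orbit, and exploiting the invariance of $\mathrm{AC}(p)$, the arc spreads to an open piece of $\Sigma^c$. When $\dim E^c>1$ one iterates the procedure coordinate by coordinate, producing independent center translations in each direction and thereby an open set in $\Sigma^c$; saturating by the strong foliations gives a nonempty open subset of $\mathrm{AC}(p)$. Finally, I would invoke the continuity of strong foliations to observe that the loop $\gamma_\phi$ and its holonomy persist for all $g$ in a $C^1$-neighborhood $\mathcal{U}$ of the perturbation, so every $g\in\mathcal{U}$ has an open accessibility class; a last small perturbation (again by the same localized technique) merges any remaining classes. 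In the volume-preserving case, one takes $\phi$ to be the time-one map of a compactly supported divergence-free vector field, so that $f\circ\phi$ remains volume preserving; the rest of the argument is identical.

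The main obstacle is the construction in the middle step: producing a genuinely $C^1$-small perturbation whose effect on the $su$-holonomy is both nontrivial and analytically controllable, while leaving partial hyperbolicity and (in the conservative case) the invariant volume intact. The delicate point is that $\phi$ moves the leaves of $W^{ss}$ and $W^{uu}$ in complicated ways, and one must show that the net center displacement produced by traversing $\gamma$ in the perturbed system is nonzero, to first order, in the parameter of $\phi$. This is done by a careful computation of the derivative of the holonomy with respect to the perturbation, using the local product structure and the fact that the perturbation support can be chosen to meet $\gamma$ transversally in exactly one segment; if this derivative vanishes identically one varies the choice of $\gamma$ or the support of $\phi$ to make it nonzero. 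Once this transversality is established, the rest of the argument reduces to soft topology and the $C^1$-continuity of the strong foliations.
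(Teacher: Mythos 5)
This statement is quoted from Dolgopyat--Wilkinson \cite{dolgopyat-wilkinson2003}; the paper gives no proof of it, so your proposal can only be judged against the original argument. Your overall philosophy --- localized $C^1$-perturbations that modify the center displacement of $su$-holonomies, robustness via $C^0$-continuity of the strong foliations, divergence-free vector fields in the conservative case --- is indeed the spirit of their proof. But there are two genuine gaps. The first and most serious is the globalization step: you pass from ``the accessibility class of $p$ contains an open set'' to ``$f$ is accessible'' by invoking ``standard arguments'' and ``a last small perturbation'' that merges the remaining classes. This is not standard, and the implication is false as stated: a single open accessibility class can coexist with a codimension-one lamination of non-open classes, as in \S\ref{subsection.example} of this very paper, where $\mathbb{T}^2\times(0,\frac12)$ is an open accessibility class of a non-accessible diffeomorphism; there is no soft argument that merges what may be uncountably many classes by one perturbation. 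Dolgopyat and Wilkinson avoid this issue entirely: they first close a recurrent orbit into a periodic orbit that is $\delta$-dense in $M$, then perform their local perturbations along that orbit so that the accessibility class of $p$ engulfs, with uniform estimates, a ball of radius $\delta$ around \emph{every} point of the orbit; since the orbit is $\delta$-dense these balls cover $M$, and the uniformity of the estimates is what yields \emph{stable} accessibility. The tension between the length of the period (needed for $\delta$-density) and the size of the engulfed balls is the quantitative heart of their paper and is absent from your outline.

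The second gap is the one you yourself flag as ``the main obstacle'': the perturbation $\phi$ changes the invariant strong foliations globally, because they are dynamically defined rather than local objects, so the effect of $\phi$ on the $su$-holonomy of the \emph{perturbed} map cannot be read off from local product coordinates of the unperturbed map; one only knows a priori that the new foliations are $C^0$-close. Producing a nontrivial, first-order-controlled center displacement of the new holonomy is precisely the technical content of \cite{dolgopyat-wilkinson2003}, and remarking that ``if this derivative vanishes identically one varies the choice of $\gamma$'' does not substitute for the estimate. Minor further points: periodic points need not pre-exist for a general partially hyperbolic diffeomorphism (their creation already costs a $C^1$-perturbation via the closing lemma, conservative or not), and density of the holonomy orbit in $\Sigma^c$ is strictly weaker than openness of the accessibility class, which is what you actually need and which, for $\dim E^c>1$, requires $\dim E^c$ independent center displacements together with a degree or transversality argument carried out with uniform constants.
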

So, we may consider from now on a volume preserving $C^{1+\alpha}$ partially hyperbolic diffeomorphism $f_1$ that is stably accessible and is $C^1$ close to $f$.\newline\par
Let us assume that $f_1$ does not admit a non-trivial dominated splitting of its center bundle.
Call, for all $g$ $C^1$-near $f_1$,
$\lambda_g^-(x)\leq\lambda_g^+(x)$ the central Lyapunov exponents of
$x$ with respect to $g$, and recall that
$$\int_M (\lambda_g^-(x)+\lambda^+_g(x))\,dm (x)=\int_M \log Jac (Dg(x)|E^c_x)\,dm(x)$$
Observe that this amount depends continuously on $g$, due to
continuity of $E^c_x$ with respect to $g$. We lose no generality in assuming that:
$$\int_M(\lambda^+_{f_1}(x)+\lambda^-_{f_1}(x))\,dm(x)\geq 0$$
We use the following
\begin{theorem}[A. Baraviera, C. Bonatti
\cite{baraviera-bonatti2003}]\label{teo.baraviera.bonatti}
 Let
$f$ be a $C^1$ partially hyperbolic diffeomorphism, then there are
arbitrarily small $C^1$-perturbations $g$ of $f$ such that
$$\int_M\log Jac(Dg(x)|E^c_x)dm(x) >\int_M\log Jac(Df(x)|E^c_x)dm(x). $$
\end{theorem}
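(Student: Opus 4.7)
The strategy is variational: view
\[
\Phi(g) \;:=\; \int_M \log\bigl|\det(Dg|_{E^c_g})\bigr|\,dm
\]
as a functional on the space of $C^1$ conservative partially hyperbolic diffeomorphisms, and produce at any $f$ a $C^1$-small perturbation $g$ with $\Phi(g)>\Phi(f)$, so that $f$ is never a local maximum. The key algebraic input is volume preservation, which, combined with the $Dg$-invariant splitting $E^s_g\oplus E^c_g\oplus E^u_g$, gives the identity
\[
\Phi(g) \;=\; -\int_M\!\log\bigl|\det(Dg|_{E^s_g})\bigr|\,dm \;-\;\int_M\!\log\bigl|\det(Dg|_{E^u_g})\bigr|\,dm .
\]
Thus increasing the integrated center Jacobian is equivalent to jointly \emph{weakening} the integrated strong stable/unstable expansion rates, and it is this hyperbolic quantity that we shall attack directly.

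\textbf{Local perturbation.} First I would pick a non-periodic Birkhoff-recurrent point $x_0$ and a small ball $U\ni x_0$ whose forward and backward $f$-iterates are pairwise disjoint for $|k|\le N$, with $N$ large. Such points are dense by Poincar\'e recurrence. On $U$ I would take a smooth divergence-free vector field $X$ with prescribed first jet $DX(x_0)=A$, where $A\in\mathfrak{sl}(T_{x_0}M)$ is a traceless linear map to be chosen. Let $\phi_t$ be its time-$t$ flow (volume preserving, supported in $U$, $C^1$-small with $t$), and set $g_t := \phi_t\circ f$. Because the orbit of $U$ is essentially disjoint from itself for $2N$ iterates, the dynamics of $g_t$ differs from that of $f$ only along one short orbit piece, which localizes the calculation.

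\textbf{First-order variation.} Next I would differentiate $\Phi(g_t)$ at $t=0$. The invariant subbundles $E^\sigma_{g_t}$ are characterized as fixed points of the graph transform and therefore depend $C^0$-continuously and (locally) differentiably on $g_t$; this allows to write
\[
E^u_{g_t}(x)\;=\;E^u_f(x)+t\,\xi^u(x)+o(t),\qquad E^s_{g_t}(x)\;=\;E^s_f(x)+t\,\xi^s(x)+o(t),
\]
where the correction vectors $\xi^s,\xi^u$ vanish outside a thin tube around the orbit of $x_0$. Using $Dg_t=D\phi_t\cdot Df$ and the multiplicative cocycle structure of the Jacobian, the derivative $\frac{d}{dt}\bigl|_{t=0}\!\Phi(g_t)$ reduces to a finite sum of local contributions supported near $\{f^k(x_0)\}_{|k|\le N}$; by volume preservation these contributions collapse to a single expression involving the trace of $A$ restricted to the hyperbolic subspaces, i.e.\ something of the form
\[
\mathrm{tr}\bigl(A|_{E^u_f(x_0)}\bigr)\;+\;\mathrm{tr}\bigl(A|_{E^s_f(x_0)}\bigr)
\]
up to a nonzero Radon--Nikodym factor. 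Since $A$ is traceless but the splitting $E^s\oplus E^c\oplus E^u$ is non-trivial, one can clearly choose $A$ so this quantity is nonzero; e.g.\ take $A$ to be a small rotation in a $2$-plane spanned by a vector of $E^u_f(x_0)$ and one of $E^c_f(x_0)$ (which tilts the would-be unstable direction into the less-expanded center and so decreases the integrated unstable Jacobian). Choosing the sign of $t$ appropriately then gives $\Phi(g_t)>\Phi(f)$.

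\textbf{Main obstacle.} The technical heart is justifying the first-order expansion of $E^\sigma_{g_t}$ and proving that the resulting variational formula is genuinely non-degenerate for an appropriate traceless $A$. Subbundles are global objects, so one must control the telescoping series that expresses $\xi^{s,u}$ as a sum of contributions carried along the orbit; this is where the disjointness of the iterates of $U$ is crucial, because it reduces an a priori infinite computation to a finite one and ensures that the contributions do not cancel unexpectedly. Once the variational formula is established, the argument concludes cleanly: the tracelessness constraint on $A$ leaves enough directions to make $\mathrm{tr}(A|_{E^u\oplus E^s})\neq 0$, as long as $\dim E^c\ge 1$, which is exactly the partial hyperbolicity assumption.
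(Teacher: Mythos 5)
First, note that the paper does not prove this statement: it is quoted verbatim from Baraviera--Bonatti \cite{baraviera-bonatti2003}, so your proposal has to be measured against their argument. Your reduction via volume preservation ($\int\log Jac|_{E^s}+\int\log Jac|_{E^c}+\int\log Jac|_{E^u}=0$, since the sum of all integrated exponents vanishes) and the idea of a local conservative perturbation mixing $E^c$ with $E^u$ are both the right starting point. The gap is in the ``first-order variation'' step, and it is fatal in two ways. (i) Your claimed derivative $\mathrm{tr}(A|_{E^u})+\mathrm{tr}(A|_{E^s})$ is internally inconsistent with your own choice of $A$: for a rotation generator $J$ in a $2$-plane one has $\langle Jv,v\rangle=0$ for every $v$, hence $\mathrm{tr}(\pi_V J|_V)=0$ for \emph{every} subspace $V$, so your formula outputs zero precisely for the perturbation you propose. (ii) More seriously, no first-order argument can prove the theorem: the genuine first-order term of $t\mapsto\int\log Jac(Dg_t|_{E^u_{g_t}})\,dm$ is $\int_U \mathrm{tr}\bigl(\pi_{E^u}DX|_{E^u}\bigr)dm$ (an ``unstable divergence''), and in the basic model cases --- e.g.\ perturbations of a linear partially hyperbolic automorphism, where $E^s,E^u$ are constant bundles --- this vanishes for \emph{every} divergence-free compactly supported $X$, by Fubini along the affine unstable leaves. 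The theorem must nevertheless hold there, so the mechanism is necessarily of second order.

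That second-order mechanism is exactly the heart of Baraviera--Bonatti, and it is the piece missing from your plan: tilt $E^u$ into $E^c$ by a rotation of angle $\theta$ inside the ball $U$; domination forces the tilted direction to realign with $E^u$ under forward iteration, and during the realignment the accumulated expansion along the new invariant direction is reduced by a definite factor $1-c\,\theta^2$ (your own heuristic ``tilts the would-be unstable direction into the less-expanded center'' is correct, but the loss is quadratic, not linear, in $\theta$, because $E^u$ \emph{maximizes} the asymptotic volume growth among nearby $(u)$-planes, so it is a critical point of the relevant functional). The correct bookkeeping is also different from what you describe: the perturbation of $E^u_g$ does not vanish outside ``a thin tube around the orbit of $x_0$'' nor reduce to a finite sum near $\{f^k(x_0)\}_{|k|\le N}$; it propagates along the forward orbit of \emph{every} point of $U$, i.e.\ on $\bigcup_{k\ge0}f^k(U)$, which typically has full measure. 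One instead sums the (exponentially decaying, by domination) per-step losses along each such orbit into a convergent series bounded below by $c\,\theta^2$, and integrates over $y\in U$ using the invariance of $m$, obtaining a net decrease of $\int\log Jac|_{E^u}$ of order $\theta^2 m(U)$ while $\int\log Jac|_{E^s}$ is unchanged (the rotation fixes $E^s$). To repair your proof you would need to discard the first-order variational framework and prove this realignment lemma, which is the actual content of \cite{baraviera-bonatti2003}.
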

\noindent to obtain a $C^{1+\alpha}$ partially hyperbolic diffeomorphism $g$ preserving $m$, $C^1$ close to $f_1$ such that
\begin{equation}\label{suma.expoentes.positiva}
 \int_M ( \lambda^{-}_{g} (x) +  \lambda^{+}_{g} (x))\, dm (x) > 0.
\end{equation}
Assume $g$ does not admit a non-trivial dominated splitting $E^c=E^-\oplus E^+$, otherwise we would have obtained a diffeomorphism satisfying (C.1) and (C.2).
Let $D(g,k)$ be the set of points $x$ such that there is a
nontrivial $k$-dominated splitting of $E^c$ along the orbit of $x$. And denote
$D(g)=\bigcup_{k=1}^\infty D(g,k)$, the set of points such that there is a non-trivial dominated splitting along their orbits. \par
Recall that the hypothesis of accessibility implies that the orbit
of almost every point is dense \cite{burns-dolgopyat-pesin2002}.
Hence, $m(D(g,k))=0$ for every $k>0$, otherwise $D(g,k)$ would contain a dense orbit and, since it is a closed invariant set it would be $M$, $g$ would then admit a dominated splitting for $E^c$.
We obviously have $m(D(g))=0$, so by
Proposition 4.17 of \cite{bochi-viana2005}, we can
conclude that for each $\delta>0$ there is a $C^{1+\alpha}$ conservative
 diffeomorphism $h$ arbitrarily $C^{1}$-close to $g$ such that
\begin{align*}
\int_{M} \lambda^-_h (x) dm(x)
 &\geq  \int_M \lambda^-_g(x) dm(x) + \frac 12\int_{M\setminus D(g)}( \lambda^+_g( x) -
 \lambda^-_g( x))dm(x)  - \delta  \\
&=
 \frac12\int_{M} ( \lambda^+_g( x) +  \lambda^-_g(x)) dm(x) - \delta
 \end{align*}
Choosing a suitable $\delta>0$, we obtain $h$ such that $ \int_M  \lambda^-
_h(x)dm(x)>0 $. Hence, there exists a subset $A$ of $M$
with positive Lebesgue measure such that $\lambda^-_h (x) > 0$ for
all $x \in A$. And $\lambda^+ _h(x)\geq \lambda^-(x) > 0$ for $ x \in A$. Recall that $h$ has the accessibility property. Theorem \ref{BDP} then implies that $h$ is stably ergodic. \newline\par
We have found a $C^{1+\alpha}$ conservative partially diffeomorphism $f_2$, $C^1$ close to $f$, satisfying (C.1) and (C.2). Moreover, $f_2$ is stably accessible.
Note that (\ref{suma.expoentes.positiva}) implies that $\int_M\lambda^+_{f_2}(x)\,dm(x)>0$.
Now, either $f_2$ satisfies (C.3), or else $\int_M\lambda^-_{f_2}(x)\geq 0$.
Theorem \ref{teo.baraviera.bonatti} above applied to the bundle $E^-$ implies that there is a
$C^1$ perturbation $g$ of $f_2$ so that $\int_M\lambda^-_g(x)\,dm(x)>0$. Theorem \ref{BDP} applies, and we obtain that $g$ is stably ergodic. \newline\par
So far, we have found a $C^{1+\alpha}$ diffeomorphism $f_3$ that is $C^1$ close to $f$ such that either $f_3$ is $C^1$-approximated by stably ergodic diffeomorphisms or $f_3$ is in a neighborhood of diffeomorphisms satisfying (C.1), (C.2) and (C.3).
We want to find a $C^1$-perturbation $h\in C^{1+\alpha}$ of $f_3$, preserving $m$ and admitting a $cs$-blender ${\rm Bl}^{cs}(p)$ and a $cu$-blender ${\rm Bl}^{cu}(p)$ associated to a hyperbolic periodic point $p$ of stable index
$(s+1)$. Let us begin by proving:
\begin{lemma} \label{ergodicclosing}
Let $f_3$ be a $C^{1+\alpha}$ partially hyperbolic diffeomorphisms satisfying hypotheses (C.1), (C.2) and (C.3). Let us further assume that $f_3$ is stably accessible. Then,
either
\begin{enumerate}
\item $f_3$ is $C^1$-approximated by a stably ergodic diffeomorphism,
or \item $f_3$ is $C^1$-approximated by $g\in C^{1+\alpha}$ preserving $m$ and having
three hyperbolic periodic points with unstable indices $u$, $(u+1)$ and $(u+2)$, respectively,
where $u=\dim E^u$.
\end{enumerate}
\end{lemma}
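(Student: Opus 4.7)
The plan is to split the argument according to whether the center subbundles $E^-$ and $E^+$ furnished by hypothesis (C.2) are themselves uniformly hyperbolic. If $E^-$ is uniformly contracting then $TM = (E^s\oplus E^-) \oplus E^+ \oplus E^u$ is a partially hyperbolic splitting of $f_3$ with one-dimensional center $E^+$, and the main theorem of \cite{rhrhu2006} supplies a $C^1$-approximation of $f_3$ by stably ergodic diffeomorphisms, placing us in alternative (1). The case where $E^+$ is uniformly expanding is symmetric, so from now on I assume neither $E^-$ nor $E^+$ is uniformly hyperbolic.

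A standard consequence of the theory of dominated splittings (of Ma\~n\'e--Cao type) then produces ergodic $f_3$-invariant measures $\nu_0$ with $\lambda^+_{\nu_0}\le 0$ and $\nu_2$ with $\lambda^-_{\nu_2}\ge 0$; by domination, both center exponents of $\nu_0$ are non-positive and both of $\nu_2$ are non-negative. From hypothesis (C.3) and the ergodic decomposition of $m$, either some ergodic component $\nu_1$ of $m$ satisfies $\lambda^-_{\nu_1} < 0 < \lambda^+_{\nu_1}$, or else every ergodic component has both center exponents of the same sign; in the second sub-case I set $\nu_1 := \nu_2$ and plan to recover an index-$(u+1)$ orbit by a further local perturbation.

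Applying Arnaud's conservative Ergodic Closing Lemma at typical points of $\nu_0,\nu_1,\nu_2$, I obtain three pairwise disjoint, a priori possibly non-hyperbolic periodic orbits whose center Lyapunov exponents approximate those of the three measures. The Conservative Franks' Lemma then provides, in a small neighbourhood of each orbit, an arbitrarily $C^1$-small volume-preserving perturbation that replaces the tangent return cocycle along the orbit by any prescribed nearby volume-preserving linear cocycle. I use this freedom to make both center eigenvalues of the first orbit strictly contracting, both of the third strictly expanding, and those of the middle orbit strictly of opposite signs; in the degenerate sub-case this reduces to flipping the sign of a single center eigenvalue of the orbit inherited from $\nu_2$. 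Choosing the three perturbation supports pairwise disjoint, I combine the three perturbations into a single $C^{1+\alpha}$ volume-preserving diffeomorphism $g$, $C^1$-close to $f_3$, carrying the required hyperbolic periodic points $p_0,p_1,p_2$ of unstable indices $u$, $u+1$, $u+2$, thus realising alternative (2).

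The main delicate point I anticipate is the second sub-case of paragraph two: producing $p_1$ of index $u+1$ when $m$ has no ergodic component with strictly mixed-sign center exponents. Here the argument leans on the fact that the dominated splitting $E^-\oplus E^+$ of the two-dimensional $E^c$ is $C^1$-robust along the periodic orbit in question, so that within the Franks' perturbation the two center eigenvalues can be prescribed independently and the sign of exactly one of them flipped, without disturbing the adjustments previously made at $p_0$ and $p_2$.
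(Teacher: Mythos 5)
Your reduction to the case where neither $E^-$ nor $E^+$ is uniformly hyperbolic, and your construction of $p_0$ and $p_2$ by closing up generic points of invariant measures with $\int\lambda^+\le 0$ and $\int\lambda^-\ge 0$ and then applying the conservative Franks Lemma, match the paper's argument. The genuine gap is in your second sub-case for the middle point $p_1$. Setting $\nu_1:=\nu_2$ and ``flipping the sign of a single center eigenvalue'' by a Franks-type perturbation does not work: a conservative $\eps$-perturbation of the derivative along a periodic orbit changes each center Lyapunov exponent of that orbit by at most $\log(1+\eps)$, uniformly in the period (the total multiplicative correction over the orbit is at most $(1+\eps)^{\pi}$). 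Since $\lambda^-_{\nu_2}$ is only known to be $\ge 0$, and may well be bounded away from zero, the closed orbit inherited from $\nu_2$ can have unstable index $u+2$ robustly, and no arbitrarily small perturbation lowers its index. (Your dichotomy is also not exhaustive as stated --- ergodic components with exactly one vanishing center exponent are neither ``mixed sign'' nor ``same sign'' --- although those are in fact the easy cases for closing.)

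The missing idea is that the problematic sub-case is exactly the one already absorbed by alternative (1). Hypothesis (C.3) gives $m(A)>0$ for $A=\{\lambda^+>0\}$. If $\lambda^->0$ on a positive measure subset of $A$, then accessibility together with Theorem \ref{BDP} (applied to the inverse) makes the diffeomorphism stably ergodic, and you land in case (1) with no need for $p_1$. Otherwise $\lambda^-\le 0$ a.e.\ on $A$, so some ergodic component $\mu$ of $m|_A$ satisfies $\int\lambda^+\,d\mu>0$ and $\int\lambda^-\,d\mu\le 0$; closing up a $\mu$-typical point via Theorem \ref{ergodiclosing} yields a periodic orbit whose exponent in the $E^+$ direction is bounded below away from zero while the exponent in the $E^-$ direction is only slightly above a non-positive number, and now Proposition \ref{franks} legitimately nudges the latter below zero, producing the index-$(u+1)$ point. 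This is how the paper obtains $p_1$; the measures you call $\nu_0$ and $\nu_2$ should be reserved for $p_0$ and $p_2$ only.
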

\begin{proof}
Let $E^c=E^-\oplus E^+$ be the dominated splitting for the center bundle of $f_3$.
If $E^-$ or $E^+$ were hyperbolic, then $f_3$ could be seen as a
partially hyperbolic diffeomorphism with one-dimensional center bundle, and hence $f_3$ would be
known to be approximated by stably ergodic diffeomorphisms, see \cite{burns-wilkinson2006}, \cite{rhrhu2006}.\par
Assuming that $E^-$ nor $E^+$ are hyperbolic, we can find points $x^+_n,x^-_n\in M$ and sequences of integers $k_n,l_n\uparrow \infty$ such that
$$\frac1{k_n}\log\|Df^{k_n}(x^+_n)|E^+_{x_n}\|\geq-\frac{1}{n}\quad\mbox{and}\quad \frac1{l_n}\log\|Df^{l_n}(x^-_n)|E^-_{x_n}\|\leq\frac{1}{n}$$
Let us call
$\mu^+_n=\frac{1}{k_n}\sum_{j=0}^{k_n-1}\delta_{f^j(x^+_n)}$ and $\mu^-_n=\frac{1}{l_n}\sum_{j=0}^{l_n-1}\delta_{f^j(x^-_n)}$ the
probability measures supported, respectively, on the pieces of orbit
$\{f^j(x^+_n)\}_{j=0}^{k_n-1}$ and $\{f^j(x^-_n)\}_{j=0}^{l_n-1}$. Then there are subsequences, which
we continue to call $\mu^+_n, \mu^-_n$, such that $\mu^+_n\to\mu^+$ and $\mu^-_n\to\mu^-$ in
the weak-topology. We have
$$\int_M\lambda^+(x)d\mu^+(x)=\int_M\log\|Df_x|E^+_x\|d\mu^+(x)\leq 0
$$ Let $\mu^+_*$ be an ergodic component of $\mu^+$ such that
$\int_M\lambda^+(x)d\mu^+_*(x)\leq 0$. And let us recall the following:
\begin{theorem}[Ergodic Closing Lemma \cite{arnaud1998}]\label{ergodiclosing}
Consider a diffeomorphism $f$ preserving a smooth volume $m$. Then there is an
$f$-invariant set $\Sigma(f)$, the set of {\de well closable points}, such that:
\begin{enumerate}
\item $\mu(\Sigma(f) ) = 1$ for any invariant probability measure
$\mu$.
\item For every $x \in \Sigma (f)$ and $ \eps > 0$ there is a $C^1$-perturbation $g\in C^{1+\alpha}$ preserving $m$  such that
$x $ is a periodic point of $g$ and
 $d (f^i(x), g^i(x)) < \eps$ for all $i \in [0, \pi_g(x)],$ where $\pi_g(x)$ is the period of $x$ with respect to
 $g$.
\end{enumerate}
\end{theorem}

By Theorem \ref{ergodiclosing}, we have that $\mu^+_*$-a.e. $x$ is well
closable, and satisfies
$\lambda^+(x)=\int_M\lambda^+(x)d\mu^+_*(x)\leq0$, so there are $C^{1+\alpha}$
diffeomorphisms $g$ $C^1$-close to $f_3$ preserving $m$, and $g$-periodic points $p$ for which $\lambda^+_g(p)$ is close to be non-positive. If $\lambda^+_g(p)<0$, then $p$ is a hyperbolic periodic point of unstable index $u$. Otherwise,
$\lambda^+_g(p)$ is close to $0$. Let us recall the following:
\begin{proposition}[Conservative version of Franks
Lemma \cite{bdp2003}]\label{franks} Let $f$ be a diffeomorphism preserving a smooth measure $m$, $p$ be a
periodic point. Assume that $B$ is a conservative
$\eps$-perturbation of $Df$ along the orbit of $p$. Then for every neighborhood $V$
of the orbit of $p$ there is a $C^1$-perturbation $h\in C^{1+\alpha}$ preserving $m$ and coinciding with $f$ on the orbit of $p$ and out of $V$,
such that $Dh$ is equal to $B$ on the orbit of $p$.
\end{proposition}
If $\lambda^+_g(p)$ is close to $0$, then Proposition \ref{franks} allows us to find some
 $C^1$-perturbation which is a $C^{1+\alpha}$ diffeomorphism preserving $m$, for which the unstable index of $p$ is $u$. In
an analogous way, we find $h$ and $q$ such that $p$ and
$q$ are hyperbolic $h$-periodic points of unstable indices $u$ and $(u+2)$ respectively.\par
We are left to find a periodic point of index $(u+1)$.
Let $A=\{x:\lambda^+_h(x)>0\}$, then by assumption $m(A)>0$.
If there is $B\subset A$ with $m(B)>0$ such that $\lambda^-_h(x)>0$ on $B$, then by
Theorem \ref{BDP} we have that $h$ is stably ergodic. Otherwise, we have that $m$-a.e. $x$ in
$A$ satisfies $\lambda^-(x)\leq 0$. We can therefore choose an
ergodic component $\mu$ of $m_A$ such that
$\int_M\lambda^+(x)d\mu(x)>0$ and $\int_M\lambda^-(x)d\mu(x)\leq 0$. The
technique described above allows us to find a $C^{1+\alpha}$ volume preserving diffeomorphism, $C^1$-close to $f_3$ and $p_0,p_1,p_2$ such that $p_i$ are periodic points with unstable
indices $(u+i)$, for $i=0,1,2$. This finishes the proof of Lemma \ref{ergodicclosing}.\end{proof}
So far, we may assume that we have a $C^{1+\alpha}$ stably accessible partially hyperbolic diffeomorphism $f_4$, preserving $m$, $C^1$-close to $f$, such that $f_4$ satisfies (C.1), (C.2) and (C.3), and has three hyperbolic periodic points $p_0$, $p_1$ and $p_2$ of unstable indices $u$, $(u+1)$ and $(u+2)$ respectively.
Applying Theorem \ref{proposition.blender.conservative} we obtain a $C^1$-perturbation $f_5\in C^{1+\alpha}$ preserving $m$ and having a $cs$-blender ${\rm Bl}^{cs}(p_1)$ and a $cu$-blender ${\rm Bl}^{cu}(p_1)$ associated to the analytic continuation of $p_1$. This ends the proof of Proposition \ref{theorem.approximation}.
\subsection{Proof of Theorem \ref{teorema.establemente.ergodico}}\label{subsection.establemente.ergodico}
Let us define
$$ C^+ = \{ x \in M : \lambda^{+}(x) > 0\}, $$
$$ C^- = \{ x \in M : \lambda^{-}(x) < 0\}. $$
By hypothesis (C.3),  both $C^{+}$ and $C^{-}$ have positive Lebesgue measure.
Now, if $x$ is a regular point such that $x\notin C^+$, then $\lambda^+(x)\leq 0$. Since $\lambda^\pm(x)$ are the Lyapunov exponents associated to $E^\pm(x)$, and $E^c=E^-\oplus E^+$ is a dominated splitting, we have $\lambda^-(x)<0$. So, $x\in C^-$. Therefore $ C^+ \cup C^- \circeq M$.\par
Recall that, as we mentioned in Remark \ref{remark.accessibility.dense}, almost every orbit is dense, since we have (C.1). Therefore, we shall assume without loss of generality that all points in $C^+$ and $C^-$ are regular, and their orbits are dense.\par
Let us show that $C^+\subset\Lambda^u(p)$ and $C^-\subset \Lambda^s(p)$. Then, by hypothesis (C.3), we shall have that $m(\Lambda^u(p))>0$ and $m(\Lambda^s(p))>0$. The criterion established in Theorem \ref{main.theorem} then shows that $\Lambda^s(p)\circeq\Lambda^u(p)\circeq \Lambda$, and the previous comment shows that $f$ is ergodic. We shall afterwards explain how this argument is adapted to show that, in fact, $f$ is stably ergodic.\par
Let $x\in C^+$. Then the unstable Pesin manifold contains a $(u+1)$-disk $D$ tangent to $E^+\oplus E^u$. Since each orbit of $C^+$ is dense, there is an iterate $n>0$ such that $x_n=f^n(x)$ belongs to the $cu$-blender ${\rm Bl}^{cu}(p)$. The disk $D_n=f^n(D)$ is well placed, but it is not necessarily a $(u+1)$-strip, since its size could be not adequate yet. However, since there is uniform expansion along the strong unstable leaves $W^{uu}(x)$, there is an eventual iterate $k>0$ such that $x_k$ belongs to ${\rm Bl}^{cu}(p)$ {\em and} $D_k$ is a $(u+1)$-strip well placed in ${\rm Bl}^{cu}(p)$. Therefore, $D_k\pitchfork W^s(p)$, and hence $x_k\in\Lambda^u(p)$. Invariance implies $x\in\Lambda^u(p)$. In an analogous way, we show that $C^-\subset \Lambda^s(p)$.\par
Now, let $\eps>0$ be such that for all $g$ in a $C^1$-neighborhood of $f$, the $cu$-blender associated with the analytic continuation of $p$, ${\rm Bl}^{cu}(p_g)$, contains a ball of radius $\eps$. And consider any $g\in C^{1+\alpha}$ preserving $m$, so $C^1$-close to $f$ that $m$-almost every orbit of $g$ is $\eps/2$-dense (see Remarks \ref{remark.accessibility.dense} and \ref{remark.eps.accessibility}). We shall see that all such $g$ are ergodic. This proves that $f$ is stably ergodic.\par
Let us note that $g$ satisfies hypotheses (C.2) and (C.3). Indeed, (C.2) is an open property and, since $\int_M \lambda^+(x)dm(x)=\int_M\log Jac(Dg(x)|E^+_x)dm(x)$, the fact that (C.2) holds, together with the fact that $E^+_x$ varies continuously with respect to $g$, imply that (C.3) is open. Therefore, we have that $m(C^+_g)>0$ and $m(C^-_g)>0$ and by the same argument as above, $C^+_g\cup C^-_g\circeq M$.\par
To finish, observe that if $x\in C^+_g$, then the fact that the orbit of $x$ is $\eps/2$-dense implies that there are arbitrarily large iterates of $x$ belonging to the $cu$-blender ${\rm Bl}^{cu}(p_g)$. Proceeding as above, it is now easy to show that $x\in \Lambda^u(p)$. In analogous way it follows that $C^-\subset\Lambda^s(p)$ and hence, $g$ is ergodic.

\end{document}